\newcolumntype{C}[1]{>{\centering\let\newline\\\arraybackslash\hspace{0pt}}m{#1}}
\newcommand{\R}{\mathbb{R}}
\newcommand{\RR}{\mathcal{R}}
\newcommand{\Z}{\mathbb{Z}}
\newcommand{\CS}{\mathcal{S}}
\newcommand{\M}{\mathcal{M}}
\newcommand{\N}{\mathcal{N}}
\newcommand{\A}{\mathcal{A}}
\newcommand{\B}{\mathcal{B}}
\DeclareMathOperator{\Span}{span}
\DeclareMathOperator{\supp}{supp}
\newtheorem{thm}{Theorem}
\newtheorem{lemma}{Lemma}
\newtheorem{prop}{Proposition}
\newtheorem{definition}{Definition}
\newtheorem{remark}{Remark}
\title{Constructing curvelet-like bases and low-redundancy frames}
\author[1]{Wei Zhu\footnote{Email: zhu@math.duke.edu.}}
\author[1]{Ingrid Daubechies}
\affil[1]{Department of Mathematics, Duke University}
\date{}
\begin{document}

\maketitle

\begin{abstract}
  We provide a detailed analysis of the obstruction (studied first by S. Durand and later by R. Yin and one of us) in the construction of multidirectional wavelet orthonormal bases corresponding to any admissible frequency partition in the framework of subband filtering with non-uniform subsampling. To contextualize our analysis, we build, in particular, multidirectional alias-free hexagonal wavelet bases and low-redundancy frames with optimal spatial decay. In addition, we show that a 2D cutting lemma can be used to subdivide the obtained wavelet systems in higher frequency rings so as to generate bases or frames that satisfy the ``parabolic scaling'' law enjoyed by curvelets and shearlets. Numerical experiments on high bit-rate image compression are conducted to illustrate the potential of the proposed systems.
\end{abstract}

\section{Introduction}

Since its original development more than two decades ago, the wavelet transform has become a standard tool in signal and image processing, and it is especially renowned for being an integral part of the JPEG2000 image compression standard \cite{jpeg}. Despite the spatial-frequency localization inherited from 1D wavelets, the widely used 2D tensor wavelet systems suffer from poor orientation selectivity: only horizontal or vertical edges are well represented, which makes them non-optimal for images with discontinuities along regular curves.

Several approaches have been developed in order to achieve more efficient representation of directional structures, the most notable among which are curvelets \cite{candes2006fast,candes2004new} and shearlets \cite{easley2008sparse,guo2006sparse,kutyniok2014shearlab}. Curvelet and shearlet systems both follow a ``parabolic scaling'' law for their time-frequency decomposition, which has been shown to help them achieve optimal asymptotic rate of approximation for ``cartoon'' images, i.e., piecewise smooth functions with jumps occurring possibly along piecewise $C^2$-curves. However, all of the available discrete implementations of curvelets and shearlets suffer from being highly redundant (this is especially true for those implementations,  such as \cite{kutyniok2014shearlab}, that remain faithful to their continuous framework), which prevents them to deliver up to their theoretical potential for reasonable-size problems in practice. Another popular scheme with similar time-frequency decomposition is the contourlet transform \cite{do2005contourlet}, which, unlike curvelets and shearlets, is defined directly on a discrete lattice. Contourlets are still redundant frames (with a redundancy factor 4/3) because they are based on the (naturally redundant) Laplacian pyramid scheme; in addition, they are also not well-localized in the frequency domain \cite{lu2006new}. Lu and Do proposed in \cite{lu2003crisp} a contourlet transform with critical sample rate, but because their construction does not correspond to a multiresolution approximation (MRA) of $L^2(\R^2)$, it
is not clear whether it corresponds to a basis for $L^2(\R^2)$.

Most of the non-redundant transforms rely on a tree of critically sampled directional filter banks \cite{durand2005orthonormal,durand2007,GUO2006202,nguyen2005multiresolution,yin-wavelet-1, yin-wavelet-2}. The authors in \cite{nguyen2005multiresolution} introduced the framework of {\em non-uniform directional filter banks} (nuDFB) leading to an MRA of $L^2(\R^2)$; their filters are obtained by solving a non-convex optimization problem that provides non-unique near-orthogonal or biorthogonal solutions without guarantee of convergence. It was later shown by Durand in \cite{durand2007} that it is impossible in the framework of nuDFB to construct regularized  directional wavelet bases without aliasing. In \cite{yin-wavelet-1, yin-wavelet-2}, Yin and Daubechies made a detailed study  of these obstructions in the case of dyadic quincunx subsampling corresponding to the frequency partition studied in \cite{nguyen2005multiresolution} (see Figure~\ref{fig:rec_p1}), and constructed orthonormal/biorthogonal bases that, within the constraints they impose, give better smoothness.

Building on the idea of \cite{durand2007,yin-wavelet-1}, we provide, in this paper, a detailed analysis and explicit construction of optimal directional wavelet bases and low-redundancy frames for any admissible frequency partition (to be explained in Section~\ref{sec:admissibility}) within the framework of nuDFB. To contextualize the idea, we build, in particular, alias-free hexagonal wavelet bases with optimal continuity in the frequency domain and  low-redundancy (factor 2) frames with arbitrarily fast spatial decay. We also explain the usage of a 2D cutting lemma (similar to the composition of 2-band filters used in \cite{durand2007}) to subdivide the obtained bases and frames in higher frequency rings so as to satisfy the ``parabolic scaling'' law typical for curvelet or shearlet bases. 

The paper is organized as follows. In Section~\ref{sec:subband_mra}, we provide a brief review of subband filtering with non-uniform sampling and multiresolution approximation of $L^2(\R^d)$. Section~\ref{sec:adm_perm} explains the admissibility and permissibility of the frequency domain partition. We study the explicit construction of alias-free hexagonal wavelet bases and low-redundancy frames with optimal spatial decay in Section~\ref{sec:basis_and_frame}. In Section~\ref{sec:cutting}, a 2D cutting lemma is used to subdivide the obtained hexagonal wavelet systems to satisfy the ``parabolic scaling'' law. Numerical experiments on high bit rate image compression using the proposed hexagonal wavelets are shown in Section~\ref{sec:ex}. Finally, we present some
conclusions in Section~\ref{sec:conclusion}.

\section{Subband filtering and multiresolution approximation}
\label{sec:subband_mra}

In this section, we provide a brief review of lattices in $\R^d$, subband filtering with non-uniform sampling, and the multiresolution approximation (MRA) of $L^2(\R^d)$.

\subsection{Lattices and subband filtering}
A discrete subset $\Lambda\subset \R^d$ is called a (full-rank) {\em lattice} in $\R^d$ if there exists a basis $\left(e_k\right)_{1\le k \le d}$ of $\R^d$, such that
\begin{align*}
  \Lambda = [e_1, \cdots, e_d]~\Z^d = 
  \left\{\,\sum_{k=1}^d z_k~e_k \,;\,z_1, \ldots,\,z_d \in \Z \right\}\,;
\end{align*}
$\Lambda$ can be viewed as
an additive subgroup of $\R^d$,  and $\Span(\Lambda) = \R^d$. 
Define the determinant of $\Lambda$ as $\det(\Lambda) \coloneqq \sqrt{\det(E^TE)}$, where $E = [e_1, \cdots, e_d]\in\R^{d\times d}$. Given a {\em sublattice} $\Gamma\subset\Lambda$ (i.e. $\Gamma$ is again
a full-rank lattice in $\R^d$, and included in $\Lambda$ 
as a set), the quotient $\Lambda/\Gamma$ is defined as the collection of cosets $\gamma+\Gamma$ in $\Lambda$:
\begin{align*}
  \Lambda/\Gamma \coloneqq \left\{[\gamma] = \gamma+\Gamma; \gamma \in \Lambda \right\}.
\end{align*}
The quotient $\Lambda/\Gamma$ is a finite set with cardinality $|\Lambda/\Gamma| = \det(\Gamma)/\det(\Lambda)$. Depending on the context, we sometimes abuse the notation and use $\gamma$ to denote its coset $\gamma+\Gamma$.
%
%
The {\em reciprocal lattice} of $\Lambda\subset \R^d$ is defined by $\Lambda^* \coloneqq \left\{ \gamma \in \R^d: \left<\eta, \gamma\right> \in 2\pi\Z, ~\forall \eta \in \Lambda \right\}$. Equivalently, let $\left(\widetilde{e}_k\right)_{1\le k\le d}$ be the {\em biorthogonal sequence} of $\left\{e_k\right\}_{k=1}^d$, i.e., $\left<e_i,\widetilde{e}_j\right> = \delta_{i,j}$, then
\begin{align*}
  \Lambda^* = 2\pi[\widehat{e}_1, \cdots, \widetilde{e}_d]~\Z^d.
\end{align*}
A subset $S\subset\R^d$  is called a {\em reciprocal cell} of $\Lambda$ if $\{S+{\gamma}\}_{\gamma\in\Lambda^*}$ is a partition of $\R^d$ (modulo a set of zero Lebesgue measure.) It is called the {\em Voronoi reciprocal cell} if $S$ satisfies in addition that
\begin{align*}
  \xi\in S\implies \|\xi\|\le\|\xi+\gamma\|, \quad \forall \gamma \in \Lambda^*.  
\end{align*}
Note that for any reciprocal cell of $\Lambda$, its Lebesgue measure $|S|$ is given by 
\begin{align*}
    |S|=det\left(\Lambda^*\right)= (2 \pi )^d \left[det(\Lambda) \right]^{-1}\,.
\end{align*}
If $S^*$ is a reciprocal cell for $\Lambda^*$, then a function $f \in L^2(S)$ can be extended ``by periodicity under $\Lambda^*$'' to all of $\R^d$ by setting $f(\xi+\gamma)=f(\xi)$ if $\xi \in S$ and $\gamma \in \Lambda^*$. The original function $f$ and its 
periodic extension can both be represented by a Fourier series,
\begin{align*}
     f(\xi)=\sum_{\lambda \in \Lambda} c_{\lambda}\, 
     e^{i \lambda \xi} \,,
\end{align*}
where the sequence $\left(c_{\lambda} \right)_{\lambda \in \Lambda}$ is square-summable. If one multiplies such a function $f$ with a (bounded) $\Lambda^*$-periodic function $M$, then the outcome $g=Mf$ is again 
$\Lambda^*$-periodic, and its Fourier series is given by
\begin{align*}
    g(\xi)=\sum_{\lambda \in \Lambda} d_{\lambda}\, 
     e^{i \lambda \xi} \,,\, \mbox{ where }
     d_{\lambda}=\sum_{\mu \in \Lambda} m_{\mu} 
     c_{\lambda -\mu}\, \mbox{ and } \left(m_{\lambda}\right)_{\lambda \in \Lambda} 
     \mbox{ is the sequence of Fourier coefficients of } M\,.
\end{align*}
The operation of multiplying a $\Lambda^*$-periodic function with $M$, or equivalently, convolving a 
$\Lambda$-indexed sequence with $\left(m_{\lambda}\right)_{\lambda \in \Lambda} $, is 
typically called {\em filtering}; the function $M$ is then called the {\em transfer function} for this filtering 
operation.\\
Given a collection of sublattices $\left\{\Gamma_k\right\}_{k=1}^K$ of $\Lambda$, 
we define a {\em filter bank} $\left(M_k,\widetilde{M_k}, \Gamma_k\right)_{k=1}^K$ 
by the 
choice of $K$ transfer functions $(M_k,\widetilde{M_k})_{k=1}^K \in L^2(S^*)\simeq L^2(\R^d/\Lambda^*)$.
The corresponding {\em non-uniform subsampled filtering} acts on a sequence 
$x\in l^2(\Lambda)$ 
in two steps: first it is filtered on $K$ subbands with the transfer functions $\overline{M_k}$; then each of
these filter-output sequences is subsampled: for
the $k$-th subband, only the entries of the sequence
with indices in $\Gamma_k$ are retained, 
to obtain $y_k\in l^2(\Gamma_k)$. This is the {\em 
decomposition} or {\em analysis} step. At reconstruction, each $y_k$ is upsampled on $\Lambda$ by padding zeros (i.e. for each k an auxiliary sequence 
$z_k$ is defined for which
$\left(z_k \right)_{\lambda}$ equals 
$\left(y_k \right)_{\lambda}$ if $\lambda \in \Gamma_k$,
and 0 otherwise) and then filtered with the transfer functions $\widetilde{M_k}$. The reconstructed signal $\widetilde{x}$ is defined as the sum of these $K$ upsampled and filtered signals. The filter bank $(M_k,\widetilde{M_k},\Gamma_k)_{k=1}^K$ is said to perform a perfect reconstruction (PR) if $\widetilde{x} = x$. The following theorem by Durand \cite{durand2007} provides a necessary and sufficient condition for a PR filter bank.

\begin{thm}
  \label{thm:PR}
  A filter bank $(M_k, \widetilde{M}_k, \Gamma_k)_{k=1}^K$  performs a perfect reconstruction if and only if
  \begin{align}
    \label{eq:PR}
    M(\xi)^* \widetilde{M}(\xi) = |\Lambda/\Gamma|Id_{|\Lambda/\Gamma|}, \quad a.e.~\xi \in \R^d,
  \end{align}
where $\Gamma$ is a common sublattice of all $\Gamma_k$, $M(\xi)^*$ is the conjugate transpose of $M(\xi)$, and the matrices $M(\xi), \widetilde{M}(\xi)$ are defined as
\begin{align}
  \label{eq:m}
  M(\xi) &= \left( e^{i\left<\eta_k,\xi+\gamma\right>}M_k(\xi+\gamma) \right)_{k\in[K]=\{1,\cdots,K\}, ~\eta_k \in \Gamma_k/\Gamma; ~\gamma\in \Gamma^*/\Lambda^*},\\
  \label{eq:m-tilde}
 \widetilde{M}(\xi) &= \left( e^{i\left<\eta_k,\xi+\gamma\right>}\widetilde{M}_k(\xi+\gamma) \right)_{k\in[K]=\{1,\cdots, K\}, ~\eta_k \in \Gamma_k/\Gamma; ~\gamma\in \Gamma^*/\Lambda^*}.
\end{align}
\end{thm}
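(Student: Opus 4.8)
The plan is to pass to the frequency (modulation) domain and reduce the operator identity $\widetilde{x}=x$ to the pointwise matrix identity \eqref{eq:PR}. Since the map $x\mapsto\hat x(\xi)=\sum_{\lambda\in\Lambda}x_\lambda e^{i\langle\lambda,\xi\rangle}$ is, by Parseval, an isometry from $\ell^2(\Lambda)$ onto $L^2(\R^d/\Lambda^*)$, perfect reconstruction holds iff $\hat{\widetilde x}=\hat x$ for every input. The whole analysis--synthesis cascade is linear and covariant under translation by $\Gamma$, so I expect it to diagonalize into independent $N\times N$ blocks indexed by $\xi$, where $N=|\Lambda/\Gamma|=|\Gamma^*/\Lambda^*|$; the content of the theorem is the identification of each block with $M(\xi)^*\widetilde M(\xi)$.

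First I would set up the polyphase/modulation bookkeeping relative to the common sublattice $\Gamma$. Fixing coset representatives, I would record the two dual facts that drive everything: the decimation (aliasing) formula, which for $w\in\ell^2(\Lambda)$ restricted to $\Gamma$ reads $\widehat{w|_\Gamma}(\xi)=\frac1N\sum_{\gamma\in\Gamma^*/\Lambda^*}\hat w(\xi+\gamma)$, and the modulation--polyphase inversion $\hat w^{(\eta)}(\xi)=\frac1N\sum_{\gamma\in\Gamma^*/\Lambda^*}e^{-i\langle\eta,\xi+\gamma\rangle}\hat w(\xi+\gamma)$, both of which follow from the orthogonality of the characters $\gamma\mapsto e^{i\langle\lambda,\gamma\rangle}$ on the finite group $\Gamma^*/\Lambda^*$ (the sum is $N$ when $\lambda\in\Gamma$ and $0$ otherwise). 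The key organizational point is to decompose each subsampled sequence $y_k\in\ell^2(\Gamma_k)$ into its $|\Gamma_k/\Gamma|$ polyphase components over the single common lattice $\Gamma$; this is what lets the $K$ subbands, despite having different decimation lattices $\Gamma_k$, be assembled into one matrix whose rows are exactly indexed by the pairs $(k,\eta_k)$ with $\eta_k\in\Gamma_k/\Gamma$, matching \eqref{eq:m}.

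Next I would compute the two halves explicitly. Writing the modulation vector $\hat X(\xi)=(\hat x(\xi+\gamma))_{\gamma\in\Gamma^*/\Lambda^*}\in\mathbb{C}^N$, a direct substitution of $\hat w_k=\overline{M_k}\,\hat x$ into the inversion formula shows that the vector of analysis coefficients $Y(\xi)=\big(\hat y_k^{(\eta_k)}(\xi)\big)_{(k,\eta_k)}$ equals $\frac1N\overline{M(\xi)}\,\hat X(\xi)$, the bar denoting entrywise conjugation of the matrix in \eqref{eq:m}. On the synthesis side, reassembling $\hat y_k(\xi)=\sum_{\eta_k}e^{i\langle\eta_k,\xi\rangle}\hat y_k^{(\eta_k)}(\xi)$, multiplying by $\widetilde M_k$, summing over $k$, and reading off the components of $\hat{\widetilde X}(\xi)=(\hat{\widetilde x}(\xi+\gamma))_\gamma$ (using that the polyphase components are $\Gamma^*$-periodic) yields $\hat{\widetilde X}(\xi)=\widetilde M(\xi)^{T}\,Y(\xi)$. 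Composing the two gives $\hat{\widetilde X}(\xi)=\frac1N\,\widetilde M(\xi)^{T}\,\overline{M(\xi)}\,\hat X(\xi)=\frac1N\big(M(\xi)^*\widetilde M(\xi)\big)^{T}\hat X(\xi)$.

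Finally I would close the equivalence. Perfect reconstruction, i.e. $\hat{\widetilde X}(\xi)=\hat X(\xi)$ for all inputs, is equivalent to $\frac1N(M(\xi)^*\widetilde M(\xi))^{T}=\mathrm{Id}_N$ for a.e. $\xi$, hence to \eqref{eq:PR} after transposing. The ($\Leftarrow$) direction is immediate; for ($\Rightarrow$) I would use that the vectors $\hat X(\xi)$ sweep out all of $\mathbb{C}^N$ --- because the shifts $\{\xi+\gamma\}_{\gamma\in\Gamma^*/\Lambda^*}$ are distinct modulo $\Lambda^*$, one can choose $\hat x$ making $\hat X(\xi)$ equal to any prescribed vector on a set of positive measure --- together with a standard measure-theoretic argument to upgrade ``for all inputs'' to ``for a.e. $\xi$''. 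The main obstacle I anticipate is not any single estimate but the bookkeeping in the second and third paragraphs: keeping the dual indexings $\Lambda/\Gamma$ and $\Gamma^*/\Lambda^*$ straight, and verifying that the polyphase decomposition over the common refinement $\Gamma$ reproduces exactly the entries \eqref{eq:m}--\eqref{eq:m-tilde}, including the placement of conjugates and transposes so that the composed operator lands on $(M^*\widetilde M)^{T}$ rather than some other arrangement.
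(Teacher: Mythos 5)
The paper does not actually prove Theorem~\ref{thm:PR}: it is quoted from Durand's work \cite{durand2007} without proof, so there is no internal argument to compare against. Your proposal is the standard modulation-domain (polyphase) proof and, as far as I can check, it is correct: the decimation and polyphase-inversion identities follow from character orthogonality on $\Gamma^*/\Lambda^*$ exactly as you state; the analysis map does come out as $Y(\xi)=\tfrac1N\overline{M(\xi)}\,\hat X(\xi)$ because the $(k,\eta_k)$-row of $\overline{M(\xi)}$ has entries $e^{-i\langle\eta_k,\xi+\gamma\rangle}\overline{M_k(\xi+\gamma)}$; the synthesis map is $\widetilde M(\xi)^{T}$ once you use the $\Gamma^*$-periodicity of the polyphase components; and $\widetilde M^{T}\overline{M}=(M^*\widetilde M)^{T}$, so the normalization $|\Lambda/\Gamma|$ in \eqref{eq:PR} matches the $\tfrac1N$ from the inversion formula. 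Your key organizational move --- decomposing each $y_k\in\ell^2(\Gamma_k)$ into its $|\Gamma_k/\Gamma|$ polyphase components over the single common lattice $\Gamma$, so that the row index set becomes $\{(k,\eta_k):\eta_k\in\Gamma_k/\Gamma\}$ --- is precisely what makes the non-uniform sampling tractable and reproduces \eqref{eq:m}--\eqref{eq:m-tilde}. The two points you leave as sketches are genuinely the only ones needing care, and both close by standard means: for the converse, test against the $N$ delta sequences supported at coset representatives $\lambda_j$ of $\Lambda/\Gamma$, whose modulation vectors $\bigl(e^{i\langle\lambda_j,\xi+\gamma\rangle}\bigr)_{\gamma}$ form an invertible matrix at every $\xi$; this simultaneously supplies the spanning argument and reduces the quantifier exchange to intersecting finitely many full-measure sets. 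No gap.
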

 
A filter bank $(M_k, \widetilde{M}_k, \Gamma_k)_{k=1}^K$ is said to be critically sampled if
\begin{align*}
  \sum_{k=1}^K|\Lambda/\Gamma_k|^{-1} = \sum_{k=1}^K|\Gamma_k/\Gamma|/|\Lambda/\Gamma| = 1.
\end{align*}
The matrices $M(\xi)$ and $\widetilde{M}(\xi)$ in Theorem~\ref{thm:PR} are square matrices if $(M_k, \widetilde{M}_k, \Lambda \to \Gamma_k)_{k=1}^K$ is critically sampled. This leads to the following proposition, the proof of which is detailed in Appendix~\ref{apdx:proof_crit_PR}.

\begin{prop}
  \label{prop:crit_PR}
  Suppose $(M_k, \widetilde{M}_k, \Lambda \to \Gamma_k)_{k=1}^K$ is a critically sampled PR filter bank, then
  \begin{align}
    \label{eq:mirror}
    \sum_{t \in \Gamma_k^*/\Lambda^*}\widetilde{M}_k(\xi+t)\overline{M_k}(\xi+t) = |\Lambda/\Gamma_k|, \quad a.e.~\xi \in \R^d, ~\forall k \in [K]
  \end{align}
\end{prop}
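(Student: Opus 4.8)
The plan is to read the mirror condition \eqref{eq:mirror} off the perfect-reconstruction identity of Theorem~\ref{thm:PR}, exploiting the fact that critical sampling makes $M(\xi)$ and $\widetilde{M}(\xi)$ square. Since \eqref{eq:PR} asserts $M(\xi)^*\widetilde{M}(\xi)=|\Lambda/\Gamma|\,Id$ with both factors square and $|\Lambda/\Gamma|\neq 0$, the factor $M(\xi)^*$ is invertible with inverse $|\Lambda/\Gamma|^{-1}\widetilde{M}(\xi)$, and multiplying on the other side yields the companion identity $\widetilde{M}(\xi)M(\xi)^*=|\Lambda/\Gamma|\,Id$ for a.e.\ $\xi$. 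The reason to prefer this second ordering is that its entries are indexed by pairs $((k,\eta_k),(k',\eta'_{k'}))$, and each entry involves only the single pair of transfer functions $\widetilde{M}_k$ and $M_{k'}$; restricting to the diagonal block $k=k'$ therefore decouples the subbands and isolates exactly the product $\widetilde{M}_k\,\overline{M_k}$ appearing in \eqref{eq:mirror}.

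Carrying this out, I would compute the $((k,\eta_k),(k,\eta'_k))$ entry of $\widetilde{M}(\xi)M(\xi)^*$ from \eqref{eq:m}--\eqref{eq:m-tilde}; the phase factors combine into $e^{i\langle \eta_k-\eta'_k,\,\xi+\gamma\rangle}$, so, writing $\mu:=\eta_k-\eta'_k\in\Gamma_k/\Gamma$ and cancelling the nonvanishing $\xi$-dependent phase $e^{i\langle\mu,\xi\rangle}$, the identity $\widetilde{M}M^*=|\Lambda/\Gamma|\,Id$ reads, for every $\mu\in\Gamma_k/\Gamma$,
\begin{align*}
  \sum_{\gamma\in\Gamma^*/\Lambda^*} e^{i\langle \mu,\gamma\rangle}\,\widetilde{M}_k(\xi+\gamma)\overline{M_k(\xi+\gamma)} = |\Lambda/\Gamma|\,\delta_{\mu,0},
\end{align*}
since as $\eta_k,\eta'_k$ range over $\Gamma_k/\Gamma$ their difference $\mu$ ranges over all of $\Gamma_k/\Gamma$. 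The next step is to regroup the sum over $\gamma\in\Gamma^*/\Lambda^*$ according to cosets of the intermediate subgroup $\Gamma_k^*/\Lambda^*$, which is legitimate because $\Lambda^*\subset\Gamma_k^*\subset\Gamma^*$: writing $\gamma=s+t$ with $s\in\Gamma^*/\Gamma_k^*$ and $t\in\Gamma_k^*/\Lambda^*$, and using $e^{i\langle\mu,t\rangle}=1$ for $\mu\in\Gamma_k$ and $t\in\Gamma_k^*$, the display becomes $\sum_{s\in\Gamma^*/\Gamma_k^*} e^{i\langle\mu,s\rangle}\,H_k(\xi,s)=|\Lambda/\Gamma|\,\delta_{\mu,0}$, where $H_k(\xi,s):=\sum_{t\in\Gamma_k^*/\Lambda^*}\widetilde{M}_k(\xi+s+t)\overline{M_k(\xi+s+t)}$ is $\Gamma_k^*$-periodic in $s$ and hence a well-defined function on $\Gamma^*/\Gamma_k^*$.

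To finish, I would recognize this system as a finite Fourier inversion on the abelian group $\Gamma^*/\Gamma_k^*$: the characters $s\mapsto e^{i\langle\mu,s\rangle}$ with $\mu\in\Gamma_k/\Gamma$ are precisely all characters of $\Gamma^*/\Gamma_k^*$ (the assignment $\mu\mapsto e^{i\langle\mu,\cdot\rangle}$ is injective on $\Gamma_k/\Gamma$, since $e^{i\langle\mu,s\rangle}=1$ for all $s\in\Gamma^*$ forces $\mu\in(\Gamma^*)^*=\Gamma$, and a count gives $|\Gamma_k/\Gamma|=|\Gamma^*/\Gamma_k^*|$ on both sides). The equations say that the Fourier transform of $s\mapsto H_k(\xi,s)$ is supported at $\mu=0$ with value $|\Lambda/\Gamma|$, so $H_k(\xi,s)$ is constant in $s$, equal to $|\Lambda/\Gamma|/|\Gamma^*/\Gamma_k^*|=|\Lambda/\Gamma|/|\Gamma_k/\Gamma|=|\Lambda/\Gamma_k|$; evaluating at $s=0$ gives exactly \eqref{eq:mirror}. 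I expect the one genuinely delicate point to be the bookkeeping of the finite abelian groups — verifying that the characters indexed by $\Gamma_k/\Gamma$ exhaust the dual of $\Gamma^*/\Gamma_k^*$ and that the coset decomposition of $\Gamma^*/\Lambda^*$ through $\Gamma_k^*/\Lambda^*$ is compatible with the phase cancellations — rather than any analytic estimate; once this group-theoretic setup is in place, the conclusion follows immediately from orthogonality of characters.
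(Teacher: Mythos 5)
Your proposal is correct and follows essentially the same route as the paper's proof in Appendix~\ref{apdx:proof_crit_PR}: pass from $M(\xi)^*\widetilde{M}(\xi)=|\Lambda/\Gamma|\,Id$ to $\widetilde{M}(\xi)M(\xi)^*=|\Lambda/\Gamma|\,Id$ using squareness from critical sampling, read off the diagonal-block entries, decompose $\Gamma^*/\Lambda^*$ through $\Gamma_k^*/\Lambda^*$, and invoke orthogonality of characters of $\Gamma^*/\Gamma_k^*$ against $\Gamma_k/\Gamma$. Your final step (Fourier inversion showing $H_k(\xi,\cdot)$ is constant) and the paper's (summing over $\eta\in\Gamma_k/\Gamma$ to isolate $g(\xi,0)$) are the same character-orthogonality computation phrased two ways.
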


\subsection{Multiresolution approximation of $L^2(\R^d)$}

Let $\left\{\Lambda^{(j)}\right\}_{j\in\Z}$ be a sequence of lattices in $\R^d$ with the subsampling map $D\in\R^{d\times d}$, i.e.,
\begin{align}
  \label{eq:def_lambda}
  \cdots \Lambda^{(-2)}\supset \Lambda^{(-1)} \supset \Lambda^{(0)} \supset \Lambda^{(1)} \supset \Lambda^{(2)} \cdots, \quad \Lambda^{(j)}  = D^j\Lambda^{(0)}, ~\forall j\in \Z.
\end{align}
An MRA of $L^2(\R^d)$  is defined similarly as that of $L^2(\R)$ in \cite{mallat1989multiresolution}:
\begin{definition}
\label{def:mra}
  A sequence $\left\{V_j\right\}_{j\in\Z}$ of closed subspaces of $L^2(\R^d)$ is an MRA of $L^2(\R^d)$ associated with $\left\{\Lambda^{(j)} = D^j\Lambda^{(0)}\right\}_{j\in\Z}$ if
  \begin{itemize}
  \item $f\in V_j \iff f(\cdot -n)\in V_j, ~\forall j\in\Z, \forall n\in\Lambda^{(j)}$.
  \item $V_{j+1}\subset V_j, ~\forall j \in \Z$.
  \item $f\in V_0 \iff f(D^{-j}\cdot)\in V_j, ~\forall j\in\Z$.
  \item $\lim_{j\to +\infty}V_j = \bigcap_{j\in\Z}V_j = \left\{0\right\}$.
  \item $\lim_{j\to -\infty}V_j = \overline{\bigcup_{j\in\Z}V_j} = L^2(\R^d)$.
  \item There exists a scaling function $\phi\in L^2(\R^d)$ such that $\left\{\phi(\cdot - n)\right\}_{n\in\Lambda^{(0)}}$ constitutes an orthonormal basis of $V_0$.
  \end{itemize}
\end{definition}

Define the scaled and shifted versions of the scaling function
\begin{align}
  \label{eq:scaling_shift}
  \phi_{j}(x) = D^{-j/2}\phi(D^{-j}x),~~\text{and}~~ \phi_{j,n}(x) = \phi_j(x-n), \quad \forall j\in \Z, ~\forall n\in\Lambda^{(j)}.
\end{align}
The scaling relation in Definition~\ref{def:mra} implies that $\left\{\phi_{j,n}\right\}_{n\in\Lambda^{(j)}}$ is an orthonormal basis of $V_j$, and there exists $h_0\in l^2(\Lambda^{(0)})$ such that
\begin{align*}
  \phi_{1}(x) = |D|^{-1/2}\phi(D^{-1}x) = \sum_{n\in\Lambda^{(0)}}h_0[n]\phi(x-n)\,,
  \mbox{ where } |D|=|\det(D)|\,.
\end{align*}
In the frequency domain, this is equivalent to
\begin{align*}
  \widehat{\phi}(\xi) = |D|^{-1/2}\widehat{\phi}(D^{-T}\xi)M_0(D^{-T}\xi),
\end{align*}
where $M_0 = \widehat{h_0}\in L^2(\R^d/\Lambda^{(0)*})$ is the Fourier transform of $h_0$. Hence we have
\begin{align}
  \label{eq:def_phi}
  \widehat{\phi}(\xi) = \prod_{p=1}^\infty |D|^{-1/2}M_0\left( (D^{-T})^{p}\xi \right).
\end{align}

Let $W_j$ be the orthogonal complement of $V_j$ in $V_{j-1}$, i.e., $V_{j-1} = W_j ~\obot V_j$. In order to obtain an orthonormal basis of $L^2(\R^d)$, one needs to find orthonormal bases of $W_j$; because of the inclusion
relation among the $V_j$, and the complementarity, within $V_{j-1}$, of each 
$W_j$ to $V_j$, the union of the $W_j$-bases would indeed provide an orthonormal basis of $L^2(\R^d)$. Moreover, because the different layers of
the MRA are obtained by simple scaling, we really need to
find only an orthonormal basis 
$\left(\psi^k\right)_{k \in [K]}$
for $W_0$, where (as before), we have used the notation
$[K]:=\{1,2,\ldots,K \}$. 
If we define the $K$ sublattices
 $\left\{\Gamma_k^{(0)} \right\}_{k\in [K]}$  of $\Lambda^{(-1)}$, and set
\begin{align}
  \label{eq:def_gamma}
  \Gamma_k^{(j)} \coloneqq D^j\Gamma_k^{(0)}\subset \Lambda^{(j-1)}\,,
\end{align}
then we just need to find $K$ ``mother wavelet functions'' $\psi^k\in L^2(\R^d), k\in [K]$, such that
\begin{align}
  \label{eq:w_onb}
  W_j = \bigobot_{k=1}^K W_j^k =\bigobot_{k=1}^K  \overline{\Span}\left\{ \psi_{j,n}^k \right\}_{n\in \Gamma_k^{(j)}},
\end{align}
where
\begin{align}
  \label{eq:def_psi}
  \left\{
  \begin{aligned}
    &\widehat{\psi^k}(\xi) = |D|^{-1/2}\hat{\phi}(D^{-T}\xi)M_k(D^{-T}\xi),\\
    &\psi_{j}^k(x) = D^{-j/2}\psi^k(D^{-j}x),\quad \quad &&\forall j\in \Z,\\
    &\psi_{j,n}^k(x) = \psi_j^k(x-n), \quad \quad &&\forall n\in\Gamma_k^{(j)},
  \end{aligned}\right.
\end{align}
and $\left\{\psi_{j,n}^k\right\}_{n\in\Gamma_k^{(j)}}$ is an orthonormal basis of $W_j^k$. Figure~\ref{fig:ladder} provides a visual illustration of the inclusion relations among the lattices $\left\{ \Gamma_k^{(j)} \right\}_{j\in\Z,~ 0\le k\le K}$, where we identify $\Lambda^{(j)}$ as $\Gamma_0^{(j)}$.

\begin{figure}
  \centering
  \includegraphics[width=.83\textwidth]{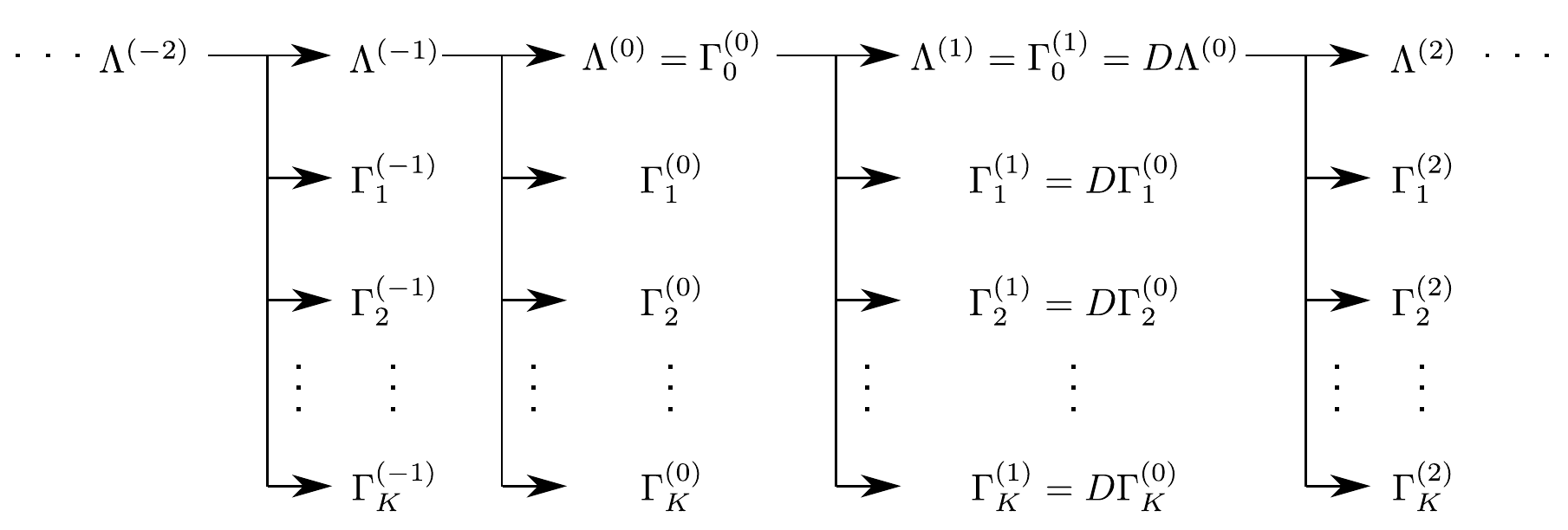}
  \caption{ \small The inclusion relations among the lattices $\left\{ \Gamma_k^{(j)} \right\}_{j\in\Z, ~ 0 \le k\le K}$, where $\Lambda^{(j)}$ is identified with $\Gamma_0^{(j)}$.}
  \label{fig:ladder}
\end{figure}

The following theorem relates the MRA of $L^2(\R^d)$ to the 1-level PR condition for the filter bank $(M_k, M_k, \Gamma_k^{(1)})_{k=0}^K$; this results is a $d$-dimensional generalization of the classical 1D results in \cite{cohen1992biorthogonal, daubechies1992ten, mallat1989multiresolution}.

\begin{thm}
  \label{thm:MRA-filter}
  Suppose $\left\{\Lambda^{(j)} = \Gamma_0^{(j)}\right\}_{j\in\Z}$ and $\left\{\Gamma_k^{(j)}\right\}_{j\in\Z, ~k\in [K]}$ are a collection of lattices as defined in \eqref{eq:def_lambda} and \eqref{eq:def_gamma}, and $(M_k, M_k, \Gamma_k^{(1)})_{k=0}^K$ is a PR filter bank with $M_0(0)= |D|^{1/2}$. Define $\phi$ and $\left\{\psi^k\right\}_{k=1}^K$ in the Fourier domain as in \eqref{eq:def_phi} and \eqref{eq:def_psi}. Then $\left\{\psi_{j,n}^k \right\}_{k\in [K],~j\in\Z, ~n\in \Gamma_k^{(j)}}$ constitutes a Parseval frame of $L^2(\R^d)$. In particular, for any $f\in L^2(\R^d)$, we have
  \begin{align}
    \label{eq:parseval}
    f = \sum_{k=1}^K\sum_{j\in\Z}\sum_{n\in\Gamma_k^{(j)}}\left<f,\psi^k_{j,n}\right>\psi^k_{j,n}.
  \end{align}
Moreover, if $(M_k, M_k, \Lambda^{(0)} \to \Gamma_k^{(1)})_{k=0}^K$ is critically sampled (and thus \eqref{eq:mirror} holds true for $k=0$  by Proposition~{\rm{\ref{prop:crit_PR}}}), and if there exists a compact reciprocal cell $S$ of $\Lambda^{(0)}$ containing a neighborhood of the origin that satisfies
\begin{align*}
  \inf_{p>0, \xi\in S}\left|M_0((D^{-T})^p\xi)\right| > 0,
\end{align*}
then $\left\{\psi_{j,n}^k \right\}_{k\in [K],~j\in\Z, ~n\in \Gamma_k^{(j)}}$ is an orthonormal basis of $L^2(\R^d)$.
\end{thm}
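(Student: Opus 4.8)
The plan is to adapt the classical Mallat and Cohen--Daubechies--Feauveau arguments to the non-uniform, $d$-dimensional subsampling, treating the two claims separately. For the Parseval-frame statement I will not invoke the MRA axioms but work directly from the filters. First I would check that the infinite product \eqref{eq:def_phi} is legitimate: the normalization $M_0(0)=|D|^{1/2}$ makes the generic factor $|D|^{-1/2}M_0((D^{-T})^p\xi)$ tend to $1$ as $p\to\infty$, which (with the regularity of $M_0$ near the origin) yields locally uniform convergence of the product, continuity of $\widehat\phi$, and $\widehat\phi(0)=1$. The two-scale relations $\widehat\phi(\xi)=|D|^{-1/2}\widehat\phi(D^{-T}\xi)M_0(D^{-T}\xi)$ and $\widehat{\psi^k}(\xi)=|D|^{-1/2}\widehat\phi(D^{-T}\xi)M_k(D^{-T}\xi)$ from \eqref{eq:def_psi} then hold by construction.

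Introduce the frame operators $\Phi_j f=\sum_{n\in\Lambda^{(j)}}\langle f,\phi_{j,n}\rangle\phi_{j,n}$ and $\Psi^k_j f=\sum_{n\in\Gamma_k^{(j)}}\langle f,\psi^k_{j,n}\rangle\psi^k_{j,n}$. The core of the argument is the single-scale identity
\begin{align*}
  \langle \Phi_{j-1}f,f\rangle=\langle\Phi_jf,f\rangle+\sum_{k=1}^K\langle\Psi^k_jf,f\rangle,\qquad\forall f\in L^2(\R^d),~\forall j\in\Z.
\end{align*}
By the dilation $f\mapsto f(D^j\cdot)$ it suffices to prove the case $j=1$. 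I would write each term in the frequency domain via Plancherel and periodization over the relevant reciprocal lattices, substitute the two-scale relations, and reduce the whole equality to the pointwise matrix equation \eqref{eq:PR} with $\widetilde M_k=M_k$, namely $M(\xi)^*M(\xi)=|\Lambda/\Gamma|\,\mathrm{Id}$ from Theorem~\ref{thm:PR}. The content of perfect reconstruction is exactly that the rescaled columns of $M(\xi)$ in \eqref{eq:m} are orthonormal, and this is the isometry that turns the one-level analysis/synthesis into a norm-preserving split.

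Summing the single-scale identity over $j$ telescopes the $\Phi$-terms and leaves $\langle\Phi_{j_-}f,f\rangle-\langle\Phi_{j_+}f,f\rangle$ with $j_-\to-\infty$ and $j_+\to+\infty$. On the dense class of $f$ with $\widehat f$ smooth and supported away from the origin, I would show $\langle\Phi_{j_+}f,f\rangle\to 0$: since $\widehat{\phi_j}(\xi)=|D|^{j/2}\widehat\phi(D^{jT}\xi)$ concentrates the mass of $\widehat\phi$ at high frequencies as $j\to+\infty$ while $\widehat f$ avoids the origin, each term vanishes. Dually, $\langle\Phi_{j_-}f,f\rangle\to|\widehat\phi(0)|^2\|f\|^2=\|f\|^2$ by an approximate-identity (dominated-convergence) argument resting on the continuity of $\widehat\phi$ at $0$ and $\widehat\phi(0)=1$. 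This gives $\|f\|^2=\sum_{k,j,n}|\langle f,\psi^k_{j,n}\rangle|^2$ on the dense class, hence on all of $L^2(\R^d)$. I expect the main obstacle here to be the bookkeeping in the single-scale identity: the non-uniform subsampling forces the frequency sum to be split along the cosets $\Gamma_k/\Gamma$ and $\Gamma^*/\Lambda^*$ exactly as indexed in \eqref{eq:m}, and one must verify that all cross terms (distinct $k$, or distinct cosets) cancel, which is precisely where \eqref{eq:PR} is used in full.

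For the orthonormal-basis claim I would invoke the elementary fact that a Parseval frame all of whose vectors are unit vectors is automatically an orthonormal basis: from $1=\|\psi^k_{j,n}\|^2=\sum|\langle\psi^k_{j,n},\psi^{k'}_{j',n'}\rangle|^2$ one reads off that every cross inner product vanishes. Since $\|\psi^k_{j,n}\|=\|\psi^k\|$ by scaling and translation, it suffices to prove $\|\psi^k\|=1$. The standard route is first to establish that $\{\phi(\cdot-n)\}_{n\in\Lambda^{(0)}}$ is orthonormal, i.e. that $a(\xi):=\sum_{\gamma\in\Lambda^{(0)*}}|\widehat\phi(\xi+\gamma)|^2\equiv 1$. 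Using the refinement equation, the $\Lambda^{(0)*}$-periodicity of $M_0$, and the quadrature identity \eqref{eq:mirror} for $k=0$ (available since the bank is critically sampled), $a$ obeys a transfer/fixed-point relation under $\xi\mapsto D^T\xi$; iterating it and invoking the Cohen-type nondegeneracy hypothesis $\inf_{p>0,\,\xi\in S}|M_0((D^{-T})^p\xi)|>0$ forces $a\equiv a(0)=1$. With $\phi$ orthonormal and the dimensions matched by critical sampling, \eqref{eq:mirror} for each individual $k$ gives $\|\psi^k\|^2=1$, which finishes the proof. I expect the genuine difficulty to lie in this last step, namely upgrading the pointwise fixed-point relation for $a$ to the global identity $a\equiv 1$, since that is exactly where the nondegeneracy condition is needed to exclude zeros of $\widehat\phi$ on the reciprocal cell.
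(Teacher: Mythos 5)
The paper offers no proof of Theorem~\ref{thm:MRA-filter}: it is stated as a $d$-dimensional generalization of the classical one-dimensional results and the reader is referred to \cite{cohen1992biorthogonal, daubechies1992ten, mallat1989multiresolution}. Your proposal is a faithful and correct reconstruction of exactly that classical route, correctly adapted to non-uniform subsampling: the telescoping identity $\langle \Phi_{j-1}f,f\rangle=\langle\Phi_jf,f\rangle+\sum_{k}\langle\Psi^k_jf,f\rangle$ does reduce, via the two-scale relations and polyphase bookkeeping over $\Gamma_k/\Gamma$ and $\Gamma^*/\Lambda^*$, to the column-orthogonality $M(\xi)^*M(\xi)=|\Lambda/\Gamma|\,\mathrm{Id}$ of Theorem~\ref{thm:PR} (which, as you note, does not require $M(\xi)$ to be square, so the Parseval conclusion survives in the redundant case); the two boundary limits and the Cohen-type upgrade from Parseval frame to orthonormal basis via $a(\xi)=\sum_\gamma|\widehat\phi(\xi+\gamma)|^2\equiv\mathrm{const}$ and the unit-norm criterion are the standard ones, with \eqref{eq:mirror} for each $k$ (available from Proposition~\ref{prop:crit_PR} under critical sampling) supplying $\|\psi^k\|=1$. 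The only point to be explicit about in a write-up is the one you already flag: the locally uniform convergence of the infinite product \eqref{eq:def_phi} and the continuity of $\widehat\phi$ at the origin need some mild regularity of $M_0$ near $0$ beyond $M_0(0)=|D|^{1/2}$ (e.g.\ H\"older continuity, automatic for the compactly supported or trigonometric-polynomial filters used later in the paper); the theorem statement, like its classical antecedents, leaves this implicit.
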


\section{Admissible and permissible frequency domain partition}
\label{sec:adm_perm}

In this section, we first review the multidirectional Shannon wavelets with ideal localization in the Fourier domain. These wavelets are constructed using PR filter banks whose transfer functions are indicator functions forming an admissible partition of the frequency domain. We then discuss the permissibility condition, i.e., the existence of continuous, alias-free, and critically sampled PR filter banks supported mainly on a given admissible partition.

\subsection{Admissibility}
\label{sec:admissibility}

Given a lattice $\Lambda\subset \R^d$, let $S$ be the Voronoi reciprocal cell of $\Lambda$, and let $\left\{A_k\right\}_{k=0}^K$ be a partition of $S$.
\begin{definition}
\label{def:admissible}
  {\normalfont \cite{durand2007}} A partition $\left\{A_k\right\}_{k=0}^K$ is said to be admissible (with respect to $\Lambda$) if there exists a collection of sublattices $\left\{ \Gamma_k\right\}_{k=0}^K$ of $\Lambda$ such that
  \begin{align*}
    \left( \sqrt{|\Lambda/\Gamma_k|}~\chi_{\A_k},\sqrt{|\Lambda/\Gamma_k|}~\chi_{\A_k},\Gamma_k  \right)_{k=0}^K
  \end{align*}
  is a critically sampled PR filter bank, where
  \begin{align}
    \label{eq:mathcal_A_k}
    \A_k = A_k+\Lambda^*, \quad \text{and}~~\chi_{\A_k}~\text{is the indicator function on}~\A_k.
  \end{align}
This can be proved to be equivalent to $A_k$ being a reciprocal cell of $\Gamma_k$, $\forall k$, or
  \begin{align}
    \label{eq:a_k_shift_admissible}
    \left\{\A_k+\gamma\right\}_{\gamma \in \Gamma_k^*/\Lambda^*} \text{ is a partition of } \R^d.
  \end{align}
\end{definition}

Multidirectional Shannon wavelets are obtained from such admissible frequency partitions when the frequency support $\A_0$ of the refinement filter contains a neighborhood of the origin. We hereby present two examples of admissible partitions in 2D which lead to dyadic and hexagonal Shannon wavelets studied in \cite{durand2007, nguyen2005multiresolution,yin-wavelet-1, yin-wavelet-2}.

\vspace{.5em}
\noindent\textbf{Dyadic wavelets}. Let $S = [-\pi,\pi]^2$ be the reciprocal cell of $\Lambda = \Z^2$. We set $A_0$ and $\Gamma_0$  to  $A_0=[-\pi/2,\pi/2]^2$ and $\Gamma_0 = 2\Z^2$ corresponding to the classical dyadic refinement filter  in the 2D tensor wavelets. The frequency ring $S\setminus A_0$ is equally partitioned into $K = 6p~(p\ge 1)$ fan-shaped regions symmetric with respect to the origin. More specifically, the first $3p$ subbands $(A_k^p)_{1\le k\le 3p}$ subdivide the ``horizontal'' fan with the polar angle $\theta \in [-\pi/4, \pi/4]\cup [3\pi/4, 5\pi/4]$, and the remaining $(A_k^p)_{3p+1\le k\le 6p}$ subdivide the ``vertical'' fan with the polar angle $\theta \in [\pi/4, 3\pi/4]\cup [5\pi/4, 7\pi/4]$. The corresponding sublattices $\Gamma_k^p$ are
\begin{align}
\label{eq:rec_lattice_hp}
  \Gamma_k^p = \left\{
  \begin{aligned}
    &
    \begin{bmatrix}
      2p & 0\\
      2p & 4
    \end{bmatrix}\Z^2, \quad 1\le k \le 3p.\\
    &
    \begin{bmatrix}
      4 & 2p\\
      0 & 2p
    \end{bmatrix}\Z^2, \quad 3p+1\le k\le 6p.\\
  \end{aligned}\right.
\end{align}
 Two special cases of such frequency partitions corresponding to $p=1$ (6 directions) and $p=2$ (12 directions) are shown in  Figures~\ref{fig:rec_p1} and \ref{fig:rec_p2}.

\begin{figure}
    \centering
    \begin{subfigure}[t]{0.22\textwidth}
        \includegraphics[width=\textwidth]{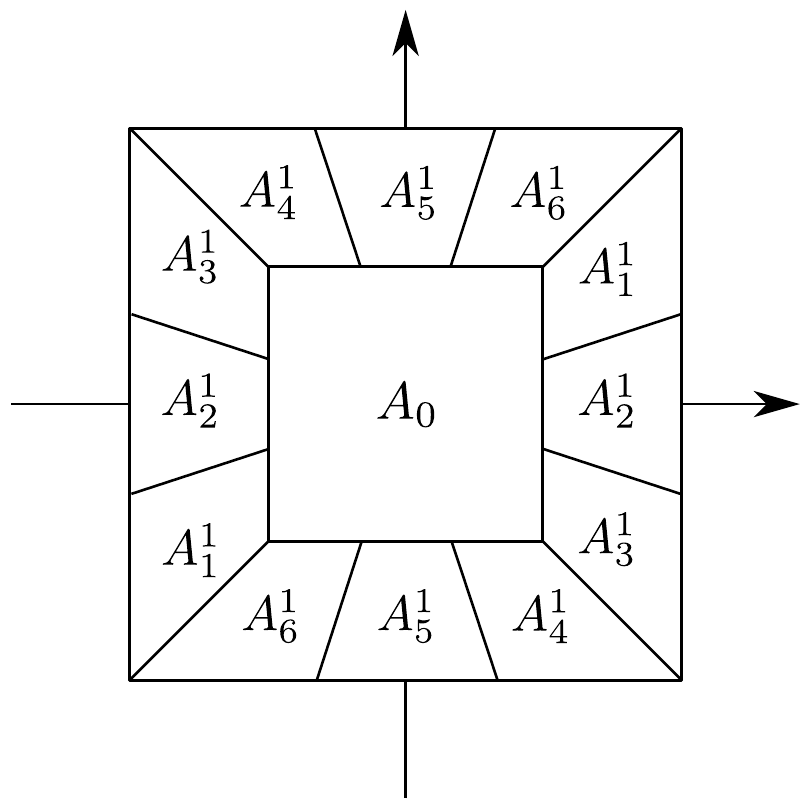}
        \caption{\small Dyadic wavelets with $6p = 6$ directions.}
        \label{fig:rec_p1}
    \end{subfigure}
     ~
    \begin{subfigure}[t]{0.22\textwidth}
        \includegraphics[width=\textwidth]{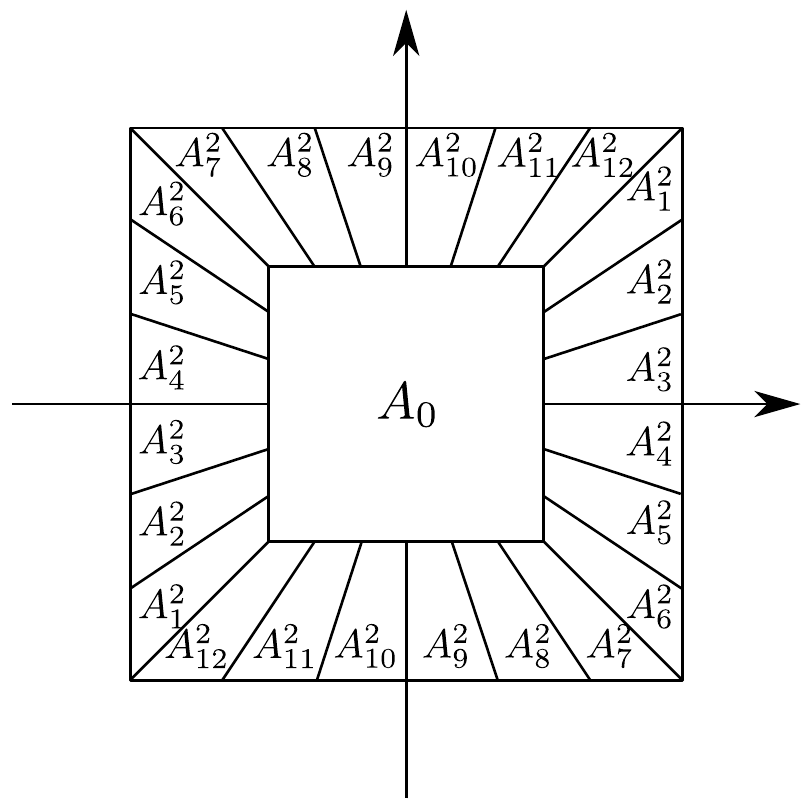}
        \caption{\small Dyadic wavelets with $6p = 12$ directions.}
        \label{fig:rec_p2}
    \end{subfigure}
     ~
    \begin{subfigure}[t]{0.22\textwidth}
        \includegraphics[width=\textwidth]{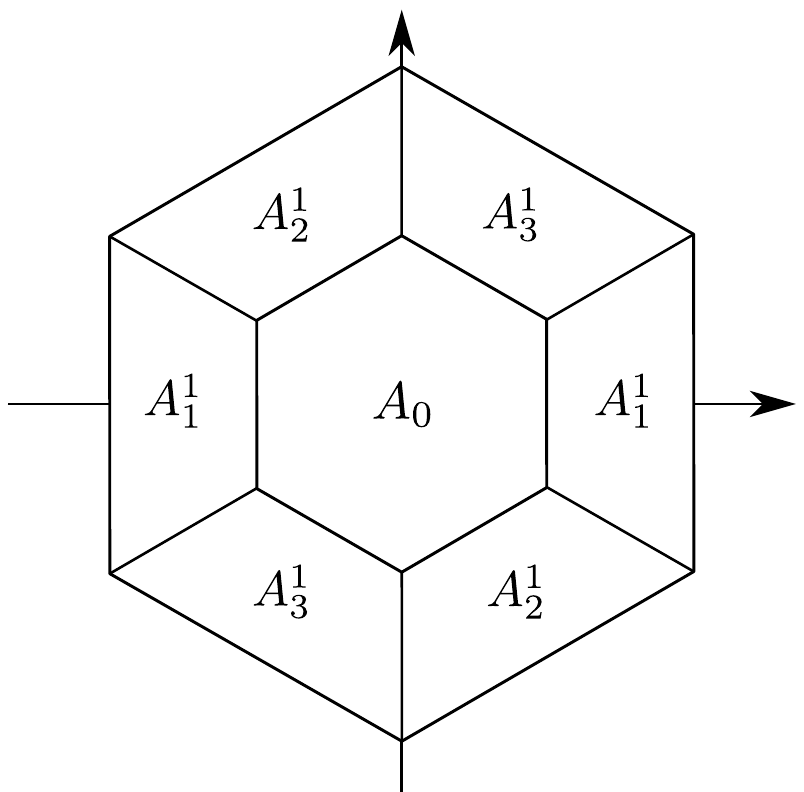}
        \caption{\small Hexagonal wavelets with $3p = 3$ directions.}
        \label{fig:hex_p1}
    \end{subfigure}
     ~
    \begin{subfigure}[t]{0.22\textwidth}
        \includegraphics[width=\textwidth]{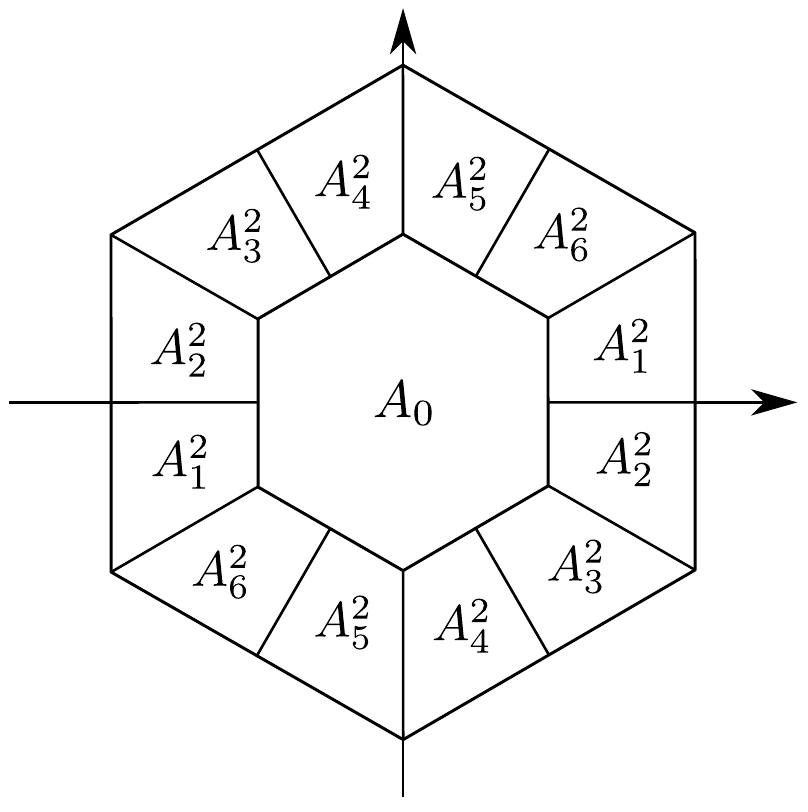}
        \caption{\small Hexagonal wavelets with $3p = 6$ directions.}
        \label{fig:hex_p2}
    \end{subfigure}
    \caption{ \small Admissible frequency partitions.}
\end{figure}

\vspace{.5em}
\noindent\textbf{Hexagonal wavelets}. We consider next the hexagonal wavelets, where the original lattice is
\begin{align}
  \label{eq:hex_lattice}
  \Lambda =
  \begin{bmatrix}
    1 & 0\\
    \frac{1}{\sqrt{3}} & \frac{2}{\sqrt{3}}
  \end{bmatrix} \Z^2.
\end{align}
The corresponding reciprocal cell $S$ is a hexagon defined by
\begin{align*}
  S = \left\{\xi\in\R^2: |\xi_1|\le \pi, ~|\xi_1+\sqrt{3}\xi_2|\le 2\pi, ~|\xi_1-\sqrt{3}\xi_2|\le 2\pi  \right\}.
\end{align*}
The refinement filter $A_0$ is chosen again to have the same shape but $1/4$ the size of $S$, i.e., 
\begin{align*}
  A_0 = \frac{1}{2}S, \quad \Gamma_0 = 2\Lambda.
\end{align*}
As before, the frequency ring $S\setminus A_0$ can be further partitioned into $3p$ fan-shaped regions (See Figure~\ref{fig:hex_p1} and Figure~\ref{fig:hex_p2} for the special cases when $p=1$ and $2$.) The first $p$ subbands $\left(A_k^p\right)_{1\le k\le p}$ subdividing the ``horizontal'' fan with the polar angle $\theta\in [-\pi/6, \pi/6]\cup [5\pi/6, 7\pi/5]$ are downsampled on the same sublattice
\begin{align}
  \label{eq:hex_lattice_hp_1}
\Gamma_k^p = 
  \begin{bmatrix}
    2p & 4 \\
    -\frac{2}{\sqrt{3}}p & 0
  \end{bmatrix}\Z^2, \quad 1\le k \le p.
\end{align}
The other two groups of subbands $\left(A_k^p\right)_{p+1\le k\le 2p}$ and $\left(A_k^p\right)_{2p+1\le k\le 3p}$ are downsampled on the sublattices obtained from rotating \eqref{eq:hex_lattice_hp_1} by $\pm \pi/3$. More specifically,
\begin{align}
\label{eq:hex_lattice_hp_2}
  \Gamma_k^p = \left\{
  \begin{aligned}
    &
    \begin{bmatrix}
      0 & -2\\
      \frac{4}{\sqrt{3}}p & 2\sqrt{3}
    \end{bmatrix}\Z^2, \quad p+1\le k \le 2p.\\
    &
    \begin{bmatrix}
      2p & 2\\
      \frac{2}{\sqrt{3}}p & 2\sqrt{3}
    \end{bmatrix}\Z^2, \quad 2p+1 \le k\le 3p.\\
  \end{aligned}\right.
\end{align}

\subsection{Permissibility}

An admissible partition is said to be permissible if there exists a critically sampled continuous PR filter bank with each subband filter $M_k$ supported in an 
$\epsilon$-neighborhood of $\A_k$. It has been shown in \cite{durand2007} that the dyadic wavelets with $6p~(p\ge 1)$ directions and the hexagonal wavelets with $3p~(p\ge 2)$ directions are not permissible. More specifically,

\begin{prop}
  \label{prop:permissibility}
  Let $(A^p_k)_{0\le k\le K}$ be the admissible frequency partition defined in Section~\ref{sec:admissibility}, where $K = 6p ~(p\ge 1)$ for dyadic wavelets and $K = 3p ~(p\ge 2)$ for hexagonal wavelets. Given any $\epsilon>0$ small enough, there does not exist $\left(M_k\right)_{0\le k\le K}\in L^2(\R^2/\Lambda^*)$ satisfying the following four conditions simultaneously
  \begin{enumerate}
  \item The filter bank $(M_k,M_k, \Lambda\to\Gamma_k)_{k=0}^K$ is PR.
  \item The filter bank $(M_k,M_k, \Lambda\to\Gamma_k)_{k=0}^K$ is critically sampled.
  \item Each $M_k$ is continuous.
  \item Each $M_k$ is supported on $\A_k^p+B(0,\epsilon) = A_k^p+B(0,\epsilon)+\Lambda^*$, where $B(0,\epsilon)$ is the Euclidean ball of radius $\epsilon$.
  \end{enumerate}
\end{prop}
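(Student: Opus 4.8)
\section*{Proof proposal}

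The plan is to argue by contradiction, converting the four analytic hypotheses into a topological impossibility. First I would use critical sampling to make the matrix $M(\xi)$ of Theorem~\ref{thm:PR} square of size $N:=|\Lambda/\Gamma|$, where $\Gamma$ is a common sublattice of all the $\Gamma_k$ (recall that critical sampling forces $\sum_k|\Gamma_k/\Gamma|=N$, so the number of rows $(k,\eta_k)$, with $0\le k\le K$ and $\eta_k\in\Gamma_k/\Gamma$, equals the number of columns $\gamma\in\Gamma^*/\Lambda^*$). Since $\widetilde{M}_k=M_k$ here, the PR identity \eqref{eq:PR} becomes $M(\xi)^*M(\xi)=N\,\mathrm{Id}$, so $U(\xi):=N^{-1/2}M(\xi)$ is unitary for a.e.\ $\xi$; continuity (hypothesis~3) upgrades this to a genuinely continuous map $U:\mathbb{T}^2\to\mathrm{U}(N)$ on the torus $\mathbb{T}^2=\R^2/\Lambda^*$ (the entries $N^{-1/2}e^{i\langle\eta_k,\xi+\gamma\rangle}M_k(\xi+\gamma)$ are $\Lambda^*$-periodic because $\langle\eta_k,\lambda^*\rangle\in2\pi\Z$ for $\eta_k\in\Lambda$ and $\lambda^*\in\Lambda^*$).

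Next I would use the support hypothesis (hypothesis~4) to pin down the combinatorial shape of $U$. Since each $A_k$ is a reciprocal cell of $\Gamma_k$, the tiling \eqref{eq:a_k_shift_admissible} shows that for $\xi$ outside the $\epsilon$-neighborhood of all sector interfaces there is, for every coset $\gamma$, exactly one pair $(k,\eta_k)$ with $\xi+\gamma\in\A_k$; together with Proposition~\ref{prop:crit_PR} (which forces $|M_k|\equiv\sqrt{|\Lambda/\Gamma_k|}$ on the interior of $\A_k$) this proves that, away from the boundary layer, $U(\xi)$ is a \emph{generalized permutation matrix}: one unimodular entry $\pm\,e^{i\langle\eta_k,\xi+\gamma\rangle}$ per row and column. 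The assignment $\gamma\mapsto(k,\eta_k)$ is locally constant off the boundary layer and changes by a transposition each time $\xi$ crosses an interface; but $U$ itself stays continuous there, so each transposition is realized by a continuous unitary rotation inside the $\epsilon$-overlap, which rigidly links the phase of the two filters meeting at that interface.

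Finally I would extract a winding obstruction. The scalar $\det U(\xi)=e^{i\Phi(\xi)}$ defines a continuous map $\mathbb{T}^2\to S^1$, whose homotopy class is an element $(w_1,w_2)\in H^1(\mathbb{T}^2;\Z)\cong\Z^2$ of integer winding numbers along the two generating cycles. Off the boundary layer $\Phi(\xi)=\sum_\gamma\langle\eta_{k(\gamma,\xi)},\xi+\gamma\rangle\ (\mathrm{mod}\ 2\pi)$, so I can compute $(w_1,w_2)$ explicitly from the shifts $\eta_k$ of the sectors swept out along each cycle, subject to the phase-matching constraints imposed at the interfaces by the continuous rotations of the previous step. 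The obstruction is that this forced value cannot be reconciled with integrality precisely when two or more angular sectors are subsampled on a \emph{common} lattice: this is the case for the dyadic partitions with $K=6p$ $(p\ge1)$, where $3p$ sectors share $\Gamma_k^p$ in \eqref{eq:rec_lattice_hp}, and for the hexagonal partitions with $K=3p$ $(p\ge2)$, where the sectors sharing \eqref{eq:hex_lattice_hp_1}--\eqref{eq:hex_lattice_hp_2} number at least two, but not for hexagonal $p=1$, where every sector carries its own lattice and the windings reconcile. I expect the heart of the proof, and its main difficulty, to be exactly the accounting across the boundary layer: one must show that the continuous-unitarity constraint inside each $\epsilon$-overlap transports the filter phase rigidly from one shared-lattice sector to the next, so that the accumulated phase around the relevant cycle is forced to a value incompatible with the integer dictated by topology, and that this forcing is robust as $\epsilon\to0$. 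Tracking the signs of the transpositions together with the lattice shifts $\eta_k$ through the junctions—rather than any single local estimate—is where the real work lies.
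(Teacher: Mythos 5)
Your setup is sound and matches the paper's framework: critical sampling makes $M(\xi)$ square, PR with $\widetilde{M}_k=M_k$ makes $N^{-1/2}M(\xi)$ unitary, continuity and $\Lambda^*$-periodicity of the entries give a continuous unitary-valued map on the torus, and Proposition~\ref{prop:crit_PR} together with the support hypothesis and the tiling \eqref{eq:a_k_shift_admissible} does force $U(\xi)$ to be a generalized permutation matrix away from the $\epsilon$-neighborhood of the interfaces. But from there your route diverges from the paper's, and it has a genuine gap. The obstruction the paper relies on (via \cite{durand2007}, and made precise by its own Proposition~\ref{prop:bdry} and the singular/regular boundary classification) is purely \emph{local}: at a singular boundary of $\A_k$ there is a shift $\gamma\in\Gamma_k^*\setminus\Lambda^*$ carrying that piece of $\partial\A_k$ to another piece of $\partial\A_k$ such that no other filter $k'\in I_\gamma$ has support reaching both sides; the shift-cancellation condition then degenerates to the single term $m_k(\xi)\overline{m_k(\xi+\gamma)}=0$, which forces $m_k$ to vanish on one side of the boundary, hence on the boundary itself by continuity. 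The same holds for every filter meeting that boundary (the boundaries are ``doubly singular''), so the identity summation $\sum_k|m_k|^2=1$ fails at a single point. No global accounting is required.

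Your proposal instead assumes that ``each transposition is realized by a continuous unitary rotation inside the $\epsilon$-overlap'' at \emph{every} interface, and then looks for a contradiction in the winding of $\det U$ around the torus. That premise is precisely what fails at the singular boundaries: there, the two columns of $U(\xi)$ (indexed by $\gamma$ and by $0$, say) that would have to exchange their unimodular entry carry that entry in the \emph{same row} $(k,\eta_k)$ — it is the same filter $m_k$ evaluated at $\xi$ and at $\xi+\gamma$ with $\gamma\in\Gamma_k^*$ — and orthogonality of those two columns forbids both entries from being simultaneously nonzero, with no other row available to compensate. So no continuous rotation exists in the first place, the generalized-permutation structure cannot be continuously continued across such an interface, and the homotopy bookkeeping never gets started. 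Relatedly, the decisive step of your argument — that the forced phase accumulation around a cycle is ``incompatible with integrality'' — is never carried out (any continuous map $\mathbb{T}^2\to S^1$ has integer winding numbers, so you would have to show the constraints force a genuinely discontinuous phase, which is exactly the local statement above in disguise), and your heuristic for when the obstruction appears (sectors sharing a common sublattice) happens to match the stated ranges of $p$ but is not derived. In short: the correct mechanism is the local one at singular boundaries, and your global topological framing both defers the essential computation and rests on an assumption that the singular boundaries violate.
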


The reason why such partitions are not permissible is the existence of ``singular'' boundaries, which we shall 
discuss in more detail in the next section, that are 
incontrovertible obstacles to the continuity of the $M_k$.
In order to build multidirectional wavelet systems, we thus have to sacrifice either the critical sampling condition (so that we obtain redundant frames rather than bases) or the continuity of $M_k$ in certain directions.

\section{Hexagonal wavelets with optimal spatial frequency localization}
\label{sec:basis_and_frame}

We discuss, in this section, the explicit construction of alias-free hexagonal wavelet  bases and low-redundancy frames with optimal spatial  localization. Compared to dyadic wavelets, one  benefit of  hexagonal wavelets is that their refinement filter is more isotropic (it is invariant by rotation of $\pi/3$ instead of $\pi/2$.) Moreover, hexagonal lattices require the least number of samples to represent images whose spectrum is supported on a disc \cite{durand2007,durand2019rotation}.  Proposition~\ref{prop:permissibility} states that there do not exist alias-free hexagonal wavelet bases with continuous Fourier transforms when the number of high frequency directions is six or higher. We henceforth study the optimally achievable hexagonal wavelet systems when at least one constraint in Proposition~\ref{prop:permissibility} is partially violated, i.e., orthonormal bases with only unavoidable discontinuity in the frequency domain, or low-redundancy frames with better spatial frequency localization. The analysis is based on hexagonal wavelets with six directions, although it can be easily generalized for any admissible frequency partition. Without explicit mentioning, we use, in this section, $\Gamma_k$ and $A_k$ to denote $\Gamma_k^{p=2}$ and $A_k^{p=2}$.

\subsection{Alias-free wavelet orthonormal bases with optimal spatial localization }
\label{sec:basis}

\begin{SCfigure}
  \centering
  \includegraphics[width=.5\textwidth]{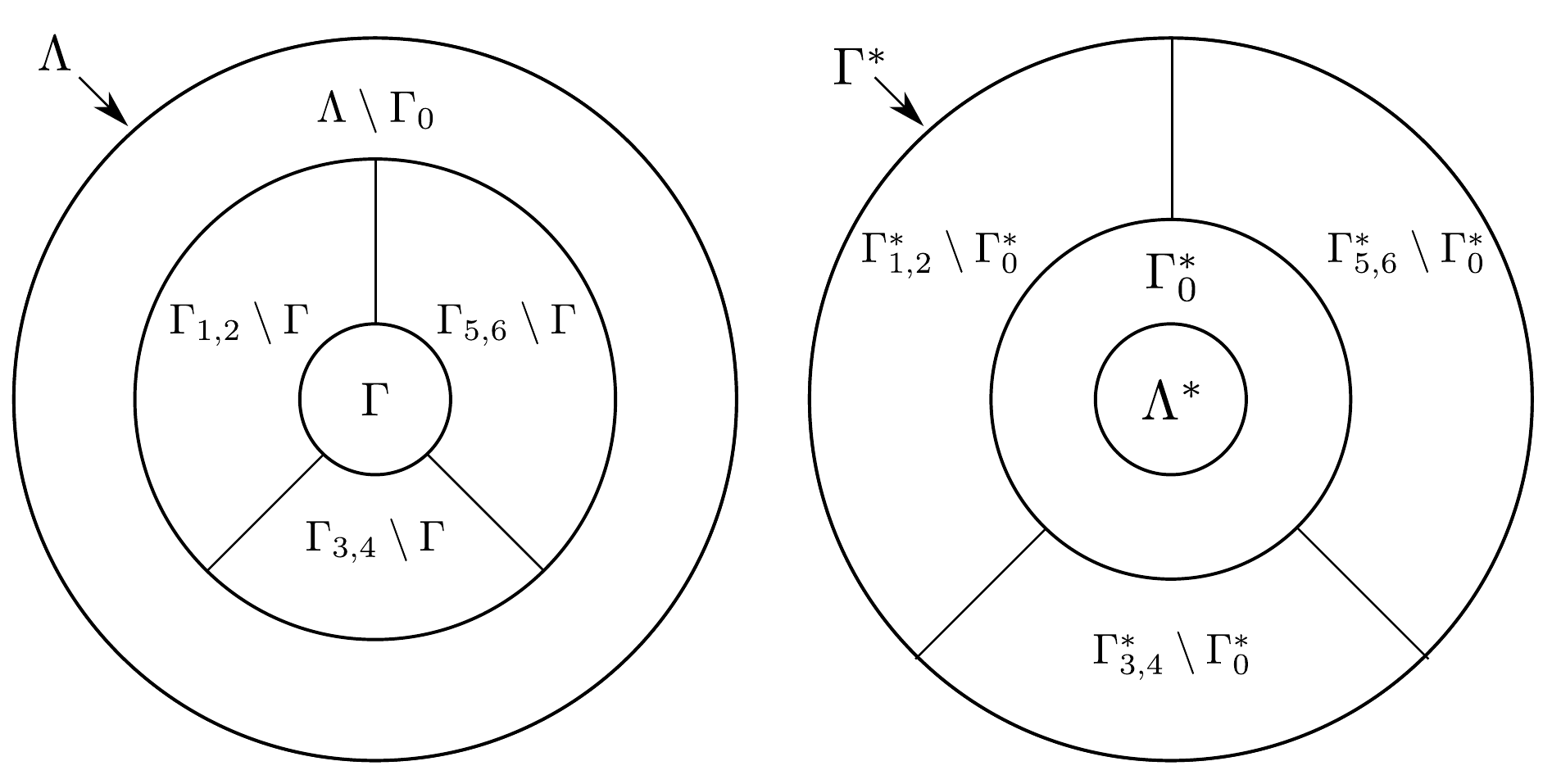}
  \caption{  \small Venn diagrams indicating the inclusion relation among the lattices $\left\{\Lambda,\Gamma_0, \Gamma_{1,2},\Gamma_{3,4},\Gamma_{5,6}, \Gamma\right\}$ and their corresponding reciprocal lattices $\left\{\Lambda^*,\Gamma_0^*, \Gamma_{1,2}^*,\Gamma_{3,4}^*,\Gamma_{5,6}^*, \Gamma^*\right\}$ of the hexagonal wavelets with six directions.
  }
  \label{fig:venn}
\end{SCfigure}

Let $\Lambda, \left\{\Gamma_k\right\}_{0\le k\le 6}$ be defined as in \eqref{eq:hex_lattice} \eqref{eq:hex_lattice_hp_1} \eqref{eq:hex_lattice_hp_2}, and define
\begin{align}
  \Gamma \coloneqq \bigcap_{k=0}^6\Gamma_k, \quad \Gamma_{1,2}\coloneqq \Gamma_1 = \Gamma_2,  \quad  \Gamma_{3,4}\coloneqq \Gamma_3 = \Gamma_4, \quad  \Gamma_{5,6}\coloneqq \Gamma_5 = \Gamma_6.
\end{align}
The inclusion relation among the lattices are illustrated via two Venn diagrams in Figure~\ref{fig:venn}.  Based on Theorem~\ref{thm:PR}, we can simplify the PR condition for this specific choice of lattices as follows
\begin{prop}
  \label{prop:PR_hex}
  The filter bank $(M_k, M_k, \Lambda \to \Gamma_k)_{k=0}^6$ is PR if and only if the following two conditions hold  for $a.e.~\xi\in\R^2$
  \begin{align}
    \label{eq:PR_hex_identity_sum}
    &\quad \quad \quad \quad \sum_{k=0}^6\left|m_k(\xi)\right|^2=1,\\
    \label{eq:PR_hex_shift_cancel}
    &\left\{
    \begin{aligned}
      &\sum_{k=0}^6\overline{m_k(\xi+\gamma)}m_k(\xi) = 0, \quad && \forall \gamma \in \Gamma_0^*\setminus \Lambda^*\\
      &\overline{m_1(\xi+\gamma)}m_1(\xi)+\overline{m_2(\xi+\gamma)}m_2(\xi) = 0, \quad && \forall \gamma \in \Gamma_{1,2}^*\setminus \Gamma_0^*\\
      &\overline{m_3(\xi+\gamma)}m_3(\xi)+\overline{m_4(\xi+\gamma)}m_4(\xi) = 0, \quad && \forall \gamma \in \Gamma_{3,4}^*\setminus \Gamma_0^*\\
      &\overline{m_5(\xi+\gamma)}m_5(\xi)+\overline{m_6(\xi+\gamma)}m_6(\xi) = 0, \quad && \forall \gamma \in \Gamma_{5,6}^*\setminus \Gamma_0^*\\
    \end{aligned}\right.
  \end{align}
where $m_k(\xi) = M_k(\xi)/\sqrt{|\Lambda/\Gamma_k|}$. Condition \eqref{eq:PR_hex_shift_cancel} can be written in the following compact form
  \begin{align}
    \label{eq:PR_hex_compact}
    \sum_{k\in I_{\gamma}}\overline{m_k(\xi+\gamma)}m_k(\xi) = 0, \quad \forall \gamma \in \Gamma^*\setminus \Lambda^*, \quad \text{where  } I_{\gamma}=\left\{ k: \gamma\in\Gamma_k^* \right\}.
   \end{align}
\end{prop}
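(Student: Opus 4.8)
The plan is to specialize the general perfect-reconstruction criterion of Theorem~\ref{thm:PR} to the present self-dual situation, where $\widetilde{M}_k = M_k$, with the common sublattice $\Gamma = \bigcap_{k=0}^6 \Gamma_k$, and then read off the scalar identities by expanding the resulting matrix equation entrywise. Since the filter bank is critically sampled (indeed $\tfrac14 + 6\cdot\tfrac18 = 1$), $M(\xi)$ is square, and with $\widetilde{M}=M$ the condition \eqref{eq:PR} reads $M(\xi)^* M(\xi) = |\Lambda/\Gamma|\,Id$ for a.e.\ $\xi$; this says that, up to the factor $|\Lambda/\Gamma|$, the columns of $M(\xi)$ (indexed by $\gamma\in\Gamma^*/\Lambda^*$ as in \eqref{eq:m}) are orthonormal. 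I would therefore compute the Gram entries of these columns directly.

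Fixing two column indices $\gamma,\gamma'\in\Gamma^*/\Lambda^*$ and using \eqref{eq:m}, the inner product of the corresponding columns is
\begin{align*}
  \left(M(\xi)^* M(\xi)\right)_{\gamma,\gamma'} = \sum_{k=0}^6 \overline{M_k(\xi+\gamma)}\,M_k(\xi+\gamma') \sum_{\eta_k \in \Gamma_k/\Gamma} e^{i\langle \eta_k,\, \gamma'-\gamma\rangle},
\end{align*}
where the $\xi$-dependent phases cancel. The inner sum is a character sum over the finite abelian group $\Gamma_k/\Gamma$ evaluated at $\eta_k \mapsto e^{i\langle \eta_k, \gamma'-\gamma\rangle}$, which is a well-defined character because $\gamma'-\gamma\in\Gamma^*$. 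By orthogonality of characters it equals $|\Gamma_k/\Gamma|$ when this character is trivial, i.e.\ precisely when $\gamma'-\gamma\in\Gamma_k^*$, and vanishes otherwise. On the diagonal $\gamma'=\gamma$ every inner sum collapses to $|\Gamma_k/\Gamma|$; substituting $|\Gamma_k/\Gamma|\,|M_k|^2 = |\Lambda/\Gamma|\,|m_k|^2$, the diagonal requirement $(M^*M)_{\gamma,\gamma}=|\Lambda/\Gamma|$ becomes exactly the partition-of-unity identity \eqref{eq:PR_hex_identity_sum}. For $\gamma'\neq\gamma$, only the indices $k$ with $\gamma'-\gamma\in\Gamma_k^*$ survive, so the off-diagonal vanishing reads $\sum_{k\in I_\delta}\overline{m_k(\xi)}\,m_k(\xi+\delta)=0$ with $\delta=\gamma'-\gamma\in\Gamma^*\setminus\Lambda^*$; a change of variable then puts this in the form \eqref{eq:PR_hex_compact}.

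To pass from \eqref{eq:PR_hex_compact} to the four explicit lines of \eqref{eq:PR_hex_shift_cancel}, I would determine $I_\gamma=\{k:\gamma\in\Gamma_k^*\}$ for each $\gamma\in\Gamma^*\setminus\Lambda^*$ using the inclusions of Figure~\ref{fig:venn}. The key lattice facts, which I would verify from the generators \eqref{eq:hex_lattice}, \eqref{eq:hex_lattice_hp_1}, \eqref{eq:hex_lattice_hp_2} (or via the $\pi/3$-rotation mapping $\Gamma_{1,2}$ to $\Gamma_{3,4}$ and $\Gamma_{5,6}$), are: (i) $\Gamma_{1,2},\Gamma_{3,4},\Gamma_{5,6}$ are the three distinct index-$2$ sublattices of $\Gamma_0$, so dually $\Lambda^*\subset\Gamma_0^*\subset\Gamma_i^*\subset\Gamma^*$ with $[\Gamma_i^*:\Gamma_0^*]=2$ and $[\Gamma^*:\Gamma_0^*]=4$; and (ii) $\Gamma^*/\Gamma_0^*\cong(\Z/2)^2$ has the three images $\Gamma_i^*/\Gamma_0^*$ as its three order-$2$ subgroups. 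Since in $(\Z/2)^2$ these three subgroups meet pairwise only in $0$ and jointly exhaust the group, one gets the disjoint decomposition $\Gamma^*\setminus\Lambda^* = (\Gamma_0^*\setminus\Lambda^*)\sqcup\bigsqcup_i(\Gamma_i^*\setminus\Gamma_0^*)$, with $I_\gamma=\{0,\dots,6\}$ on the first piece and $I_\gamma=\{2i-1,2i\}$ on the $i$-th directional piece. Feeding these index sets into \eqref{eq:PR_hex_compact} reproduces the four lines of \eqref{eq:PR_hex_shift_cancel} one at a time, and conversely the four lines recombine into \eqref{eq:PR_hex_compact}.

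The character-sum computation is routine once $M(\xi)$ is written out, so the real work—and the step most specific to this partition—is the lattice bookkeeping of item (ii): checking that the three directional sublattices are exactly the three index-$2$ subgroups of $\Gamma_0$, hence that their reciprocals cover $\Gamma^*$ while meeting pairwise only in $\Gamma_0^*$. I expect this to be the main obstacle, since it is precisely what forbids ``cross terms'' between distinct directional groups (e.g.\ a $\gamma\in\Gamma_{1,2}^*\cap\Gamma_{3,4}^*$ outside $\Gamma_0^*$): were the three subgroups not in this general position, some $I_\gamma$ would mix indices from two directional groups and \eqref{eq:PR_hex_shift_cancel} would fail to split into the stated block form.
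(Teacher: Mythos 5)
Your proposal is correct and follows essentially the same route as the paper's proof in Appendix~\ref{apdx:prop_PR_hex}: specialize Theorem~\ref{thm:PR} with $\widetilde{M}_k=M_k$, evaluate the character sums $\sum_{\eta_k\in\Gamma_k/\Gamma}e^{i\langle\eta_k,\delta\rangle}$ by orthogonality (nonzero precisely when $\delta\in\Gamma_k^*$), and case-split over the segments $\Lambda^*$, $\Gamma_0^*\setminus\Lambda^*$, $\Gamma_i^*\setminus\Gamma_0^*$ of the Venn diagram. The only cosmetic differences are that you derive the single-shift scalar conditions from the full Gram matrix $(M^*M)_{\gamma,\gamma'}$ rather than starting from the reduced form, and that you make explicit the lattice fact (the three $\Gamma_i^*/\Gamma_0^*$ are the three order-$2$ subgroups of $\Gamma^*/\Gamma_0^*\cong(\Z/2)^2$) that the paper reads off Figure~\ref{fig:venn}.
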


The detailed proof of this proposition can be found in Appendix~\ref{apdx:prop_PR_hex}. Borrowing the terminology in \cite{yin-wavelet-1}, we call \eqref{eq:PR_hex_identity_sum} the \textit{identity summation} condition, and \eqref{eq:PR_hex_shift_cancel}\eqref{eq:PR_hex_compact} the \textit{shift cancellation} condition. In order to construct a critically-sampled PR filter bank with $m_k\in L^2(\R^2/\Lambda^*)$  having optimal continuity and supported on $\A_k+B(0,\epsilon)$, we thus need to continuously extend $m_k(\xi)$ across the boundaries of $\A_k$ such that \eqref{eq:PR_hex_identity_sum} and \eqref{eq:PR_hex_shift_cancel} hold valid for $a.e.~\xi\in\R^2$.

\subsubsection{Boundary classification}

Proposition~\ref{prop:permissibility} suggests the existence of certain boundaries of $\A_k$ beyond which $m_k$ cannot continuously extend while performing a perfect reconstruction. Building on the idea proposed in \cite{yin-wavelet-1}, we explain how to identify such boundaries while making the definition and analysis more rigorous.

Given $\epsilon\in [0,\sqrt{3}\pi/12)$, we assume the support $\supp(m_k)$ of $m_k\in L^2(\R^2/\Lambda^*)$ satisfies
\begin{align}
  \overline{\A_k} \subset \supp(m_k)\subset \overline{\A_k}+\overline{B(0,\epsilon)}.
\end{align}
For any $\gamma\in \Gamma_k^*\setminus \Lambda^*$, or equivalently $0\neq \gamma \in \Gamma_k^*/\Lambda^*$, define
\begin{align}
  \label{eq:def_C_epsilon}
  C_\epsilon(k,\gamma) \coloneqq \left(\overline{\A_k}+\overline{B(0,\epsilon)}\right) \cap \left(\overline{\A_k}+\overline{B(0,\epsilon)}-\gamma\right)
  \supset \supp\left(m_k(\cdot)\overline{m_k(\cdot+\gamma)}\right).
\end{align}
See Figure~\ref{fig:hex_shift_1} for the special case $C_{\epsilon}(1,(\pi,0))$. In particular, when $\epsilon = 0$,
\begin{align}
  C_0(k,\gamma) = \overline{\A_k} \cap \left(\overline{\A_k}-\gamma\right) \subset \partial \A_k.
\end{align}
The following result shows an unavoidable restriction on the continuity of $m_k$ across certain boundaries of $\A_k$.

\begin{figure}
    \centering
    \begin{subfigure}[t]{0.46\textwidth}
        \includegraphics[width=\textwidth]{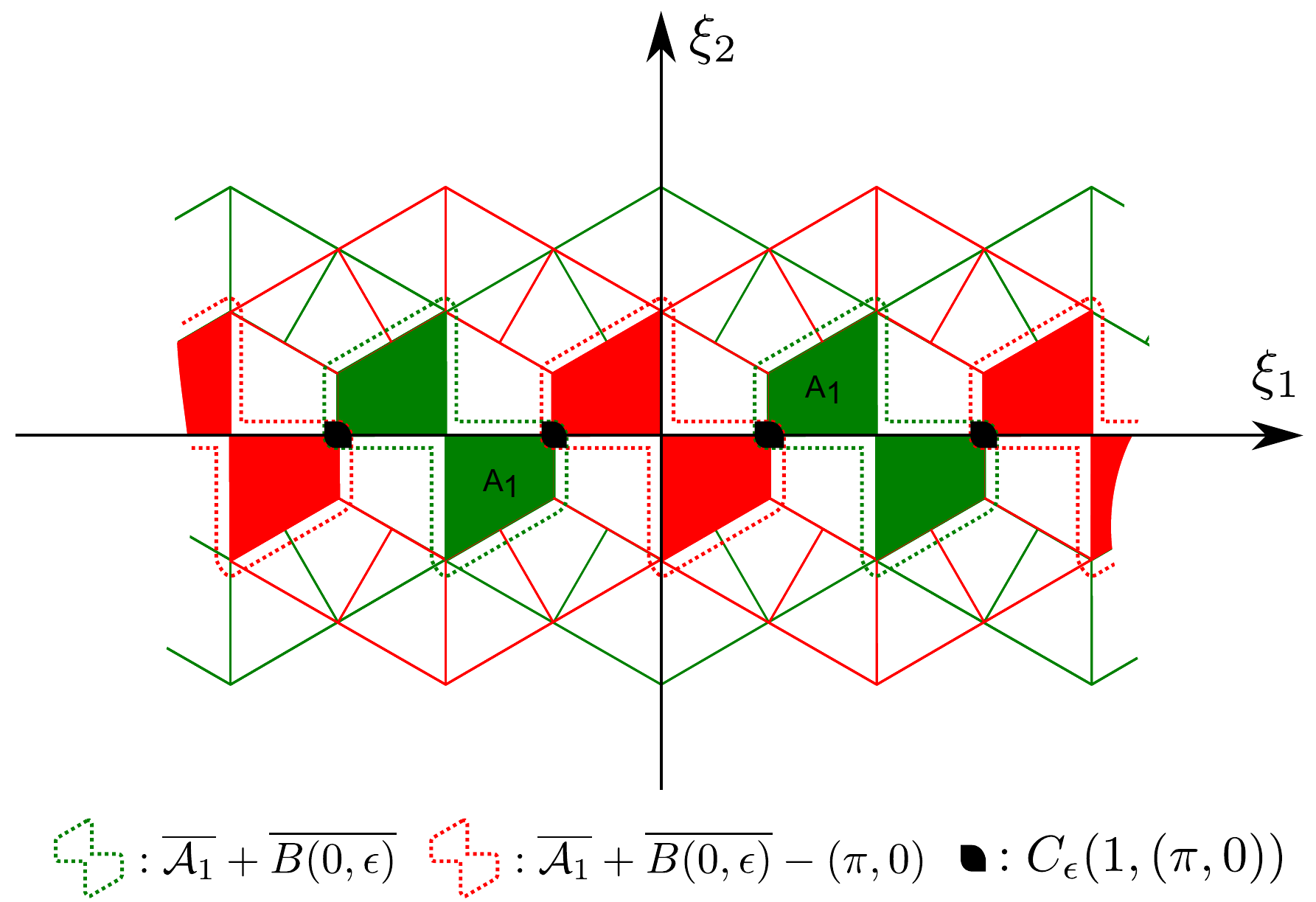}
        \caption{}
        \label{fig:hex_shift_1}
    \end{subfigure}
    ~~~
    \begin{subfigure}[t]{0.23\textwidth}
        \includegraphics[width=\textwidth]{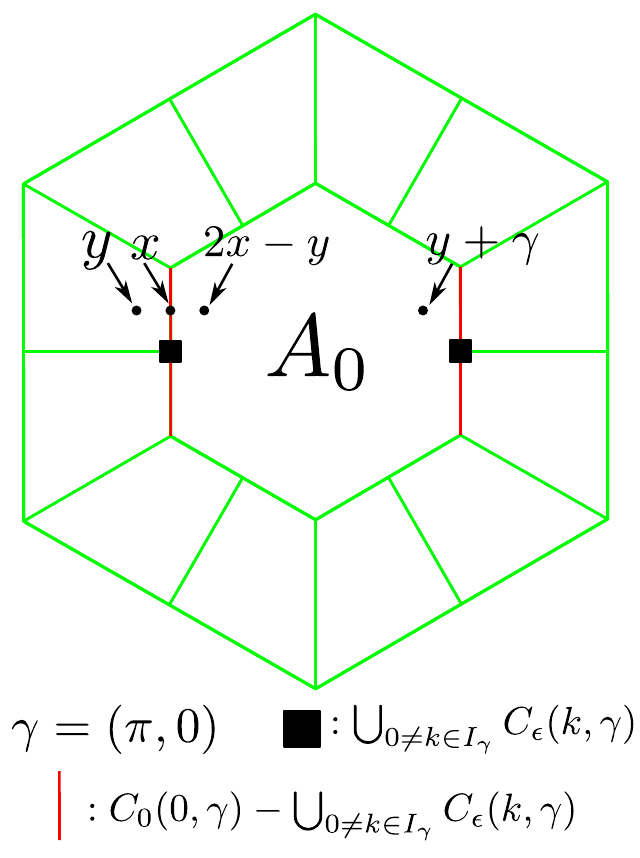}
        \caption{}
        \label{fig:nonextend}
    \end{subfigure}
    ~~~
    \begin{subfigure}[t]{0.23\textwidth}
        \includegraphics[width=\textwidth]{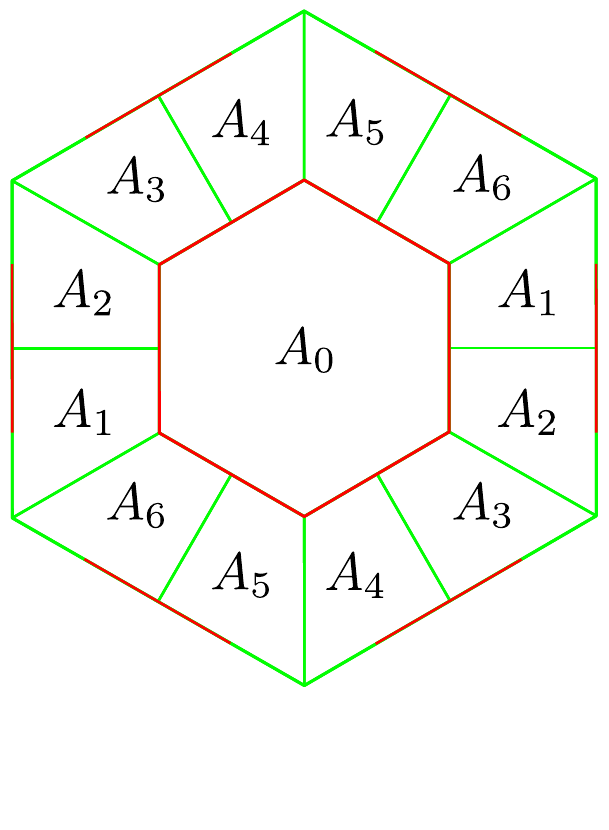}
        \caption{}
        \label{fig:bdry_classify}
    \end{subfigure}
    \caption{ \small \textbf{(a)}: The support of $m_1$ is larger than $\overline{\A_1}$ (solid green) and included in $\overline{\A_1}+\overline{B(0,\epsilon)}$ (the area enclosed in dashed green). The corresponding sets after shifting by $\gamma = (\pi,0)$ are colored in red. The set in color black corresponds to $C_\epsilon(1,(\pi,0))$, the intersection of $\overline{\A_1}+\overline{B(0,\epsilon)}$ (dashed green) and $\overline{\A_1}+\overline{B(0,\epsilon)}-\gamma$ (dashed red). Note that $C_\epsilon(k,(\pi,0)) \neq \emptyset$ only when $k=0,1,2$. \textbf{(b)}: A pictorial explanation for the proof of Proposition~\ref{prop:bdry} for the special case $k=0$ and $\gamma = (\pi,0)$. \textbf{(c)}: Regular boundaries (green) and singular boundaries (red) of the hexagonal wavelets with 6 directions.}
\end{figure}

\begin{prop}
  \label{prop:bdry}
  Suppose $\epsilon > 0$, $k\in \{0,\ldots, 6\}$, and $\gamma \in \Gamma_k^*\setminus \Lambda^*$, then $m_k$ cannot have nonzero continuous extension in the normal direction beyond the boundary
  \begin{align}
    C_0(k,\gamma) - \bigcup_{k'\in I_{\gamma} \atop k'\neq k}C_{\epsilon}(k',\gamma) \subset \partial \A_k
  \end{align}
without violating the PR condition \eqref{eq:PR_hex_compact}.
\end{prop}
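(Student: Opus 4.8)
The plan is to exploit the removal of the sets $C_\epsilon(k',\gamma)$ in the statement in order to \emph{isolate a single term} of the shift-cancellation sum \eqref{eq:PR_hex_compact}, and then to convert the resulting forced vanishing into a contradiction with the support hypothesis $\overline{\A_k}\subset\supp(m_k)$. Fix $\gamma\in\Gamma_k^*\setminus\Lambda^*$; since $\Gamma=\bigcap_{k}\Gamma_k\subset\Gamma_k$ we have $\gamma\in\Gamma^*\setminus\Lambda^*$ and $k\in I_\gamma$, so \eqref{eq:PR_hex_compact} applies. Let $\xi_0$ be a point in the relative interior of a face contained in $C_0(k,\gamma)-\bigcup_{k'\in I_\gamma,\,k'\neq k}C_\epsilon(k',\gamma)$, and suppose, towards a contradiction, that $m_k$ admits a nonzero continuous extension across $\partial\A_k$ at $\xi_0$ in the normal direction.

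First I would localize. Each $C_\epsilon(k',\gamma)$ is closed, and by definition of the removed set $\xi_0\notin C_\epsilon(k',\gamma)$ for every $k'\in I_\gamma$ with $k'\neq k$; hence there is an open neighborhood $U$ of $\xi_0$ disjoint from all of them. By \eqref{eq:def_C_epsilon} applied to each index, $\supp\!\big(m_{k'}(\cdot)\overline{m_{k'}(\cdot+\gamma)}\big)\subset C_\epsilon(k',\gamma)$, so every summand with $k'\neq k$ vanishes a.e.\ on $U$. The perfect-reconstruction identity \eqref{eq:PR_hex_compact} therefore collapses to the single relation $\overline{m_k(\xi+\gamma)}\,m_k(\xi)=0$ for a.e.\ $\xi\in U$.

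Next I would propagate this vanishing. The extension hypothesis provides a point $\xi_1\in U$, arbitrarily close to $\xi_0$, at which $m_k$ is continuous and $m_k(\xi_1)\neq0$; continuity yields an open $U'\subset U$ on which $m_k\neq0$. The isolated relation then forces $m_k(\cdot+\gamma)=0$ a.e.\ on $U'$, i.e.\ $m_k\equiv0$ a.e.\ on the open set $U'+\gamma$. The crucial geometric input is that the shift lands back in the \emph{same} cell: from $\xi_0\in C_0(k,\gamma)=\overline{\A_k}\cap(\overline{\A_k}-\gamma)$ we get $\xi_0+\gamma\in\overline{\A_k}$, and since $C_0(k,\gamma)$ and its translate $C_0(k,-\gamma)=C_0(k,\gamma)+\gamma$ are opposite parallel faces of the cell with opposite outward normals, a step in the outward normal direction at $\xi_0$ corresponds to a step \emph{into} $\A_k$ at $\xi_0+\gamma$. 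Consequently $\xi_1+\gamma$ lies in the interior of $\A_k$, so $U'+\gamma$ contains a nonempty open subset $O\subset\A_k$ on which $m_k=0$ a.e.

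Finally, $O\subset\A_k\subset\overline{\A_k}\subset\supp(m_k)$, whereas $m_k=0$ a.e.\ on the open set $O$ forces $O\cap\supp(m_k)=\emptyset$; this contradiction rules out any nonzero continuous extension and proves the claim. I expect the main obstacle to be the geometric bookkeeping in the propagation step, namely verifying the opposite-face/normal statement so that the shift by $\gamma$ sends the exterior-normal side of $C_0(k,\gamma)$ to the interior of $\A_k$ at $\xi_0+\gamma$, together with treating the lower-dimensional set of edge and corner points of $C_0(k,\gamma)$ separately. If one instead reads the hypothesis as asserting merely $m_k(\xi_0)\neq0$ at the boundary, this bookkeeping is unnecessary: $\xi_0+\gamma\in\overline{\A_k}$ already guarantees that every neighborhood of $\xi_0+\gamma$ meets the interior of $\A_k$, and the identical support contradiction applies.
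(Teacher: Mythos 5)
Your proposal is correct and follows essentially the same route as the paper's proof: localize near a point of $C_0(k,\gamma)-\bigcup_{k'\in I_\gamma,\,k'\neq k}C_\epsilon(k',\gamma)$ so that the shift-cancellation sum \eqref{eq:PR_hex_compact} reduces to the single term $\overline{m_k(\xi+\gamma)}\,m_k(\xi)$, then use the admissibility/tiling geometry to see that the $\gamma$-shift carries the outward-normal side of that face into the interior of $\A_k$, yielding a contradiction. The only (minor, and arguably welcome) difference is your endgame: you derive the contradiction from $m_k=0$ a.e.\ on an open subset of $\A_k$ versus $\overline{\A_k}\subset\supp(m_k)$, whereas the paper asserts pointwise nonvanishing of $m_k(y+\gamma)$ from $y+\gamma\in\A_k\subset\supp(m_k)$, which your a.e.\ formulation handles more carefully.
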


\begin{proof}
We prove this by contradiction. If not, then there exist $y\not\in\overline{\A_k}$ and $x\in C_0(k,\gamma) - \bigcup_{k'\in I_{\gamma} \atop k'\neq k}C_{\epsilon}(k',\gamma)$ such that $(y-x)\perp\partial \A_k$, $m_k(y)\neq 0$, and $d = |y-x|>0$ is sufficiently small. Hence the reflection point $2x-y$ of $x$ with respect to $\partial \A_k$ is inside the interior of $\A_k$ (see Figure~\ref{fig:nonextend} for the special case $k=0$ and $\gamma= (\pi,0)$.) After shifting by $\gamma$, we have
\begin{align}
    2x-y+\gamma\not \in \A_k, \quad  x+\gamma \in \partial \A_k, \quad  (2x-y+\gamma) - (x+\gamma) \perp \partial \A_k,
\end{align}
because of the admissibility condition \eqref{eq:a_k_shift_admissible} of the partition. Thus the reflection $y+\gamma$ of $2x-y+\gamma$ with respect to $\partial \A_k$ is inside the interior of $\A_k \subset \supp(m_k)$. Therefore
\begin{align*}
  m_k(y)\overline{m_k(y+\gamma)} \neq 0.
\end{align*}
Because of the continuity of $m_k$, there exists a sufficiently small $\delta$-neighborhood $B(y,\delta)$ of $y$ such that
\begin{align*}
  B(y,\delta)\cap \left(\bigcup_{k'\in I_\gamma\atop k'\neq k}C_\epsilon(k,\gamma)\right) = \emptyset, \quad \text{and} ~~ m_k(\xi)\overline{m_k(\xi+\gamma)} \neq 0, ~\forall \xi\in B(y,\delta).
\end{align*}
Thus for any $\xi\in B(y,\delta)$,
\begin{align*}
  \sum_{k'\in I_{\gamma}}m_{k'}(\xi)\overline{m_{k'}(\xi+\gamma)} = m_k(\xi)\overline{m_k(\xi+\gamma)} \neq  0.
\end{align*}
This contradicts the PR condition \eqref{eq:PR_hex_compact}.
\end{proof}

Based on Proposition~\ref{prop:bdry}, we can classify the boundaries of $\A_k$ as singular or regular as follows
\begin{definition}
  \label{def:singular_regular}
  Given $k\in\{0,\ldots, 6\}$ and $\gamma \in \Gamma_k^*\setminus \Lambda^*$ (or equivalently $0\neq \gamma \in \Gamma_k^*/\Lambda^*$), define
  \begin{align}
    D(k,\gamma) \coloneqq \lim_{\epsilon\to 0}C_0(k,\gamma) - \bigcup_{k'\in I_{\gamma} \atop k'\neq k}C_{\epsilon}(k',\gamma) = C_0(k,\gamma) - \bigcup_{k'\in I_{\gamma} \atop k'\neq k}C_{0}(k',\gamma).
  \end{align}
The singular boundary $S(k)$ and regular boundary $R(k)$ of $\A_k$ are defined as
\begin{align}
  \label{eq:bdry_singular_regular}
  S(k) \coloneqq \bigcup_{\gamma \in \Gamma_k^*\setminus \Lambda^*}D(k,\gamma), \quad R(k) \coloneqq \partial \A_k\setminus S(k).
\end{align}
\end{definition}

Figure~\ref{fig:bdry_classify} shows the boundary classification of the hexagonal wavelets with six directions. Note that all singular boundaries are ``double'' singular boundaries in the sense that $m_k$ cannot extend beyond those boundaries in either direction (this will be an easy corollary of Lemma~\ref{lemma:common_regular} in Section~\ref{sec:bdry_smoothing}.)

\subsubsection{Filter smoothing across regular boundaries}
\label{sec:bdry_smoothing}

We discuss how to continuously extend $m_k(\xi)$ across regular boundaries without violating the PR condition \eqref{eq:PR_hex_identity_sum} \eqref{eq:PR_hex_shift_cancel}. The following lemma characterizes the common regular boundaries of two adjacent domains $\A_{k_1}$ and $\A_{k_2}$.

\begin{lemma}
  \label{lemma:common_regular}
  Let $\A_{k_1}$ and $\A_{k_2}$ be adjacent domains with a common boundary. We have

\noindent {\normalfont(a)}
  \begin{align}
    \label{eq:common_regular}
    R(k_1)\cap R(k_2) = \bigcup_{\gamma\in (\Gamma_{k_1}^*\cap \Gamma_{k_2}^*)\setminus\Lambda^*}C_0(k_1,\gamma)\cap C_0(k_2,\gamma)
  \end{align}
  holds up to a discrete set of ``corner'' points that live on the boundaries of  more than two $\A_k$.

\noindent {\normalfont(b)} If $\gamma, \gamma'\in\Gamma_{k}^*\setminus\Lambda^*$ and $\gamma-\gamma'\not \in \Lambda^*$, then
    \begin{align}
      \label{eq:C-distinct}
      C_0(k,\gamma) \cap C_0(k,\gamma') = \emptyset.
    \end{align}
Thus in particular the right hand side of \eqref{eq:common_regular} is the disjoint union of $C_0(k_1,\gamma)\cap C_0(k_2,\gamma)$ for $0\neq \gamma \in (\Gamma_{k_1}^*\cap\Gamma_{k_2}^*)/\Lambda^*$.
\end{lemma}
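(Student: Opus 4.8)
The whole argument rests on the tiling form of admissibility, \eqref{eq:a_k_shift_admissible}: the family $\{\overline{\A_k}+\gamma\}_{\gamma\in\Gamma_k^*/\Lambda^*}$ is a partition of $\R^2$ into tiles with pairwise disjoint interiors. The one fact I would use repeatedly is that $\xi\in C_0(k,\gamma)$ is equivalent to $\xi\in\overline{\A_k}$ and $\xi+\gamma\in\overline{\A_k}$, i.e. $\xi$ lies on the interface between the two tiles $\overline{\A_k}$ and $\overline{\A_k}-\gamma$. Since (a) invokes (b), I would prove (b) first.

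For (b), suppose $\xi\in C_0(k,\gamma)\cap C_0(k,\gamma')$. Then $\xi$, $\xi+\gamma$, $\xi+\gamma'$ all lie in $\overline{\A_k}$, i.e.
\[
  \xi\in\overline{\A_k}\cap(\overline{\A_k}-\gamma)\cap(\overline{\A_k}-\gamma').
\]
Under the hypotheses $\gamma,\gamma'\notin\Lambda^*$ and $\gamma-\gamma'\notin\Lambda^*$, the cosets $0$, $-\gamma$, $-\gamma'$ are three distinct elements of $\Gamma_k^*/\Lambda^*$, so the three sets above are three distinct tiles of \eqref{eq:a_k_shift_admissible}. Each $C_0(k,\cdot)$ is a $1$-dimensional interface between two tiles, and two tiles with disjoint interiors meet only along such an interface; hence a point lying in three distinct tiles must be an endpoint shared by two interfaces, i.e. one of the isolated ``corner'' points lying on the boundary of more than two of the $\A_j$. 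This shows $C_0(k,\gamma)\cap C_0(k,\gamma')$ contains no point off the discrete corner set, which is exactly the disjointness (on the relative interiors of the edges) needed for the ``disjoint union'' assertion of the lemma. I expect this step to be the main obstacle: to turn the heuristic into a proof one must enumerate the finitely many edges of the fundamental fan $A_k$ modulo $\Lambda^*$, attach to each edge the lattice vector $\gamma\in\Gamma_k^*$ pairing it with its neighbouring tile, and check that edges carrying incongruent (mod $\Lambda^*$) labels share at most their common vertices. This is where the explicit sublattices \eqref{eq:hex_lattice_hp_1}--\eqref{eq:hex_lattice_hp_2} and the hexagonal geometry enter.

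For (a) I would prove the two inclusions. For ``$\supseteq$'', take $\xi\in C_0(k_1,\gamma)\cap C_0(k_2,\gamma)$ with $\gamma\in(\Gamma_{k_1}^*\cap\Gamma_{k_2}^*)\setminus\Lambda^*$; I show $\xi\in R(k_1)$, the case $R(k_2)$ being symmetric. Since $\gamma\in\Gamma_{k_2}^*$ we have $k_2\in I_\gamma$, and $\xi\in C_0(k_2,\gamma)$ witnesses $\xi\notin D(k_1,\gamma)$. For any $\gamma''\in\Gamma_{k_1}^*\setminus\Lambda^*$ with $\gamma''\not\equiv\gamma\pmod{\Lambda^*}$, part (b) gives $\xi\notin C_0(k_1,\gamma'')\supseteq D(k_1,\gamma'')$; and for $\gamma''\equiv\gamma$ the $\Lambda^*$-periodicity of $\A_{k_1}$ gives $C_0(k_1,\gamma'')=C_0(k_1,\gamma)$ and $D(k_1,\gamma'')=D(k_1,\gamma)$, so again $\xi\notin D(k_1,\gamma'')$. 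Hence $\xi\notin S(k_1)$, i.e. $\xi\in R(k_1)$. For ``$\subseteq$'', take a non-corner point $\xi\in R(k_1)\cap R(k_2)$ on the common boundary. As $\xi\in\partial\A_{k_1}$ it lies in some $C_0(k_1,\gamma)$, and regularity $\xi\notin D(k_1,\gamma)$ forces $\xi\in C_0(k',\gamma)$ for some $k'\in I_\gamma$ with $k'\neq k_1$; in particular $\xi\in\overline{\A_{k'}}$. But the $\{\A_j\}_{j=0}^6$ themselves partition $\R^2$, and an interior point of the edge shared by $\A_{k_1}$ and $\A_{k_2}$ lies in the closure of exactly these two domains; therefore $k'=k_2$, so $\gamma\in\Gamma_{k_2}^*$ and $\xi\in C_0(k_1,\gamma)\cap C_0(k_2,\gamma)$ with $\gamma\in(\Gamma_{k_1}^*\cap\Gamma_{k_2}^*)\setminus\Lambda^*$. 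The discrete corner set excluded here is precisely the ``up to corner points'' in the statement.

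The final ``disjoint union'' claim is then immediate from (b): for $\gamma\not\equiv\gamma'\pmod{\Lambda^*}$ the terms $C_0(k_1,\gamma)\cap C_0(k_2,\gamma)$ and $C_0(k_1,\gamma')\cap C_0(k_2,\gamma')$ are contained in the disjoint (off corners) sets $C_0(k_1,\gamma)$ and $C_0(k_1,\gamma')$. The two geometric inputs I would isolate as preliminary facts are the disjoint-interior tiling property and the ``exactly two domains meet at an interior edge point'' property; granting these, all the set manipulations are routine and the only genuinely delicate part is the edge-labelling verification in (b).
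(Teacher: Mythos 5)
Your proof of part (a) follows the paper's argument essentially verbatim: for the forward inclusion you use admissibility to produce a $\gamma$ with $\xi\in C_0(k_1,\gamma)$, regularity to force $\xi\in C_0(k',\gamma)$ for some $k'\in I_\gamma$ with $k'\neq k_1$, and the non-corner hypothesis to conclude $k'=k_2$; the reverse inclusion, which the paper dismisses as ``trivial from the definition,'' you correctly reduce to part (b) by ruling out $\xi\in D(k_1,\gamma'')$ coset by coset. The only real divergence is in part (b), where the paper offers nothing beyond ``this is an easy result of the admissibility condition,'' while you give the substantive tiling argument (three distinct $\Gamma_k^*/\Lambda^*$-translates of $\overline{\A_k}$ can only meet at isolated vertices) and honestly flag that this yields disjointness only off a discrete corner set rather than the literal emptiness $C_0(k,\gamma)\cap C_0(k,\gamma')=\emptyset$ asserted in \eqref{eq:C-distinct}. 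That caveat is legitimate: for a generic tiling by translates of a single closed tile, three distinct translates do meet at vertices (four do, for the unit-square tiling of $\R^2$), so the exact statement genuinely requires the finite edge-labelling verification you describe for the specific hexagonal fans --- a check the paper does not carry out either. Since everything downstream (the decomposition of the common regular boundary into the sets $E(k_1,k_2,\gamma)$ and the subsequent filter smoothing on $F_\epsilon$) uses these sets only up to Lebesgue-null modifications, the ``off corners'' version you actually establish suffices for the rest of the construction, so I would not count this as a gap in your argument so much as a point where you are more careful than the source.
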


\begin{proof}
(a) Given $x\in R(k_1)\cap R(k_2)$, we have $x\in\partial \A_{k_1}\setminus S(k_1)$ by  Definition~\ref{def:singular_regular}. The admissibility of $(A_k)_{0\le k\le 6}$ implies
  \begin{align*}
    \exists  \gamma\in\Gamma_{k_1}^*\setminus\Lambda^*, \quad \text{s.t.} ~~x\in C_0(k_1,\gamma).
  \end{align*}
Since $x\not \in S(k_1)$, we have, in particular, 
\begin{align*}
  x\not\in D(k_1,\gamma) = C_0(k_1,\gamma)-\bigcup_{k'\in I_\gamma \atop k'\neq k_1}C_0(k',\gamma).
\end{align*}
This implies $x\in C_0(k',\gamma)\subset \partial\A_{k'}$ for some $k'\in I_\gamma, k'\neq k_1$. This is only possible when $k' = k_2$ if $x$ does not belong to the boundary of more than two $\A_k$. Therefore $x\in C_0(k_2,\gamma)$, and $k_2\in I_\gamma$, i.e., $\gamma \in\Gamma_{k_2}^*$. We thus have
\begin{align*}
  x \in \bigcup_{\gamma\in (\Gamma_{k_1}^*\cap \Gamma_{k_2}^*)\setminus\Lambda^*}C_0(k_1,\gamma)\cap C_0(k_2,\gamma) 
\end{align*}
The other direction is trivial from Definition~\ref{def:singular_regular}.

(b) This is an easy result of the admissibility condition \eqref{eq:a_k_shift_admissible}.
\end{proof}

Thus the common regular boundary of $\A_{k_1}$ and $\A_{k_2}$ consists of
\begin{align}
  \label{eq:def_E}
  E(k_1,k_2,\gamma) \coloneqq C_0(k_1,\gamma)\cap C_0(k_2,\gamma), \quad \gamma \in (\Gamma_{k_1}^*\cap \Gamma_{k_2}^*)\setminus\Lambda^*.
\end{align}
The following lemma shows that $E(k_1,k_2,\gamma)$ can be paired with $E(k_1,k_2,-\gamma)$ according to the shift $\pm \gamma$.

\begin{lemma}
  Two common regular boundaries, $E(k_1,k_2,\gamma)$ and $E(k_1,k_2,-\gamma)$, of $\A_{k_1}$ and $\A_{k_2}$ can be paired according to the shift $\pm \gamma$:
  \begin{align}
    E(k_1,k_2,-\gamma) = E(k_1,k_2,\gamma) + \gamma.
  \end{align}
Moreover, if $\gamma\neq -\gamma$, i.e., $2\gamma \not\in\Lambda^*$, then
\begin{align}
  \label{eq:E-distinct}
  E(k_1,k_2,-\gamma) \cap E(k_1,k_2,\gamma) = \emptyset
\end{align}
\end{lemma}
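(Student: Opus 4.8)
The plan is to prove both assertions by unfolding the definitions $E(k_1,k_2,\gamma) = C_0(k_1,\gamma)\cap C_0(k_2,\gamma)$ and $C_0(k,\gamma) = \overline{\A_k}\cap(\overline{\A_k}-\gamma)$, and then combining a direct translation identity with part (b) of Lemma~\ref{lemma:common_regular}. No deep new input is required; everything reduces to set algebra plus one citation.

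For the pairing identity, first I would expand both sides into four-fold intersections: $E(k_1,k_2,\gamma) = \overline{\A_{k_1}}\cap\overline{\A_{k_2}}\cap(\overline{\A_{k_1}}-\gamma)\cap(\overline{\A_{k_2}}-\gamma)$, and likewise $E(k_1,k_2,-\gamma) = \overline{\A_{k_1}}\cap\overline{\A_{k_2}}\cap(\overline{\A_{k_1}}+\gamma)\cap(\overline{\A_{k_2}}+\gamma)$. Using the elementary fact that translation commutes with intersection, $(U\cap V)+\gamma = (U+\gamma)\cap(V+\gamma)$, I would translate the first expression by $\gamma$: the two factors $\overline{\A_{k_i}}-\gamma$ become $\overline{\A_{k_i}}$, while the two factors $\overline{\A_{k_i}}$ become $\overline{\A_{k_i}}+\gamma$. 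The result is term-for-term the expression for $E(k_1,k_2,-\gamma)$, which gives $E(k_1,k_2,-\gamma) = E(k_1,k_2,\gamma)+\gamma$. This step is purely algebraic and uses no $\Lambda^*$-periodicity; I would only note in passing that $-\gamma\in(\Gamma_{k_1}^*\cap\Gamma_{k_2}^*)\setminus\Lambda^*$ whenever $\gamma$ is, because each $\Gamma_{k_i}^*$ is a group and $-\gamma\in\Lambda^*$ would force $\gamma\in\Lambda^*$, so both indexed sets are legitimately defined.

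For the disjointness claim under the hypothesis $2\gamma\notin\Lambda^*$, I would invoke Lemma~\ref{lemma:common_regular}(b) directly, taking $k=k_1$ and $\gamma'=-\gamma$. Since $\gamma,-\gamma\in\Gamma_{k_1}^*\setminus\Lambda^*$ and $\gamma-\gamma' = 2\gamma\notin\Lambda^*$, that lemma yields $C_0(k_1,\gamma)\cap C_0(k_1,-\gamma) = \emptyset$. By definition $E(k_1,k_2,\gamma)\subset C_0(k_1,\gamma)$ and $E(k_1,k_2,-\gamma)\subset C_0(k_1,-\gamma)$, so $E(k_1,k_2,\gamma)\cap E(k_1,k_2,-\gamma)$ is contained in this empty intersection and is therefore empty.

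I do not anticipate a genuine obstacle: once the definitions are expanded, the pairing is a single translation identity and the disjointness is an immediate corollary of the already-established Lemma~\ref{lemma:common_regular}(b). The only point demanding care is bookkeeping — matching the four factors correctly under the shift by $\gamma$, and confirming that $-\gamma$ lives in the same intersected reciprocal lattice $\Gamma_{k_1}^*\cap\Gamma_{k_2}^*$ as $\gamma$ so that the index $-\gamma$ is admissible on both sides.
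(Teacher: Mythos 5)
Your proof is correct and follows essentially the same route as the paper: both expand $E(k_1,k_2,\pm\gamma)$ into the four-fold intersections of $\overline{\A_{k_i}}$ and their $\mp\gamma$-translates, observe that shifting by $\gamma$ matches the terms, and derive disjointness as an immediate corollary of \eqref{eq:C-distinct}. Your version merely spells out the translation bookkeeping and the admissibility of the index $-\gamma$ a bit more explicitly than the paper does.
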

\begin{proof}
  This can be easily proved by the definition \eqref{eq:def_E}:
  \begin{align*}
    E(k_1,k_2,\gamma) &= C_0(k_1,\gamma)\cap C_0(k_2,\gamma) = \left(\overline{\A_{k_1}} \cap \left(\overline{\A_{k_1}} -\gamma\right)\right) \cap \left(\overline{\A_{k_2}} \cap \left(\overline{\A_{k_2}} -\gamma\right)\right).\\
    E(k_1,k_2,-\gamma) &= C_0(k_1,-\gamma)\cap C_0(k_2,-\gamma) = \left(\overline{\A_{k_1}} \cap \left(\overline{\A_{k_1}} +\gamma\right)\right) \cap \left(\overline{\A_{k_2}} \cap \left(\overline{\A_{k_2}} +\gamma\right)\right).
  \end{align*}
Hence we have $E(k_1,k_2,-\gamma) = E(k_1,k_2,\gamma) + \gamma$. The fact that $E(k_1,k_2,-\gamma) \cap E(k_1,k_2,\gamma) =\emptyset$ when $\gamma \neq -\gamma$ is an easy corollary of \eqref{eq:C-distinct}.
\end{proof}

\begin{SCfigure}[]
  \centering
  \includegraphics[width=.5\textwidth]{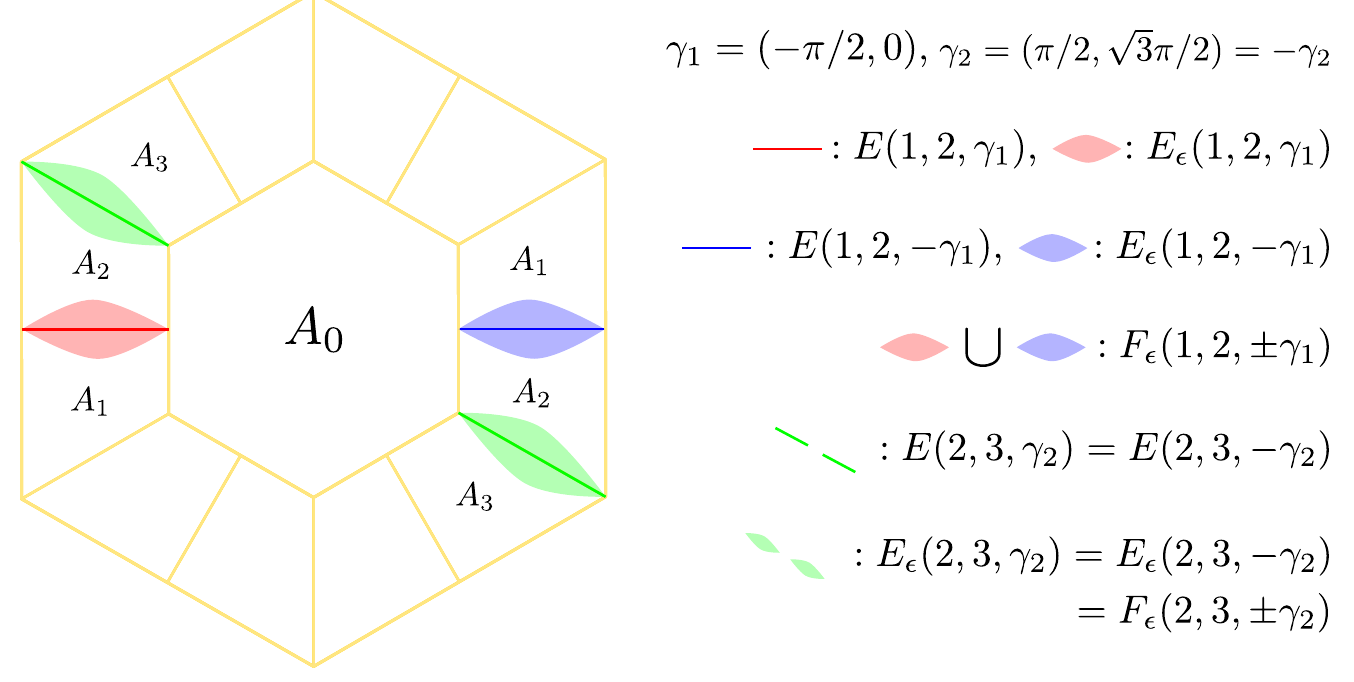}
  \caption{ \small Two particular cases for  $E(k_1,k_2,\gamma)$,  $E_\epsilon(k_1,k_2,\gamma)$, and  $F_\epsilon(k_1,k_2,\pm\gamma)$.\vspace{.2em}\\
Case (1): $(k_1,k_2,\gamma_1) = (1,2,(-\pi/2,0))$. In this case $\left|E_\epsilon(1,2,\gamma_1)\cap E_{\epsilon}(1,2,-\gamma_1)\right|=0$, for $\gamma_1 \neq -\gamma_1$. $F_\epsilon(1,2,\pm\gamma_1) = \Omega \cup (\Omega+\gamma)$ is the disjoint union of $\Omega$ and $\Omega+\gamma_1$, where $\Omega = E_\epsilon(1,2,\gamma_1)$.\vspace{.2em}\\
Case (2): $(k_1,k_2,\gamma_2) = (2,3,(\pi/2,\sqrt{3}\pi/2))$. In this case $E_\epsilon(2,3,\gamma_2) =  E_{\epsilon}(2,3,-\gamma_2)=F_\epsilon(2,,\pm\gamma_2)$, for $\gamma_2 = -\gamma_2$.  $F_\epsilon(2,3,\pm\gamma_2)$ is still the disjoint union of $\Omega$ and $\Omega+\gamma_2$, where $\Omega$ is the upper-left (or lower-right) green shaded region.
  }
  \label{fig:bdry_E_F}
\end{SCfigure}

Define $F_\epsilon(k_1,k_2,\pm \gamma)$ as the union of the pair $E_\epsilon(k_1,k_2,\pm \gamma)$
\begin{align}
  \label{eq:def_F}
  F_\epsilon(k_1,k_2,\pm \gamma)\coloneqq E_\epsilon(k_1,k_2,\gamma) \cup E_\epsilon(k_1,k_2,-\gamma),
\end{align}
where $E_\epsilon(k_1,k_2,\gamma)$ is an $\epsilon$-neighborhood of $E(k_1,k_2,\gamma)$ satisfying
\begin{enumerate}
\item $\text{dist}(\xi,E(k_1,k_2,\gamma))\le \epsilon,~ \forall \xi \in E_\epsilon(k_1,k_2,\gamma)$.
\item $E_\epsilon(k_1,k_2,\gamma)$ and $E_\epsilon(k_1,k_2,-\gamma)$ satisfy the same shifting relation
  \begin{align}
    \label{eq:E_epsilon_shift}
    E_\epsilon(k_1,k_2,-\gamma) = E_\epsilon(k_1,k_2,\gamma) + \gamma    
  \end{align}  
\item For every distinct triple $(k_1,k_2,\gamma)\neq (\widetilde{k_1},\widetilde{k_2},\widetilde{\gamma})$,
  \begin{align}
    \label{eq:E-epsilon-distinct}
    \left|E_\epsilon(k_1,k_2,\gamma) \cap E_\epsilon(\widetilde{k_1},\widetilde{k_2},\widetilde{\gamma})\right| = 0,
  \end{align}
where $\left|\cdot\right|$ is the Lebesgue measure.
\end{enumerate}
See Figure~\ref{fig:bdry_E_F} for a visual illustration of the sets $E(k_1,k_2,\gamma), E_\epsilon(k_1,k_2,\gamma)$ and $F_\epsilon(k_1,k_2,\pm\gamma)$. The following proposition simplifies the PR condition on $m_{k_1}$ and $m_{k_2}$ if only the values of these two filters are changed on $F_\epsilon(k_1,k_2,\pm \gamma)$.

\begin{prop}
  \label{prop:change_m1_m2}
  Suppose $\left(m_k^{(0)}\right)_{0\le k\le 6}$ satisfies the PR condition \eqref{eq:PR_hex_identity_sum} and \eqref{eq:PR_hex_compact}, and
  \begin{align}
    \label{eq:maximum_supp}
    \supp(m_k^{(0)}) \subset \overline{\A_k}\cup \left(\bigcup_{k' \text{s.t.}\atop R(k)\cap R(k')\neq \emptyset}\bigcup_{\gamma \in \left(\Gamma_k^* \cap \Gamma_{k'}^*\right)\setminus \Lambda^*}E_\epsilon(k',k,\gamma)  \right).
  \end{align}
In particular, \eqref{eq:maximum_supp} implies $m_k^{(0)}$ has (possibly) been extended across some regular boundaries of $\A_k$, and at most two distinct filters $m_k^{(0)}$, $m_{k'}^{(0)}$ can take nonzero values for $a.e. ~\xi\in\R^2$ (because of \eqref{eq:E-epsilon-distinct}).

Suppose the new filter bank $\left( m_k \right)_{0\le k\le 6}$ differs from $\left(m_k^{(0)}\right)_{0\le k\le 6}$ only on $F_\epsilon(k_1,k_2,\pm \gamma)$ for the filters $m_{k_1}, m_{k_2}$, i.e.,
\begin{align}
  \label{eq:change_m1_m2}
  m_k(\xi) \neq m_k^{(0)}(\xi) \implies \xi\in F_\epsilon(k_1,k_2,\pm \gamma), ~\text{and} ~~k = k_1 ~\text{or} ~k_2,
\end{align}
then $\left( m_k \right)_{0\le k\le 6}$ satisfies the PR condition if and only if
\begin{align}
  \label{eq:new_PR}
  \left\{
  \begin{aligned}
    &\left|m_{k_1}(\xi)\right|^2 + \left|m_{k_2}(\xi)\right|^2  = 1, \quad &&a.e. ~\xi \in F_\epsilon(k_1,k_2,\pm\gamma)\\
    & m_{k_1}(\xi)\overline{m_{k_1}(\xi+\gamma)} + m_{k_2}(\xi)\overline{m_{k_2}(\xi+\gamma)} = 0, \quad && a.e. ~\xi \in E_{\epsilon}(k_1,k_2,\gamma)\\
    & m_{k_1}(\xi)\overline{m_{k_1}(\xi-\gamma)} + m_{k_2}(\xi)\overline{m_{k_2}(\xi-\gamma)} = 0, \quad && a.e. ~\xi \in E_{\epsilon}(k_1,k_2,-\gamma)
  \end{aligned}\right.
\end{align}
\end{prop}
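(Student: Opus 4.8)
The plan is to exploit the splitting of the PR condition furnished by Proposition~\ref{prop:PR_hex} into the \emph{local} identity--summation condition \eqref{eq:PR_hex_identity_sum} and the \emph{shift}--cancellation condition \eqref{eq:PR_hex_compact}, and to argue by comparison with the reference bank $\left(m_k^{(0)}\right)$, which is already PR. Since $(m_k)$ and $\left(m_k^{(0)}\right)$ agree except on $F_\epsilon(k_1,k_2,\pm\gamma)$ and differ only in the two entries $m_{k_1},m_{k_2}$ there (by \eqref{eq:change_m1_m2}), every PR equation that does not see these changes is inherited verbatim from $\left(m_k^{(0)}\right)$. The entire task is therefore to pin down exactly which equations do see the change and to show that they collapse to \eqref{eq:new_PR}.

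First I would dispose of \eqref{eq:PR_hex_identity_sum}. Because it involves only the values $m_k(\xi)$ at a single point, it is unaffected off $F_\epsilon$ (where $(m_k)=\left(m_k^{(0)}\right)$) and continues to hold there. On $F_\epsilon$ I would invoke the support hypothesis \eqref{eq:maximum_supp} together with the essential disjointness \eqref{eq:E-epsilon-distinct}: these force every filter other than $m_{k_1},m_{k_2}$ to vanish for a.e. $\xi\in F_\epsilon$, so that \eqref{eq:PR_hex_identity_sum} reduces exactly to the first line of \eqref{eq:new_PR}.

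The substantive part is the shift condition \eqref{eq:PR_hex_compact}. For a fixed $\gamma'\in\Gamma^*\setminus\Lambda^*$, the difference between the sum for $(m_k)$ and that for $\left(m_k^{(0)}\right)$ only involves indices $k\in\{k_1,k_2\}\cap I_{\gamma'}$, and a summand $m_k(\xi)\overline{m_k(\xi+\gamma')}$ can differ from its reference value only when $m_k$ was altered at $\xi$ or at $\xi+\gamma'$, i.e. when $\xi\in F_\epsilon$ or $\xi+\gamma'\in F_\epsilon$. The key geometric claim I would establish is that, for $\xi\in F_\epsilon$ and $\epsilon$ small, the product $m_{k_i}(\xi)\overline{m_{k_i}(\xi+\gamma')}$ can be nonzero only for $\gamma'\in\{\gamma,-\gamma\}$: since $E_\epsilon(k_1,k_2,\gamma)$ lies inside $C_\epsilon(k_i,\gamma)$ by \eqref{eq:def_C_epsilon}, and the sets $C_0(k_i,\gamma')$ are pairwise disjoint for distinct $\gamma'$ by \eqref{eq:C-distinct}, no other lattice shift can carry a point of $F_\epsilon$ back into $\supp(m_{k_i})$ once $\epsilon$ is small enough. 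Using the pairing \eqref{eq:E_epsilon_shift}, I would then verify that the only surviving equations are shift-$\gamma$ evaluated on $E_\epsilon(k_1,k_2,\gamma)$ and shift-$(-\gamma)$ evaluated on $E_\epsilon(k_1,k_2,-\gamma)$; on each of these \eqref{eq:E-epsilon-distinct} again annihilates every filter but $m_{k_1},m_{k_2}$, so that \eqref{eq:PR_hex_compact} becomes precisely the second and third lines of \eqref{eq:new_PR} (which are in fact complex conjugates of one another under $\xi\mapsto\xi+\gamma$, consistent with the pairing).

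I expect the main obstacle to be the bookkeeping in the last step: verifying rigorously the geometric claim that a point of $F_\epsilon$ cannot be returned into $\supp(m_{k_i})$ by any self-shift other than $\pm\gamma$, and that the ``cross'' cases ($\xi\notin F_\epsilon$ but $\xi+\gamma'\in F_\epsilon$) either contradict $\xi\notin F_\epsilon$ or force the relevant shifted value to vanish. This is where the smallness of $\epsilon$ and the disjointness properties \eqref{eq:C-distinct} and \eqref{eq:E-epsilon-distinct} must be used with care; once they are in place, assembling the four statements into the claimed ``if and only if'' is routine.
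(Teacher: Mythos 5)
Your proposal is correct and follows essentially the same route as the paper's proof: split the PR condition into the pointwise identity--summation and the shift--cancellation parts, inherit every equation that does not see the modification from the reference bank $\bigl(m_k^{(0)}\bigr)$, and use the support hypothesis \eqref{eq:maximum_supp}, the disjointness properties \eqref{eq:C-distinct}, \eqref{eq:E-epsilon-distinct} and admissibility to show that the only shifts surviving on $F_\epsilon(k_1,k_2,\pm\gamma)$ are $\pm\gamma$ acting on $m_{k_1},m_{k_2}$, which collapses the PR condition to \eqref{eq:new_PR}. The ``cross'' case you flag as the main obstacle is exactly the step the paper handles by projecting $\xi$ onto $\partial\A_{k_1}$ and invoking admissibility to force $\nu=-\gamma$, hence $\xi\in F_\epsilon(k_1,k_2,\pm\gamma)$, a contradiction.
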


\begin{proof}
  We first verify the PR condition \eqref{eq:PR_hex_identity_sum}\eqref{eq:PR_hex_compact} when $\xi \not \in F_\epsilon(k_1,k_2,\pm\gamma)$.
  \begin{itemize}
  \item The identity summation condition \eqref{eq:PR_hex_identity_sum} always holds for $\xi \not \in F_\epsilon(k_1,k_2,\pm\gamma)$, for \eqref{eq:change_m1_m2} implies
    \begin{align*}
      \sum_{k=0}^6\left|m_k(\xi)\right|^2=\sum_{k=0}^6\left|m_k^{(0)}(\xi)\right|^2=1
    \end{align*}
  \item If the shift cancellation condition \eqref{eq:PR_hex_compact} fails to hold, i.e.,
    \begin{align*}
      \sum_{k\in I_{\nu}}\overline{m_k(\xi+\nu)}m_k(\xi) \neq 0 = \sum_{k\in I_{\nu}}\overline{m_k^{(0)}(\xi+\nu)}m_k^{(0)}(\xi)   
    \end{align*}
for some $0\neq \nu \in \Gamma^*/\Lambda^*$, then
\begin{align*}
  \left\{
  \begin{aligned}
    &\xi+\nu\in F_\epsilon(k_1,k_2,\pm\gamma) = E_\epsilon(k_1,k_2,\gamma)\cup E_\epsilon(k_1,k_2,-\gamma)\\
    &m_{k_j}(\xi)\neq 0\\
    &0\neq\nu\in \Gamma_{k_j}^*/\Lambda^*
  \end{aligned}\right.
\end{align*}
must hold simultaneously for at least one of the $j\in\{1,2\}$. Assume without loss of generality that 
    \begin{align*}
      \xi+\nu\in E_\epsilon(k_1,k_2,\gamma), \quad m_{k_1}(\xi)\neq 0, \quad \text{and} ~~0\neq\nu\in \Gamma_{k_1}^*/\Lambda^*.
    \end{align*}
The admissibility condition \eqref{eq:a_k_shift_admissible} combined with the maximum support of $m_{k_1}$ \eqref{eq:maximum_supp} implies that $\text{dist}(\xi,\partial\A_{k_1})\le \epsilon$. Let $\xi'\in\partial\A_{k_1}$ be the projection of $\xi$ in $\partial\A_{k_1}$, i.e.,
\begin{align*}
  \xi - \xi'\perp\partial \A_{k_1}, \quad \|\xi-\xi'\|\le \epsilon.
\end{align*}
Then $\xi'+\nu\in E(k_1,k_2,\gamma)\subset E_\epsilon(k_1,k_2,\gamma)$. The admissibility condition implies that this can hold only if $\nu = -\gamma$, and thus
\begin{align*}
 \xi\in E_{\epsilon}(k_1,k_2,\gamma)-\nu = E_{\epsilon}(k_1,k_2,\gamma)+\gamma = E_{\epsilon}(k_1,k_2,-\gamma) \subset F_{\epsilon}(k_1,k_2,\pm\gamma)
\end{align*}
This contradicts the  assumption  $\xi\not \in F_{\epsilon}(k_1,k_2,\pm\gamma)$.
  \end{itemize}
We next show that the PR condition holds on $F_{\epsilon}(k_1,k_2,\pm\gamma)$ when \eqref{eq:new_PR} is satisfied.
\begin{itemize}
\item Since $m_{k_1}$ and $m_{k_2}$ are the only two filters that can take non-zero values on $F_{\epsilon}(k_1,k_2,\pm\gamma)$, we have
  \begin{align*}
    \sum_{k=0}^6\left|m_k(\xi)\right|^2 = 1 \iff \left|m_{k_1}(\xi)\right|^2 + \left|m_{k_2}(\xi)\right|^2  = 1, \quad \forall~\xi \in F_\epsilon(k_1,k_2,\pm \gamma).
  \end{align*}
\item When $\xi\in E_\epsilon(k_1,k_2,\gamma)$, the admissibility condition \eqref{eq:a_k_shift_admissible} and \eqref{eq:maximum_supp} imply that $m_k(\xi)\overline{m_k(\xi+\nu)}\neq 0$  for some $\nu\not \in\Lambda^*$ only if $k=k_1$ or $k_2$, and $\nu= \gamma$. Hence
  \begin{align*}
    \sum_{k\in I_{\nu}}\overline{m_k(\xi+\nu)}m_k(\xi) = 0,~\forall \nu \in \Gamma^*\setminus \Lambda^* \iff m_{k_1}(\xi)\overline{m_{k_1}(\xi+\gamma)} + m_{k_2}(\xi)\overline{m_{k_2}(\xi+\gamma)} = 0
  \end{align*}
\item Similarly we can show that the shift-cancellation condition for $\xi\in E_\epsilon(k_1,k_2,-\gamma)$ is equivalent to the last equation of \eqref{eq:new_PR}.
\end{itemize}
\end{proof}

Therefore, in order to extend $m_{k_1}$ and $m_{k_2}$ across their common regular boundary $E(k_1,k_2,\gamma)\cup E(k_1,k_2,-\gamma)$, we only need to ensure \eqref{eq:new_PR} holds on $F_\epsilon(k_1,k_2,\pm\gamma)$ while leaving other filters $(m_k)_{k \neq k_1, k_2}$ unchanged. Note that whether $\gamma$ and $-\gamma$ are identical in $(\Gamma_{k_1}^*\cap \Gamma_{k_2}^*)/\Lambda^*$, $F_\epsilon(k_1,k_2,\pm\gamma)$ is always the (almost) disjoint union of $\Omega$ and $\Omega + \gamma$, since
\begin{itemize}
\item if $\gamma\neq \gamma$, then $ F_\epsilon(k_1,k_2,\pm \gamma) =  E_\epsilon(k_1,k_2,\gamma) \cup E_\epsilon(k_1,k_2,-\gamma) \eqqcolon \Omega \cup (\Omega +\gamma)$, and $\left|\Omega \cap (\Omega+\gamma)\right| = \left| E_\epsilon(k_1,k_2,\gamma)\cap E_\epsilon(k_1,k_2,-\gamma) \right| = 0$ because of \eqref{eq:E-epsilon-distinct}. 
\item if $\gamma = -\gamma$, i.e., $2\gamma \in \Lambda^*$, then \eqref{eq:E_epsilon_shift} implies $F_\epsilon(k_1,k_2,\pm\gamma) = E_{\epsilon}(k_1,k_2,\gamma)= E_{\epsilon}(k_1,k_2,-\gamma)$ is still the union of its two disjoint subsets $\Omega$ and $\Omega+\gamma$ (see the green shaded region in Figure~\ref{fig:bdry_E_F} representing the special case $F_\epsilon(2,3,\pm(\pi/2,\sqrt{3}\pi/2))$.)
\end{itemize}
The following proposition shows that after choosing a proper set of spatial translations $\left\{\eta_k\right\}_{0\le k\le 6}\subset\Lambda$, one needs only a mild restriction on the moduli $\left|m_{k_1}(\xi)\right|, \left|m_{k_2}(\xi)\right|$  for the filters to satisfy the PR condition \eqref{eq:new_PR} when extending $m_{k_1}$ and $m_{k_2}$ onto $F_\epsilon(k_1,k_2,\pm\gamma) =\Omega\cup (\Omega+\gamma)$.

\begin{prop}
\label{prop:omega_eta}
  Let $\M_{k_1}$ and $\M_{k_2}$ be real-valued $\Lambda^*$-periodic functions defined on $\Omega \cup (\Omega+\gamma)$ satisfying
  \begin{align}
    \left\{
    \begin{aligned}
      &\M_{k_1}(\xi), \M_{k_2}(\xi)\ge 0,~~ \M_{k_1}^2(\xi) + \M_{k_2}^2(\xi) = 1,\\
      & \M_{k_1}(\xi+\gamma) = \M_{k_2}(\xi), ~~ \M_{k_2}(\xi+\gamma) = \M_{k_1}(\xi),
    \end{aligned}\right. \quad\quad \forall \xi\in \Omega.
  \end{align}
Define $m_{k_i}(\xi)  \coloneqq \M_{k_i}(\xi)e^{i\left<\xi,\eta_{k_i}\right>}$, where $\eta_{k_i}\in\Lambda$ satisfies $e^{i\left<\gamma,\eta_{k_1}-\eta_{k_2}\right>} = -1$, then
\begin{align}
  \left\{
  \begin{aligned}
        &\left|m_{k_1}(\xi)\right|^2 + \left|m_{k_2}(\xi)\right|^2  = 1, \quad && \forall \xi \in \Omega\cup (\Omega+\gamma) \\
    & m_{k_1}(\xi)\overline{m_{k_1}(\xi+\gamma)} + m_{k_2}(\xi)\overline{m_{k_2}(\xi+\gamma)} = 0, \quad && \forall \xi\in \Omega\\
    & m_{k_1}(\xi)\overline{m_{k_1}(\xi-\gamma)} + m_{k_2}(\xi)\overline{m_{k_2}(\xi-\gamma)} = 0, \quad && \forall \xi \in \Omega+\gamma
  \end{aligned}\right.
\end{align}
\end{prop}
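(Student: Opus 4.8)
The plan is to verify the three displayed identities separately, the entire argument reducing to bookkeeping of the unimodular phase factors $e^{i\langle\xi,\eta_{k_i}\rangle}$ together with the nonnegativity and the swap symmetry of $\M_{k_1},\M_{k_2}$. The one substantive ingredient is the hypothesis $e^{i\langle\gamma,\eta_{k_1}-\eta_{k_2}\rangle}=-1$: this is precisely what converts the sum of the two cross terms into a difference, forcing the shift-cancellation rather than a reinforcement. Everything else is algebra, and I expect no genuine obstacle.

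For the identity-summation relation (first line) I would note that $|m_{k_i}(\xi)|=\M_{k_i}(\xi)$, since $\M_{k_i}\ge 0$ and $|e^{i\langle\xi,\eta_{k_i}\rangle}|=1$; hence $|m_{k_1}(\xi)|^2+|m_{k_2}(\xi)|^2=\M_{k_1}^2(\xi)+\M_{k_2}^2(\xi)=1$ on all of $\Omega\cup(\Omega+\gamma)$ by assumption. For the second relation, I would fix $\xi\in\Omega$ and expand each cross term. Using that $\M_{k_i}$ is real (so its complex conjugate is itself) and that the two spatial phases combine to $e^{i\langle\xi,\eta_{k_i}\rangle-i\langle\xi+\gamma,\eta_{k_i}\rangle}=e^{-i\langle\gamma,\eta_{k_i}\rangle}$, one gets $m_{k_i}(\xi)\overline{m_{k_i}(\xi+\gamma)}=\M_{k_i}(\xi)\,\M_{k_i}(\xi+\gamma)\,e^{-i\langle\gamma,\eta_{k_i}\rangle}$. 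Now I apply the swap relations $\M_{k_1}(\xi+\gamma)=\M_{k_2}(\xi)$ and $\M_{k_2}(\xi+\gamma)=\M_{k_1}(\xi)$, so that both terms carry the \emph{same} product $\M_{k_1}(\xi)\M_{k_2}(\xi)$:
\[
m_{k_1}(\xi)\overline{m_{k_1}(\xi+\gamma)}+m_{k_2}(\xi)\overline{m_{k_2}(\xi+\gamma)}
=\M_{k_1}(\xi)\M_{k_2}(\xi)\left(e^{-i\langle\gamma,\eta_{k_1}\rangle}+e^{-i\langle\gamma,\eta_{k_2}\rangle}\right).
\]
Factoring $e^{-i\langle\gamma,\eta_{k_2}\rangle}$ out of the bracket leaves $1+e^{-i\langle\gamma,\eta_{k_1}-\eta_{k_2}\rangle}$, and since $e^{-i\langle\gamma,\eta_{k_1}-\eta_{k_2}\rangle}=\overline{e^{i\langle\gamma,\eta_{k_1}-\eta_{k_2}\rangle}}=\overline{-1}=-1$, the bracket vanishes, which is exactly where the hypothesis on $\eta_{k_1},\eta_{k_2}$ enters.

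For the third relation on $\Omega+\gamma$, rather than recomputing I would write $\xi=\zeta+\gamma$ with $\zeta\in\Omega$; then the left-hand side equals $\sum_{i}m_{k_i}(\zeta+\gamma)\overline{m_{k_i}(\zeta)}$, which is literally the complex conjugate of the second relation evaluated at $\zeta\in\Omega$, hence zero as well. The only places requiring care are applying the swap relation in the correct direction so that both $i=1,2$ terms yield the common product $\M_{k_1}(\xi)\M_{k_2}(\xi)$, and tracking the conjugation of the phase so that the bracket becomes $1+(-1)$ and not $1+1$. I would also remark that such $\eta_{k_1},\eta_{k_2}\in\Lambda$ exist because $\gamma\notin\Lambda^*$, so $\langle\gamma,\cdot\rangle$ is not identically in $2\pi\Z$ on $\Lambda$; and that the $\Lambda^*$-periodicity of $\M_{k_i}$ is not needed for these three pointwise identities, since every argument $\xi,\xi\pm\gamma$ stays within the domain $\Omega\cup(\Omega+\gamma)$.
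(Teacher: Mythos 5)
Your proposal is correct and follows essentially the same route as the paper's own proof: a direct expansion of the cross terms, using the swap relations to produce the common factor $\M_{k_1}(\xi)\M_{k_2}(\xi)$ and the hypothesis $e^{i\left<\gamma,\eta_{k_1}-\eta_{k_2}\right>}=-1$ to make the phase bracket vanish. The only difference is that you spell out the first and third identities (the paper verifies only the second "for instance"), which is a harmless elaboration.
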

\begin{proof}
  This can be  verified directly. For instance, given $\xi \in \Omega$,
  \begin{align*}
    & m_{k_1}(\xi)\overline{m_{k_1}(\xi+\gamma)} + m_{k_2}(\xi)\overline{m_{k_2}(\xi+\gamma)} \\
    = &\M_{k_1}(\xi)e^{i\left<\xi,\eta_{k_1}\right>}\M_{k_1}(\xi+\gamma)e^{-i\left<\xi+\gamma,\eta_{k_1}\right>} + \M_{k_2}(\xi)e^{i\left<\xi,\eta_{k_2}\right>}\M_{k_2}(\xi+\gamma)e^{-i\left<\xi+\gamma,\eta_{k_2}\right>}\\
    = &\M_{k_1}(\xi) \M_{k_2}(\xi) e^{-i\left<\gamma,\eta_{k_1}\right>} + \M_{k_2}(\xi) \M_{k_1}(\xi) e^{-i\left<\gamma,\eta_{k_2}\right>} = 0
  \end{align*}
\end{proof}

We thus need to find $\left\{\eta_k\right\}_{0\le k\le 6}$ such that $e^{i\left<\gamma,\eta_{k_1}-\eta_{k_2}\right>} = -1$ for every triple $(k_1,k_2,\gamma)$ corresponding to the regular boundary $E(k_1,k_2,\gamma)\cup E(k_1,k_2,-\gamma)$. In the case of hexagonal wavelets with six directions, such collection of triples is
\begin{align}\nonumber
  T=\left\{ (1,2,\left(\pi/2,0\right)), (2,3,\left(\pi/2,\sqrt{3}\pi/2\right)), (3,4, \left( \pi/4,-\sqrt{3}\pi/4\right)),\right.\\ \label{eq:basis_triples}
  \left.(4,5,\left(\pi,0\right)), (5,6, \left(\pi/4,\sqrt{3}\pi/4 \right)), (6,1,\left( \pi/2,-\sqrt{3}\pi/2\right)) \right\}.
\end{align}
Note that we have omitted the regular boundaries on the boundary of $S$ (see Figure~\ref{fig:bdry_classify}) since extending $m_k$ beyond such boundaries also causes aliasing after $\Lambda^*$ periodic folding. One can easily verify that the following choice of $\left\{\eta_k\right\}_{0\le k\le 6}$ satisfies the condition in Proposition~\ref{prop:omega_eta} for all triples in \eqref{eq:basis_triples}
\begin{align}
  \label{eq:eta_basis}
  \eta_0 = (0,0), \eta_1 = (-1,-\sqrt{3}), \eta_2 = (1,\sqrt{3}), \eta_3 = (2,0), \eta_4 = (-2,0), \eta_5 = (-1,\sqrt{3}), \eta_6 = (1,-\sqrt{3}).
\end{align}

With the specific choice of $\left\{ \eta_k\right\}_{0\le k\le 6}$ in \eqref{eq:eta_basis}, Algorithm~\ref{alg:smooth_basis} summarizes the iterative procedure of continuously extending $(m_k)_{0\le k\le 6}$ across their regular boundaries based on Proposition~\ref{prop:change_m1_m2} and~\ref{prop:omega_eta}. When designing $\left(\M_k\right)_{0\le k\le 6}$, we let the  values $\M_{k_1}(\xi), \M_{k_2}(\xi)$ change continuously on each $F_\epsilon(k_1,k_2,\pm\gamma)$ while obeying \eqref{eq:alg_sum_to_1}. Each $\M_k$ is set to be symmetric with respect to the origin so that $\psi^k$ and $\phi$ are real-valued. Furthermore, we require $\left( \M_k\right)_{0\le k\le 6}$ to be $2\pi/3$-rotation invariant in the sense that both $\M_0$ and the set $\left\{(\M_1,\M_2),(\M_3,\M_4),(\M_5,\M_6) \right\}$ are invariant by a $2\pi/3$ rotation.

\begin{algorithm}
\floatname{algorithm}{Algorithm}
\caption{Iterative filter smoothing across regular boundaries}
\label{alg:smooth_basis}
\begin{algorithmic}[1]
\REQUIRE $\M_k = \chi_{\A_k}, k = 0, \ldots, 6$
\ENSURE  PR filter bank $(m_k)_{0\le k\le 6}$ continuously extended across regular boundaries.
\FOR{$(k_1,k_2,\gamma)\in T$ defined in \eqref{eq:basis_triples}}
\STATE Identify disjoint $\Omega$ and $\Omega+\gamma$ such that $F_\epsilon(k_1,k_2,\pm\gamma)=\Omega \cup (\Omega+\gamma)$.
\STATE Design real-valued functions $(\M_{k_1}, \M_{k_2})$ on $\Omega$ such that
\begin{align}
  \label{eq:alg_sum_to_1}
  \M_{k_1}^2(\xi) + \M_{k_1}^2(\xi) = 1, \quad \forall \xi \in \Omega.
\end{align}
\STATE Define $(\M_{k_1}, \M_{k_2})$ on $\Omega+\gamma$:
\begin{align*}
  \M_{k_1}(\xi) = \M_{k_2}(\xi-\gamma), ~\M_{k_2}(\xi) = \M_{k_1}(\xi-\gamma), \quad \forall \xi \in \Omega+\gamma.
\end{align*}
\ENDFOR
\STATE Let $m_k(\xi) = \M_{k}(\xi)e^{i\left<\xi,\eta_{k}\right>}, k = 0\ldots, 6$, where $\left(\eta_k\right)_{0\le k\le 6}$ are chosen as in \eqref{eq:eta_basis}.
\end{algorithmic}
\end{algorithm}

\begin{figure}
    \centering
    \begin{subfigure}[t]{0.45\textwidth}
        \includegraphics[width=\textwidth]{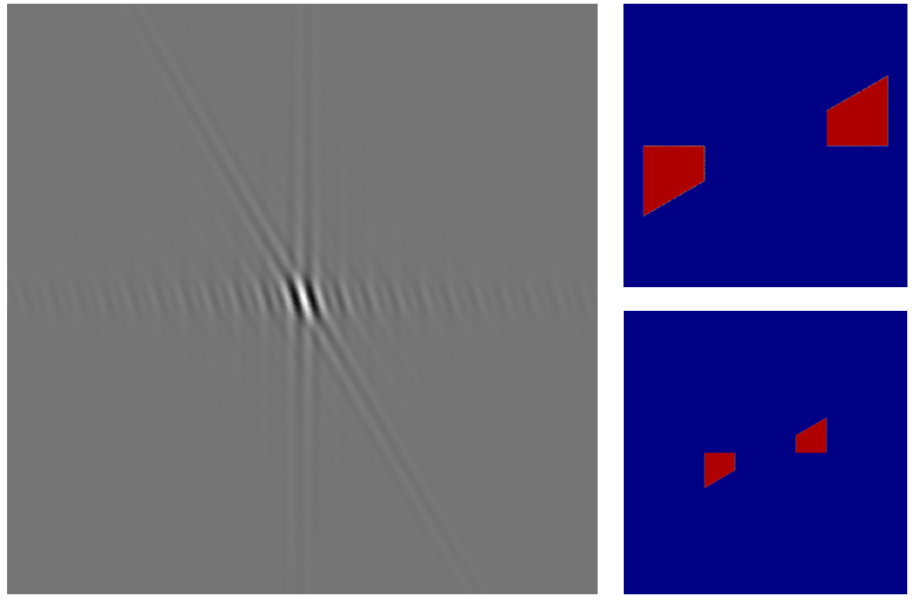}
        \caption{\small Shannon wavelet $\psi^1_{\text{sh}}$.}
        \label{fig:lvl0_shannon_idx_1}
    \end{subfigure}
     ~
    \begin{subfigure}[t]{0.45\textwidth}
        \includegraphics[width=\textwidth]{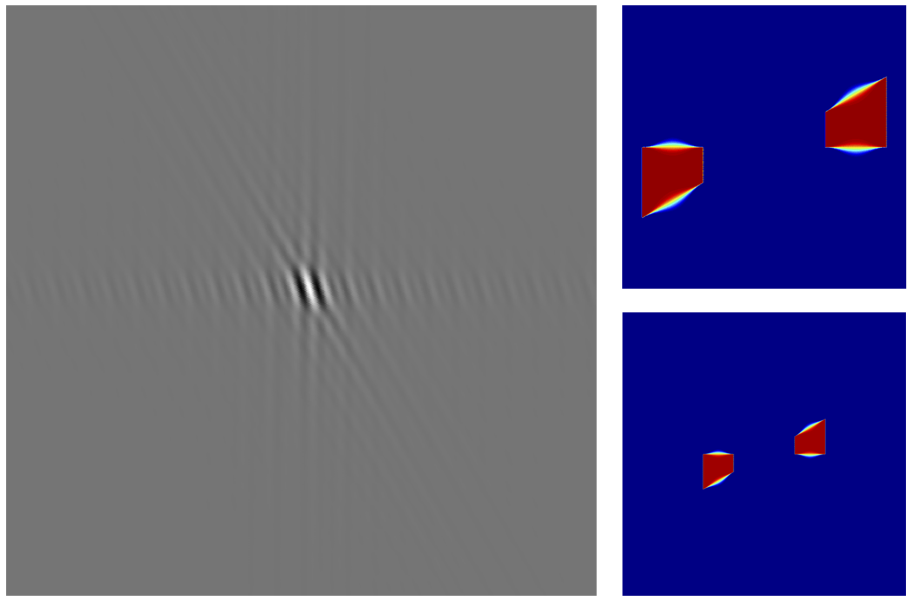}
        \caption{\small $\psi^1_{\text{ob1}}$ after regular boundary smoothing.}
        \label{fig:lvl0_ON1_idx_1}
    \end{subfigure}
     ~
    \begin{subfigure}[t]{0.45\textwidth}
        \includegraphics[width=\textwidth]{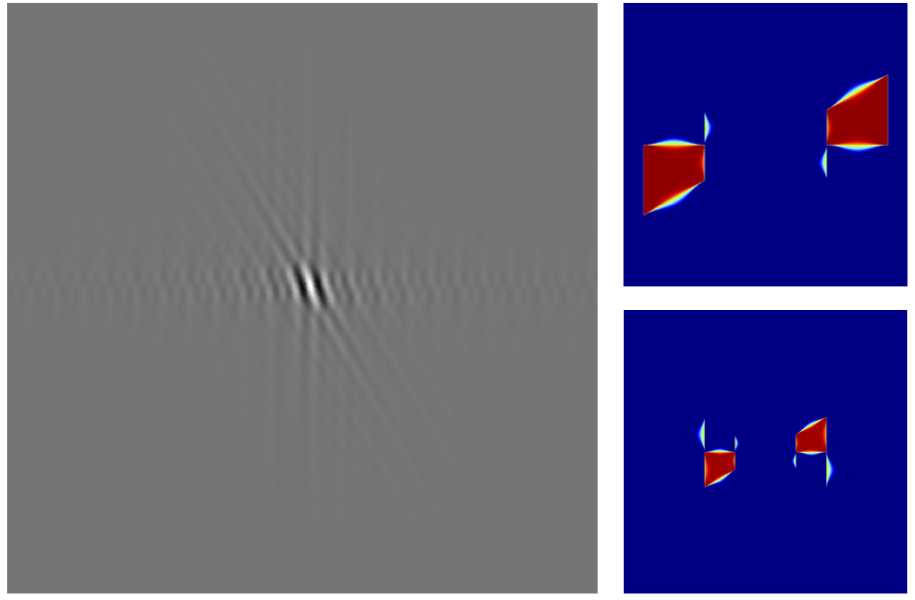}
        \caption{\small $\psi^1_{\text{ob2}}$ after regular boundary and refinement filter smoothing.}
        \label{fig:lvl0_ON2_idx_1}
    \end{subfigure}
     ~
    \begin{subfigure}[t]{0.45\textwidth}
        \includegraphics[width=\textwidth]{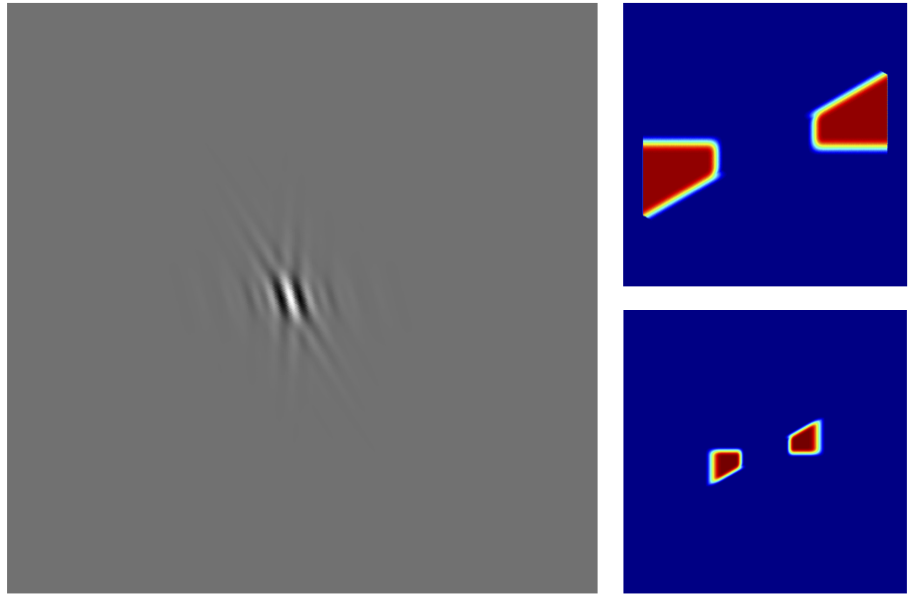}
        \caption{\small Wavelet frame $\psi^1_{\text{fr}}$. }
        \label{fig:lvl0_frame_idx_1}
    \end{subfigure}
    \caption{\small The wavelet function $\psi^1$ after boundary smoothing. \textbf{(a)} Shannon wavelet $\psi^1_{\text{sh}}$ (no smoothing.) \textbf{(b)} $\psi^1_{\text{ob1}}$ obtained from regular boundary smoothing in Section~\ref{sec:bdry_smoothing}. \textbf{(c)} $\psi^1_{\text{ob2}}$ obtained from regular boundary and refinement filter smoothing in Section~\ref{sec:scaling_smoothing}. \textbf{(d)} Wavelet frame $\psi^1_{\text{fr}}$ in Section~\ref{sec:frame}. For every block of three figures, the large figure on the left shows $\psi^1$ in the spatial domain, the upper right figure displays the modulus $\M_1=|m_1|$ of the filter $m_1\in L^2(\R^2/\Lambda^*)$, and the lower right figure shows the modulus $|\widehat{\psi^1}|$ in the frequency domain, which is supported mainly on $A_1/2$.}
\label{fig:lvl0_idx_1}
\end{figure}

Figure~\ref{fig:lvl0_ON1_idx_1} shows in particular the design of $\M_1\in L^2(\R^2/\Lambda^*)$ (the upper right figure) after regular boundary smoothing,  the corresponding wavelet function $\psi^1$ (the large figure on the left) in the spatial domain, and the modulus $|\widehat{\psi^1}|$ of its Fourier transform  (the lower right figure.) Compared to its Shannon counterpart (Figure~\ref{fig:lvl0_shannon_idx_1}), it is clear that $\psi^1$ after boundary smoothing has much faster decay in the directions corresponding to the regular boundaries, while the horizontal oscillation is unavoidable due to the vertical singular boundary.

Putting everthing together, we thus have the following theorem
\begin{thm}
  Let $\left(m_k\right)_{0\le k\le 6}$ be the (normalized) critically sampled PR filter bank continuously extended across the regular boundaries of $(\A_k)_{0\le k\le 6}$ according to Algorithm~\ref{alg:smooth_basis}. The correponding scaling function $\phi$ and  wavelets $\left\{\psi^k\right\}_{k=1}^6$ defined in \eqref{eq:def_phi} and \eqref{eq:def_psi} have optimal continuity in the frequency domain satisfying
  \begin{enumerate}
  \item $\left\{\psi_{j,n}^k \right\}_{1\le k\le 6,j\in\Z,n\in 2^{j-1}\Gamma_k}$ is an orthonormal basis of $L^2(\R^2)$.
  \item The functions $\phi$ and $\psi^k$ are well localized in the frequency domain according to the admissible frequency partition $S = \bigcup_{k=0}^6A_k$:
    \begin{align*}
      \sup_{\xi'\in\supp\left(\widehat{\phi_{1}}\right)}\inf_{\xi\in A_0}\|\xi'-\xi\| \le \epsilon, ~\text{and}~
      \sup_{\xi'\in\supp\left(\widehat{\psi^k_{1}}\right)}\inf_{\xi\in A_k}\|\xi'-\xi\| \le \epsilon, ~1\le k\le 6.
    \end{align*}
  \end{enumerate}
Moreover, the wavelet basis of $L^2(\R^2)$ is $2\pi/3$-rotation invariant in the sense that the scaling function $\phi$ and the set $\left\{\left(\psi^1,\psi^2 \right), \left(\psi^3,\psi^4 \right), \left(\psi^5,\psi^6 \right)  \right\}$ are invariant by a $2\pi/3$ rotation.
\end{thm}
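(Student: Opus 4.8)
The plan is to verify the three claims in sequence, leaning heavily on the machinery already assembled. The filter bank $(m_k)_{0\le k\le 6}$ produced by Algorithm~\ref{alg:smooth_basis} is, by construction, obtained from the Shannon filter bank $(\chi_{\A_k})_{0\le k\le 6}$ by finitely many local modifications, each supported on a single set $F_\epsilon(k_1,k_2,\pm\gamma)$ for a triple in the collection $T$ of \eqref{eq:basis_triples}. The first task is to confirm that $(m_k)_{0\le k\le 6}$ is a critically sampled PR filter bank. I would argue inductively over the triples in $T$: the Shannon filters satisfy \eqref{eq:PR_hex_identity_sum} and \eqref{eq:PR_hex_compact} trivially (they have disjoint supports modulo $\Lambda^*$), and each modification step changes only $m_{k_1},m_{k_2}$ on $F_\epsilon(k_1,k_2,\pm\gamma)=\Omega\cup(\Omega+\gamma)$. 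By Proposition~\ref{prop:change_m1_m2}, after such a step the PR condition holds globally if and only if \eqref{eq:new_PR} holds on $F_\epsilon(k_1,k_2,\pm\gamma)$; and by Proposition~\ref{prop:omega_eta}, the specific form $m_{k_i}=\M_{k_i}e^{i\langle\xi,\eta_{k_i}\rangle}$ with the translations $\eta_k$ of \eqref{eq:eta_basis} satisfying $e^{i\langle\gamma,\eta_{k_1}-\eta_{k_2}\rangle}=-1$ guarantees \eqref{eq:new_PR}. One subtlety to record explicitly is that the disjointness property \eqref{eq:E-epsilon-distinct} ensures the successive modifications do not interfere: each $F_\epsilon$ overlaps only the two filters named in its triple, so earlier-smoothed boundaries are not disturbed, and the support-containment hypothesis \eqref{eq:maximum_supp} of Proposition~\ref{prop:change_m1_m2} is preserved throughout the loop. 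I would verify separately that $e^{i\langle\gamma,\eta_{k_1}-\eta_{k_2}\rangle}=-1$ for each of the six triples in \eqref{eq:basis_triples} using \eqref{eq:eta_basis}; this is the routine arithmetic check that underpins correctness.

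Once $(m_k)$ is confirmed to be a critically sampled PR filter bank with $M_0(0)=|D|^{1/2}$ (here $D=2\,\mathrm{Id}$, so $|D|=4$, and $\M_0(0)=1$ gives $M_0(0)=\sqrt{|\Lambda/\Gamma_0|}=2$), claim~(1) follows by invoking Theorem~\ref{thm:MRA-filter}. That theorem reduces the orthonormal-basis conclusion to two hypotheses: critical sampling (just established) together with \eqref{eq:mirror} for $k=0$ (automatic by Proposition~\ref{prop:crit_PR}), and the uniform lower-bound condition $\inf_{p>0,\xi\in S}|M_0((D^{-T})^p\xi)|>0$ on a compact reciprocal cell $S\ni 0$. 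The main point to check is this spectral lower bound on $M_0$. Because $\M_0$ is a smoothed version of $\chi_{A_0}$ with $A_0=\tfrac12 S$ containing a neighborhood of the origin, and the smoothing only enlarges its support slightly across regular boundaries while keeping $\M_0\equiv 1$ on a neighborhood of $0$, the dilated arguments $(D^{-T})^p\xi=2^{-p}\xi$ contract every $\xi$ in the compact cell $S$ into the region where $\M_0=1$; hence $|M_0((D^{-T})^p\xi)|$ is bounded below by a positive constant. I would state this as the key analytic step, noting that it is exactly the condition ruling out the decay-of-$\widehat\phi$ pathology and guaranteeing that the Parseval frame of Theorem~\ref{thm:MRA-filter} is in fact an orthonormal basis.

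For claim~(2), the frequency localization, I would observe that $\supp(\widehat{\psi^k_1})$ is governed by $\supp(m_k)\subset\overline{\A_k}+\overline{B(0,\epsilon)}$ through the defining relation \eqref{eq:def_psi}, $\widehat{\psi^k}(\xi)=|D|^{-1/2}\widehat\phi(D^{-T}\xi)M_k(D^{-T}\xi)$; thus $\widehat{\psi^k_1}$ is supported where $M_k(D^{-T}\cdot)$ is, i.e.\ within an $\epsilon$-neighborhood of $A_k$ after the dilation, which yields the stated Hausdorff-type bound $\sup_{\xi'}\inf_{\xi\in A_k}\|\xi'-\xi\|\le\epsilon$. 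The same argument applied to $m_0$ gives the bound for $\phi$. This step is essentially bookkeeping on supports and follows directly from the support constraint imposed throughout the construction and from $\epsilon<\sqrt3\pi/12$ keeping the enlarged supports inside $S$ modulo $\Lambda^*$. Finally, the $2\pi/3$-rotation invariance is inherited directly from the design requirement imposed in the paragraph preceding Algorithm~\ref{alg:smooth_basis}: since $\M_0$ and the set $\{(\M_1,\M_2),(\M_3,\M_4),(\M_5,\M_6)\}$ are chosen $2\pi/3$-invariant and the lattices $\Gamma_{1,2},\Gamma_{3,4},\Gamma_{5,6}$ are rotations of one another by $\pm\pi/3$, the corresponding $\phi$ and the three wavelet pairs transform into one another under the rotation; I would make this precise by checking that a $2\pi/3$ rotation permutes the defining data in \eqref{eq:def_phi}–\eqref{eq:def_psi} consistently.

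The hard part will be the spectral lower bound on $M_0$ in claim~(1): everything else is either a direct application of the cited propositions/theorems or a support computation, but establishing $\inf_{p>0,\xi\in S}|M_0((D^{-T})^p\xi)|>0$ requires arguing carefully that the smoothing of $\M_0$ never destroys the value $\M_0\equiv1$ near the origin and that the iterated contraction $2^{-p}\xi$ lands in that region uniformly over the compact cell $S$.
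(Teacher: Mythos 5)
Your proposal is correct and follows essentially the same route as the paper: the paper's own proof only writes out the $2\pi/3$-rotation-invariance argument and treats claims (1) and (2) as already established by Propositions~\ref{prop:change_m1_m2}, \ref{prop:omega_eta} and Theorem~\ref{thm:MRA-filter}, which is exactly the chain of reasoning you spell out in detail. One small inaccuracy worth noting: for this construction (Algorithm~\ref{alg:smooth_basis} only), $\A_0$ has no regular boundaries, so $\M_0=\chi_{\A_0}$ is \emph{not} smoothed at all, which makes your ``hard part'' --- the lower bound $\inf_{p>0,\,\xi\in S}\bigl|M_0\bigl((D^{-T})^p\xi\bigr)\bigr|>0$ --- trivial, since $2^{-p}S\subset\tfrac12 S=A_0$ for $p\ge 1$ gives $|M_0(2^{-p}\xi)|=2$ there.
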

\begin{proof}
  The only thing left to show is the rotation invariance of the wavelet system, which, by the definition of $\phi$ and $\left\{\psi^k\right\}_{k=1}^6$, is equivalent to the rotation invariance of $m_0$ and $\left\{(m_1,m_2), (m_3,m_4), (m_5,m_6) \right\}$, where $m_k(\xi) = \M_k(\xi)e^{i\left<\eta_k,\xi\right>}$. This is true because both the moduli $\M_0$, $\left\{(\M_1,\M_2), (\M_3,\M_4), (\M_5,\M_6) \right\}$ and the phases  $e^{\left<\eta_0,\cdot\right>}$, $\{(e^{\left<\eta_1,\cdot\right>},e^{\left<\eta_2,\cdot\right>}), (e^{\left<\eta_3,\cdot\right>},e^{\left<\eta_4,\cdot\right>}), (e^{\left<\eta_5,\cdot\right>},e^{\left<\eta_6,\cdot\right>})\}$ are constructed to be invariant by a $2\pi/3$ rotation. 
\end{proof}

\subsubsection{Smoothing of the refinement filter}
\label{sec:scaling_smoothing}

\begin{figure}
    \centering
    \begin{subfigure}[t]{0.45\textwidth}
        \includegraphics[width=\textwidth]{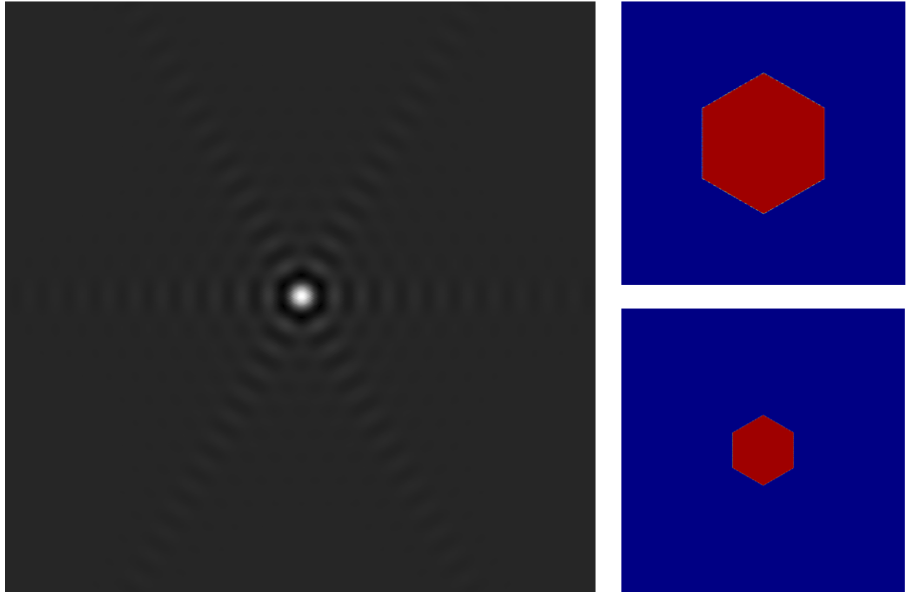}
        \caption{\small Shannon scaling function $\phi_{\text{sh}}$.}
        \label{fig:lvl0_shannon_idx_0}
    \end{subfigure}
     ~
    \begin{subfigure}[t]{0.45\textwidth}
        \includegraphics[width=\textwidth]{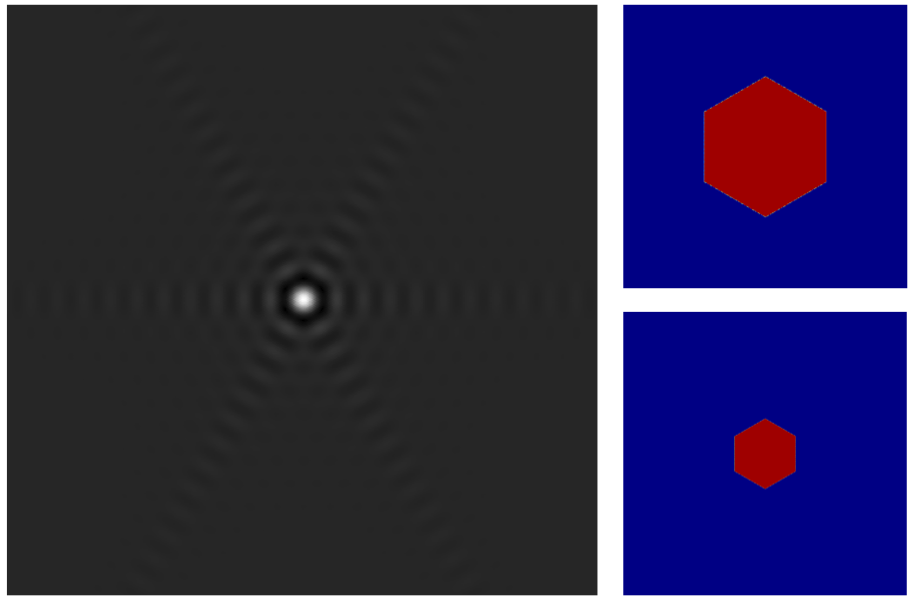}
        \caption{\small $\phi_{\text{ob1}}$ after regular boundary smoothing (unchanged because $\A_0$ has no regular boundary.)}
        \label{fig:lvl0_ON1_idx_0}
    \end{subfigure}
     ~
    \begin{subfigure}[t]{0.45\textwidth}
        \includegraphics[width=\textwidth]{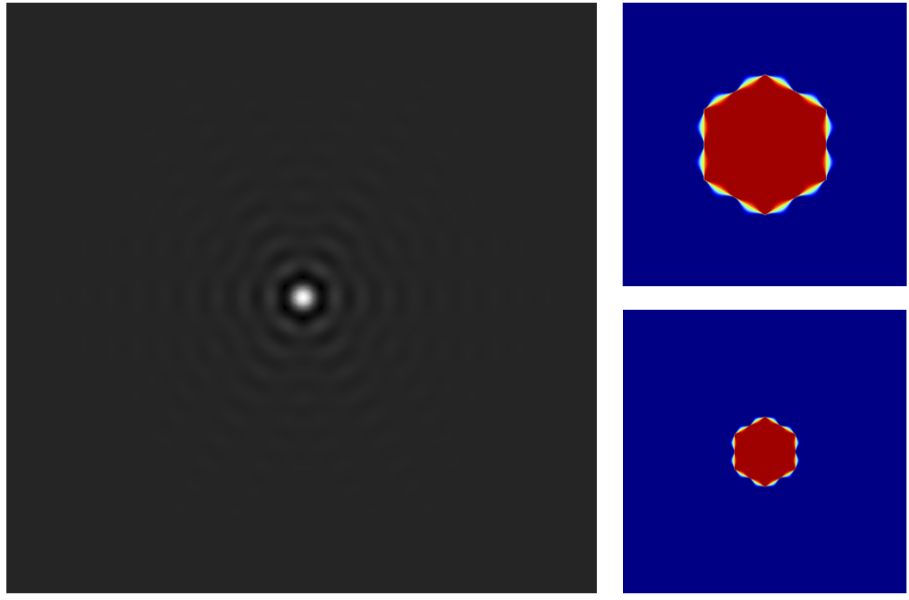}
        \caption{\small $\phi_{\text{ob2}}$ after regular boundary and refinement filter smoothing.}
        \label{fig:lvl0_ON2_idx_0}
    \end{subfigure}
     ~
    \begin{subfigure}[t]{0.45\textwidth}
        \includegraphics[width=\textwidth]{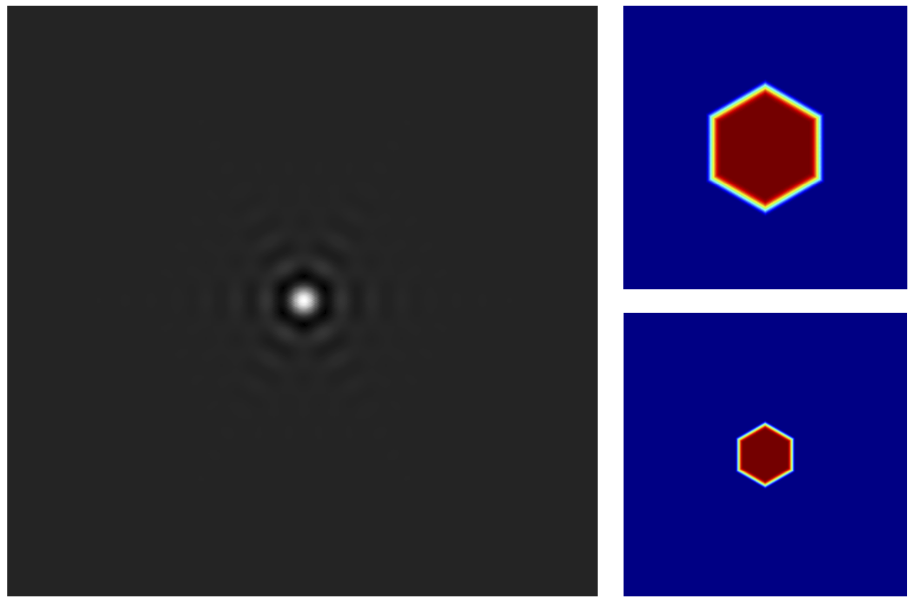}
        \caption{\small Scaling function $\phi_{\text{fr}}$ of the wavelet frame. }
        \label{fig:lvl0_frame_idx_0}
    \end{subfigure}
    \caption{\small The scaling function $\phi$  after boundary smoothing. \textbf{(a)} Shannon scaling function $\phi_{\text{sh}}$ (no smoothing.) \textbf{(b)} $\phi_{\text{ob1}}$ obtained from regular boundary smoothing in Section~\ref{sec:bdry_smoothing} (this is the same as $\phi_{\text{sh}}$ because $\A_0$ has no regular boundary.) \textbf{(c)} $\phi_{\text{ob2}}$ obtained from regular boundary and refinement filter smoothing in Section~\ref{sec:scaling_smoothing}. \textbf{(d)} Scaling function $\phi_{\text{fr}}$ of the wavelet frame in Section~\ref{sec:frame}. For every block of three figures, the large figure on the left shows $\phi$ in the spatial domain, the upper right figure displays the modulus $\M_0=|m_0|$ of the filter $m_0\in L^2(\R^2/\Lambda^*)$, and the lower right figure shows the modulus $|\widehat{\phi}|$ in the frequency domain, which is supported mainly on $A_0/2$.}
\label{fig:lvl0_idx_0}
\end{figure}

Since the boundary of $\A_0$ consists only of singular boundaries (see Figure~\ref{fig:bdry_classify}), the refinement filter $m_0$ (as well as the adjacent $(m_k)_{1\le k\le 6}$) cannot be continuously extended beyond $\partial\A_0$. Therefore $m_0(\xi) = \chi_{\A_0}(\xi)$, and the scaling function $\phi$ has slow spatial decay (see Figure~\ref{fig:lvl0_ON1_idx_0}.) One can partially remedy this by introducing some aliasing in $(m_k)_{1\le k\le 6}$. 

\begin{figure}
    \centering
    \begin{subfigure}[t]{0.48\textwidth}
        \includegraphics[height = 4cm]{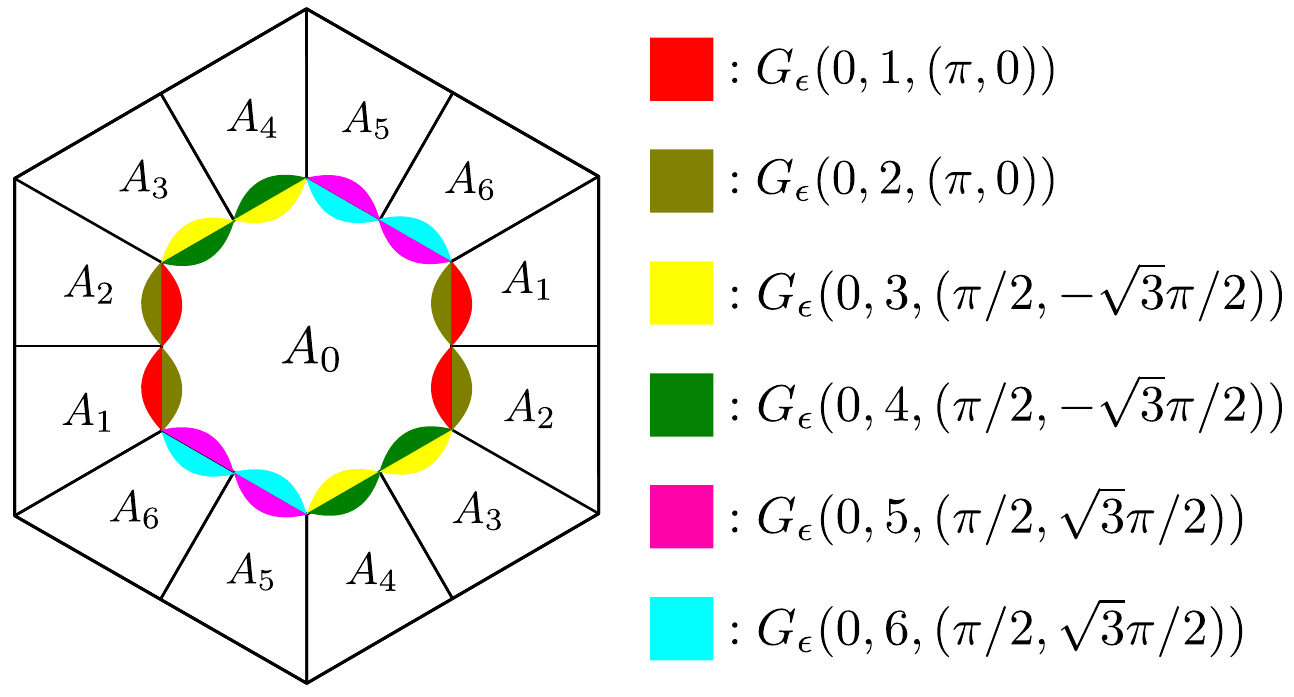}
        \caption{}
        \label{fig:G_epsilon}
    \end{subfigure}
     ~
    \begin{subfigure}[t]{0.24\textwidth}
        \includegraphics[height = 4cm]{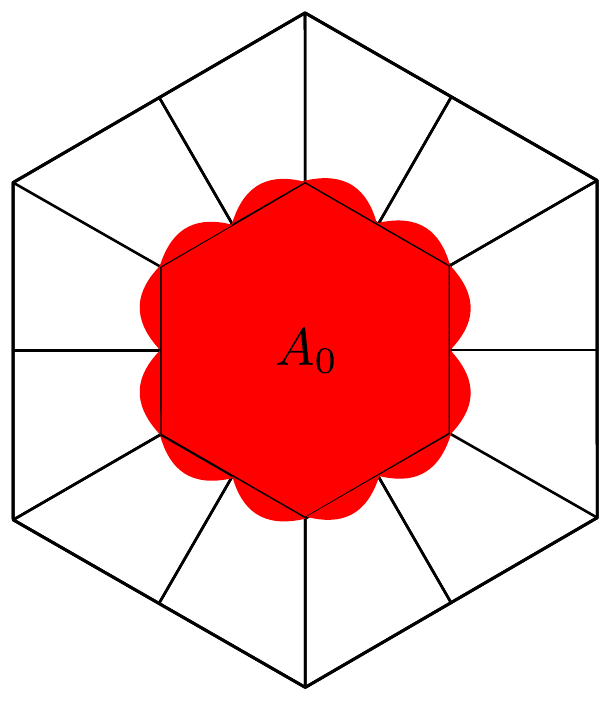}
        \caption{}
        \label{fig:spt_m0_alias}
    \end{subfigure}
     ~
    \begin{subfigure}[t]{0.24\textwidth}
        \includegraphics[height = 4cm]{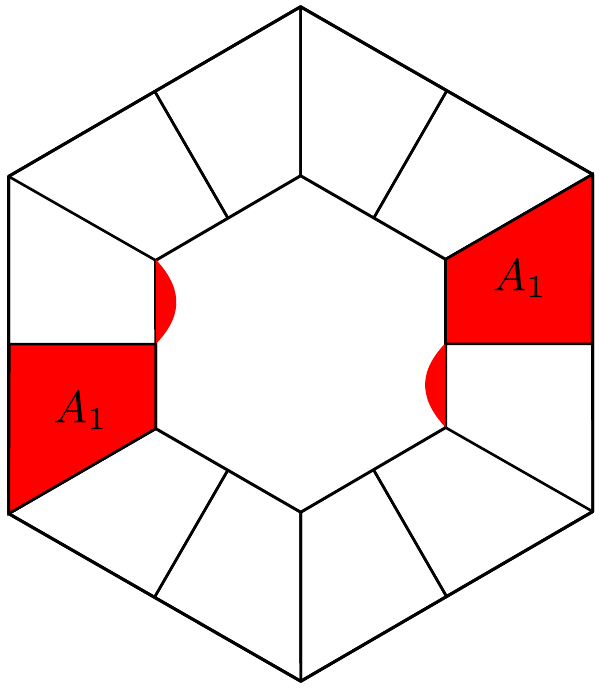}
        \caption{}
        \label{fig:spt_m1_alias}
    \end{subfigure}
    \caption{\small \textbf{(a)} Definition of $G_\epsilon(0,k,\gamma_k), 1\le k\le 6.$ \textbf{(b)} Support of $m_0$ after continuous extension into $\bigcup_{k=1}^6G_\epsilon(0,k,\gamma_k)$. \textbf{(c)} Support of $m_1$ after the refinement filter smoothing.}
\end{figure}

Define the sets $G_\epsilon(0,k,\gamma_k)$, $k = 1, \ldots, 6$, as shown in Figure~\ref{fig:G_epsilon}, where $\gamma_1= \gamma_2 = (\pi,0)$, $\gamma_3 = \gamma_4 = (\pi/2,-\sqrt{3}\pi/2)$, $\gamma_5 = \gamma_6 = (\pi/2,\sqrt{3}\pi/2)$, and each $G_\epsilon(0,k,\gamma_k)=\Omega_k\cup (\Omega_k+\gamma_k)$ is the disjoint union of $\Omega_k$ and $\Omega_k+\gamma_k$. We seek to continuously extend $m_0$ into the region $\bigcup_{k=1}^6G_\epsilon(0,k,\gamma_k)$ (see Figure~\ref{fig:spt_m0_alias}), which inevitably causes $m_k, 1\le k\le 6$, to be supported at least on $\A_k\cup G_\epsilon(0,k,\gamma_k)$ (see Figure~\ref{fig:spt_m1_alias}) because of the common singular boundary between $\A_0$ and $\A_k$. The following proposition explains how to achieve this.

\begin{prop}
  \label{prop:smooth_scaling_fct}
  Suppose $\left(m_k(\xi) = \M_k(\xi)e^{i\left<\xi,\eta_k\right>}\right)_{0\le k\le 6}$ satisfies the PR condition \eqref{eq:PR_hex_identity_sum}\eqref{eq:PR_hex_compact}, and
  \begin{align*}
    \left\{
    \begin{aligned}
      &\supp(m_k) \subset \overline{\A_k}\cup \left(\bigcup_{k' \text{s.t.}\atop R(k)\cap R(k')\neq \emptyset}\bigcup_{\gamma \in \left(\Gamma_k^* \cap \Gamma_{k'}^*\right)\setminus \Lambda^*}E_\epsilon(k',k,\gamma)  \right)\cup G_\epsilon(0,k,\gamma_k),\\
      &\supp(m_0) \subset \overline{\A_0}\cup \bigcup_{k=1}^6G_\epsilon(0,k,\gamma_k),\\
      & \left|G_\epsilon(0,k,\gamma_k)\cap E_\epsilon(k_1,k_2,\gamma)\right| = 0, ~\forall k_1\neq k_2, \gamma\in \left(\Gamma_{k_1}^*\cap \Gamma_{k_2}^*\right)\setminus\Lambda^*, 1\le k\le 6.
    \end{aligned}\right.
  \end{align*}
Suppose we change only the values of $\M_0(\xi)$ and $\M_k(\xi)$ on $G_\epsilon(0,k,\gamma_k) = \Omega_k \cup (\Omega_k+\gamma_k)$ such that
\begin{align*}
  \left\{
  \begin{aligned}
    &\M_{0}(\xi), \M_{k}(\xi)\ge 0,~~ \M_{0}^2(\xi) + \M_{k}^2(\xi) = 1,\\
    & \M_{0}(\xi+\gamma) = \M_{k}(\xi), ~~ \M_{k}(\xi+\gamma) = \M_{0}(\xi),
  \end{aligned}\right. \quad \forall \xi\in \Omega_k,
\end{align*}
and $\eta_0,\eta_k\in\Lambda^*$ satisfies
\begin{align}
  \label{eq:eta_smooth_scaling}
  e^{i\left<\gamma_k,\eta_{0}-\eta_{k}\right>} = -1,
\end{align}
then the new filter bank $\left(m_k(\xi) = \M_k(\xi)e^{i\left<\xi,\eta_k\right>}\right)_{0\le k\le 6}$ still satisfies the PR condition.
\end{prop}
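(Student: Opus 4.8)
The plan is to follow the two-step strategy already used for the regular-boundary smoothing in Propositions~\ref{prop:change_m1_m2} and~\ref{prop:omega_eta}, adapting it to the single pair $(m_0,m_k)$ and the shift $\gamma_k$. Since the new filter bank differs from the old one only in the values of $m_0$ and $m_k$ on $G_\epsilon(0,k,\gamma_k)=\Omega_k\cup(\Omega_k+\gamma_k)$, I would split the verification of the PR conditions \eqref{eq:PR_hex_identity_sum} and \eqref{eq:PR_hex_compact} according to whether $\xi\notin G_\epsilon(0,k,\gamma_k)$ or $\xi\in G_\epsilon(0,k,\gamma_k)$, and then, since the statement concerns one $k$ at a time, apply the result iteratively over $k=1,\dots,6$.

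For $\xi\notin G_\epsilon(0,k,\gamma_k)$ the identity-summation condition is automatic because every modulus is unchanged there. For the shift-cancellation condition I would argue exactly as in the first half of the proof of Proposition~\ref{prop:change_m1_m2}: if some cross term $\overline{m_j(\xi+\nu)}m_j(\xi)$ newly failed to cancel, then necessarily $\xi+\nu\in G_\epsilon(0,k,\gamma_k)$ with $j\in\{0,k\}$ and $\nu\in\Gamma_j^*\setminus\Lambda^*$; projecting $\xi$ onto the appropriate boundary and invoking the admissibility relation \eqref{eq:a_k_shift_admissible} would force $\nu=\mp\gamma_k$ and hence $\xi\in G_\epsilon(0,k,\gamma_k)$, a contradiction. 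The disjointness hypotheses in the statement, namely that $G_\epsilon(0,k,\gamma_k)$ meets the regular-boundary neighborhoods $E_\epsilon(k_1,k_2,\gamma)$ only in a null set, are precisely what guarantee that on $G_\epsilon(0,k,\gamma_k)$ the only filters taking nonzero values are $m_0$ and $m_k$, so the identity-summation condition there reduces to $\M_0^2(\xi)+\M_k^2(\xi)=1$, which holds by hypothesis.

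It remains to check the shift-cancellation condition on $G_\epsilon(0,k,\gamma_k)$ itself, and this is where the phase relation \eqref{eq:eta_smooth_scaling} enters. Writing $m_j(\xi)=\M_j(\xi)e^{i\langle\xi,\eta_j\rangle}$, for $\xi\in\Omega_k$ I would compute $\overline{m_0(\xi+\gamma_k)}m_0(\xi)+\overline{m_k(\xi+\gamma_k)}m_k(\xi)$ directly; the swap relations $\M_0(\xi+\gamma_k)=\M_k(\xi)$ and $\M_k(\xi+\gamma_k)=\M_0(\xi)$ factor out a common $\M_0(\xi)\M_k(\xi)$, and the two surviving phases combine into $e^{-i\langle\gamma_k,\eta_k\rangle}\big(e^{-i\langle\gamma_k,\eta_0-\eta_k\rangle}+1\big)=0$ by \eqref{eq:eta_smooth_scaling}, exactly as in Proposition~\ref{prop:omega_eta}. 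The case $\xi\in\Omega_k+\gamma_k$ with shift $-\gamma_k$ is symmetric. One should also record that $\gamma_k\in\Gamma_0^*\cap\Gamma_k^*$, so that both terms genuinely appear in the compact sum \eqref{eq:PR_hex_compact} over $I_{\gamma_k}$; this is a geometric fact about the specific lattices and the chosen shifts $\gamma_k$.

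The main obstacle I anticipate is not the algebra of the cancellation, which is routine once \eqref{eq:eta_smooth_scaling} is imposed, but the bookkeeping in the first step: because $G_\epsilon(0,k,\gamma_k)$ sits astride a \emph{singular} boundary of $\A_0$, I must make sure that extending $m_0$ there does not create any \emph{uncancellable} cross term $\overline{m_0(\xi+\nu)}m_0(\xi)$ for a shift $\nu\neq\pm\gamma_k$ --- the very obstruction that made $\partial\A_0$ singular in the first place. The point to verify carefully is that, for $\epsilon$ small enough, the enlarged support $\overline{\A_0}\cup\bigcup_{k}G_\epsilon(0,k,\gamma_k)$ overlaps its own $\Gamma_0^*$-translates only through the shifts $\pm\gamma_k$, with each such overlap matched by the corresponding $m_k$-term, and that the six regions $G_\epsilon(0,k,\gamma_k)$ are themselves pairwise disjoint up to null sets, so that the single-pair analysis may be applied iteratively without interference.
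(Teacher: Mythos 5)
Your proposal is correct and matches the paper's approach exactly: the paper itself disposes of Proposition~\ref{prop:smooth_scaling_fct} with the single remark that it ``can be proved similarly as Proposition~\ref{prop:change_m1_m2} and~\ref{prop:omega_eta},'' which is precisely the two-step argument (localization of the perturbation off $G_\epsilon(0,k,\gamma_k)$ via the support and disjointness hypotheses, then the explicit phase cancellation using \eqref{eq:eta_smooth_scaling}) that you carry out. The geometric caveat you flag at the end --- that the enlarged support of $m_0$ meets its $\Gamma_0^*$-translates only through the shifts $\pm\gamma_k$ --- is indeed the point the paper leaves implicit in the support hypotheses, and your treatment of it is consistent with the intended argument.
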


Proposition~\ref{prop:smooth_scaling_fct} can be proved similarly as Proposition~\ref{prop:change_m1_m2} and~\ref{prop:omega_eta}. One can easily verify that the choice of $\left\{\eta_k\right\}_{0\le k\le 6}$ in \eqref{eq:eta_basis} satisfies the extra condition \eqref{eq:eta_smooth_scaling} for all $1\le k\le 6$. When iteratively extending $\M_0$ and $\M_k$ into $G_\epsilon(0,k,\gamma_k)$, we change the values $\M_0(\xi)$ and $\M_k(\xi)$ continuously from $1$ to $1/\sqrt{2}$ (or from $1/\sqrt{2}$ to $0$) while satisfying $\M_0(\xi)^2+\M_k(\xi)^2=1$. The resulting filters are still symmetric with respect to the origin and invariant by a $2\pi/3$ rotation. As can be seen in Figure~\ref{fig:lvl0_ON2_idx_0}, the scaling function $\phi$ after refinement filter smoothing has  better spatial localization as compared to its Shannon counterpart (Figure~\ref{fig:lvl0_shannon_idx_0} and~\ref{fig:lvl0_ON1_idx_0}.) The consequence of such smoothing is the unavoidable aliasing in the wavelet function $\psi^k$ (see Figure~\ref{fig:lvl0_ON2_idx_1}.)

\subsection{Alias-free low-redundancy wavelet frame}
\label{sec:frame}

As we have shown in Section~\ref{sec:basis}, the existence of singular boundaries in the six-direction admissible partition (Figure~\ref{fig:bdry_classify}) restricts us from obtaining alias-free hexagonal wavelet bases with continuous transfer functions. Such singular boundaries are caused by certain $\gamma\in \Gamma_k^*\setminus \Gamma_0^*$, $k=1, \ldots, 6$. For instance, the common singular boundary of $\A_0$ and $\A_1$ is caused by $\gamma = (\pi/2, 0)\in\Gamma_1^*\setminus \Gamma_0^*$. This motivates us to downsample the signal filtered by $m_k$, $k=1\ldots, 6$, on a denser lattice $\Gamma_k^{\text{fr}}\supset \Gamma_k$, thus having a coarser dual lattice $\Gamma_k^{\text{fr}*}\subset \Gamma_k^*$ not containing such $\gamma$. In particular, we set
\begin{align}
  \label{eq:gamma_frame}
  \Gamma_k^{\text{fr}} = 2\Lambda =
  \begin{bmatrix}
    2 & 0\\
    \frac{2}{\sqrt{3}} & \frac{4}{\sqrt{3}}
  \end{bmatrix}\Z^2 \eqqcolon \Gamma^{\text{fr}}.
\end{align}
If $\left( M_k, M_k, \Lambda\to\Gamma_k^{\text{fr}} \right)_{k=0}^6$ is a PR filter bank, then $\left\{\psi_{j,n}^k\right\}_{k\in [6],j\in\Z, n\in 2^{j-1}\Gamma_k^{\text{fr}}}$ defined by \eqref{eq:def_psi} will be a Parseval frame of $L^2(\R^2)$ by Theorem~\ref{thm:MRA-filter}. The frame redundancy at each level of decomposition is $\sum_{k=0}^61/|\Lambda/\Gamma_k^{\text{fr}}| = 7/4$, and the redundancy of the entire decomposition is
\begin{align*}
  \left(\sum_{k=1}^6\frac{1}{|\Lambda/\Gamma_k^{\text{fr}}|}\right)\cdot \frac{1}{1-1/|\Lambda/\Gamma_0^{\text{fr}}|} = 2.
\end{align*}
Similar to Proposition~\ref{prop:PR_hex}, we can simplify the PR condition of the filter bank $\left( M_k, M_k, \Lambda\to\Gamma_k^{\text{fr}} \right)_{k=0}^6$ as follows
\begin{prop}
  \label{prop:PR_hex_frame}
  The filter bank $(M_k, M_k, \Lambda \to \Gamma_k^{\text{\normalfont fr}} = \Gamma^{\text{\normalfont fr}})_{k=0}^6$ is PR if and only if the following two conditions hold  for $a.e.~\xi\in\R^2$
  \begin{align}
    \label{eq:PR_hex_identity_sum_frame}
    & \sum_{k=0}^6\left|m_k(\xi)\right|^2=1,\\
    \label{eq:PR_hex_shift_cancel_frame}
    &\sum_{k=0}^6\overline{m_k(\xi+\gamma)}m_k(\xi) = 0, \quad  \forall \gamma \in \Lambda^*\setminus \Gamma^{\text{\normalfont fr}*}
  \end{align}
where $m_k(\xi) = M_k(\xi)/\sqrt{|\Lambda/\Gamma_k^{\text{\normalfont fr}}|}$.
\end{prop}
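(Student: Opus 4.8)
The plan is to specialize the general perfect-reconstruction characterization of Theorem \ref{thm:PR} to the frame setting, where all six wavelet sublattices coincide: $\Gamma_k^{\text{fr}} = \Gamma^{\text{fr}} = 2\Lambda$ for $1 \le k \le 6$. The key structural simplification is that there is now a \emph{single} common sublattice, namely $\Gamma = \Gamma^{\text{fr}}$, shared by the refinement filter band and all directional bands, so the matrices $M(\xi), \widetilde{M}(\xi)$ from \eqref{eq:m}--\eqref{eq:m-tilde} are built from cosets indexed by $\Gamma^{\text{fr}}/\Gamma = \{0\}$ for each band and by $\gamma \in \Gamma^{\text{fr}*}/\Lambda^*$. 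First I would write out $M(\xi)^*\widetilde{M}(\xi) = |\Lambda/\Gamma^{\text{fr}}|\,\mathrm{Id}$ explicitly, using $\widetilde{M}_k = M_k$ (since the filter bank is of the form $(M_k, M_k, \cdot)$), and normalizing via $m_k(\xi) = M_k(\xi)/\sqrt{|\Lambda/\Gamma^{\text{fr}}_k|}$.

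The next step is to interpret the matrix entries of $M(\xi)^*M(\xi)$. Because every band uses the same $\Gamma^{\text{fr}}$, the index set for rows and columns is governed entirely by $\Gamma^{\text{fr}*}/\Lambda^*$. The diagonal entries of the PR equation collapse to the \emph{identity summation} condition $\sum_{k=0}^6 |m_k(\xi)|^2 = 1$, which is exactly \eqref{eq:PR_hex_identity_sum_frame}. The off-diagonal entries, indexed by a pair of distinct cosets differing by some $\gamma \in \Lambda^* \setminus \Gamma^{\text{fr}*}$, yield the \emph{shift cancellation} condition $\sum_{k=0}^6 \overline{m_k(\xi+\gamma)} m_k(\xi) = 0$, which is \eqref{eq:PR_hex_shift_cancel_frame}. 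I would carefully verify that the phase factors $e^{i\langle \eta_k, \xi+\gamma\rangle}$ appearing in \eqref{eq:m} are trivial here, since $\Gamma^{\text{fr}}_k/\Gamma^{\text{fr}} = \{0\}$ forces each $\eta_k = 0$ in the coset representation, so no extra phases survive and the entries reduce cleanly to sums of $\overline{m_k(\xi+\gamma)}m_k(\xi)$.

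The main point requiring care, and the place I expect the real work to lie, is identifying the correct shift set $\Lambda^* \setminus \Gamma^{\text{fr}*}$ for the off-diagonal conditions and confirming that no redundant or spurious equations arise. Unlike Proposition \ref{prop:PR_hex}, where the bands had \emph{distinct} sublattices $\Gamma_k$ and the shift cancellation split into band-specific equations over $\Gamma_k^* \setminus \Gamma_0^*$, here the common sublattice means all shifts $\gamma \in \Gamma^{\text{fr}*}/\Lambda^*$ index the same matrix structure, and the cancellation conditions couple all seven filters simultaneously for every $\gamma \in \Lambda^* \setminus \Gamma^{\text{fr}*}$. I would check that the relevant cosets are exactly those $\gamma$ lying in $\Lambda^*$ but not in $\Gamma^{\text{fr}*}$ (the shifts that distinguish the finer aliasing structure of $\Lambda$ from the coarser one of $\Gamma^{\text{fr}}$), and that summing over all $k$ including $k=0$ is forced by the single shared sublattice. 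This is the essential difference from the basis case and the only substantive step; the remainder is a direct transcription of Theorem \ref{thm:PR} into this symmetric setting.
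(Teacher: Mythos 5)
Your approach is correct and is essentially the paper's own: the paper justifies this proposition only by analogy with Proposition~\ref{prop:PR_hex} (whose proof in Appendix~\ref{apdx:prop_PR_hex} reduces Theorem~\ref{thm:PR} to the sums $S_{k,\gamma}$ over $\gamma\in\Gamma^*/\Lambda^*$), and in the frame setting $\Gamma_k^{\text{fr}}/\Gamma^{\text{fr}}=\{0\}$ kills all phase factors exactly as you observe, leaving the diagonal entries as the identity-summation condition and the off-diagonal entries as the shift-cancellation condition. One caution: the shift set should be $\Gamma^{\text{fr}*}\setminus\Lambda^*$ (the nonzero cosets of $\Gamma^{\text{fr}*}/\Lambda^*$), not $\Lambda^*\setminus\Gamma^{\text{fr}*}$, which is empty because $\Gamma^{\text{fr}}\subset\Lambda$ forces $\Lambda^*\subset\Gamma^{\text{fr}*}$ --- you have inherited a typo from the statement of the proposition, and your description of the relevant $\gamma$ as ``lying in $\Lambda^*$ but not in $\Gamma^{\text{fr}*}$'' should be read with the two reciprocal lattices swapped.
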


Using the definition of singular/regular boundaries in Definition~\ref{def:singular_regular}, one can easily verify that all boundaries of $\A_0$ are singular boundaries, while $\A_k$, $k = 1,\ldots, 6$, has only regular boundaries. This implies that we may be able to extend $m_k$, $k = 1,\ldots, 6$, into the interior of $\A_0$, while $m_0$ has to be supported on $\A_0$. Since the critical-sampling condition, and in particular \eqref{eq:a_k_shift_admissible}, fails to hold for such choice of $\Gamma_k^{\text{fr}}$, we take a route different from the pairwise filter smoothing in Section~\ref{sec:bdry_smoothing} to construct wavelet functions whose Fourier transforms are supported on an $\epsilon$-neighborhood of $A_k$.

Given $\epsilon\in (0,\pi/(4+\sqrt{3}))$, define $\M_0\in L^2(\R^2/\Lambda^*)$ on the reciprocal cell $S$ to be a continuous function  satisfying
\begin{align}
  \label{eq:m0_frame_def}
  \left\{
  \begin{aligned}
    &0\le \M_0(\xi) \le 1, \quad \quad &&\forall \xi \in S\\
    &\M_0(\xi)= 0, \quad \quad && \forall \xi \in S\setminus A_0\\
    &\M_0(\xi) = 1, \quad \quad&&\forall \xi \in S_{\epsilon}\coloneqq \frac{\pi-2\epsilon}{2\pi}S\subset A_0
  \end{aligned}\right.
\end{align}
We further require  $\M_0$ to be invariant by a $\pi/3$-rotation (see Figure~\ref{fig:S_epsilon} for the definition of $S_\epsilon$ and $\M_0$ on the reciprocal cell $S$.)

\begin{figure}
    \centering
    \begin{subfigure}[t]{0.25\textwidth}
        \includegraphics[width = \textwidth]{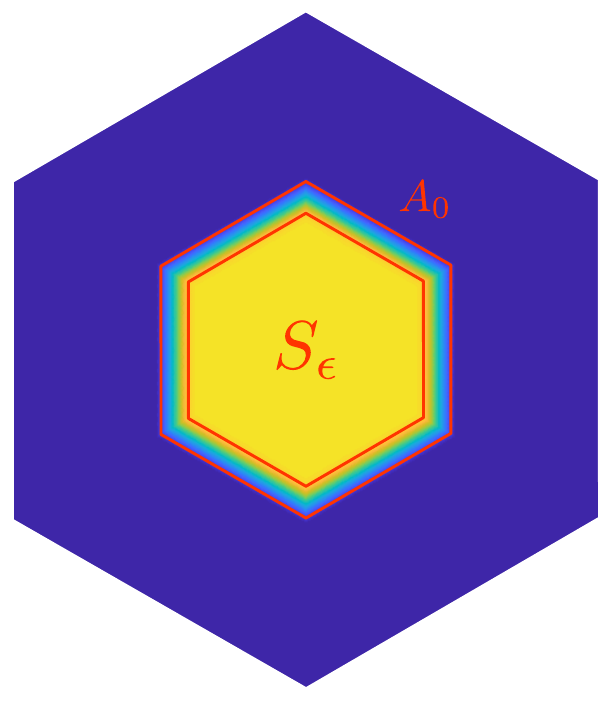}
        \caption{}
        \label{fig:S_epsilon}
    \end{subfigure}
     ~
    \begin{subfigure}[t]{0.275\textwidth}
        \includegraphics[width = \textwidth]{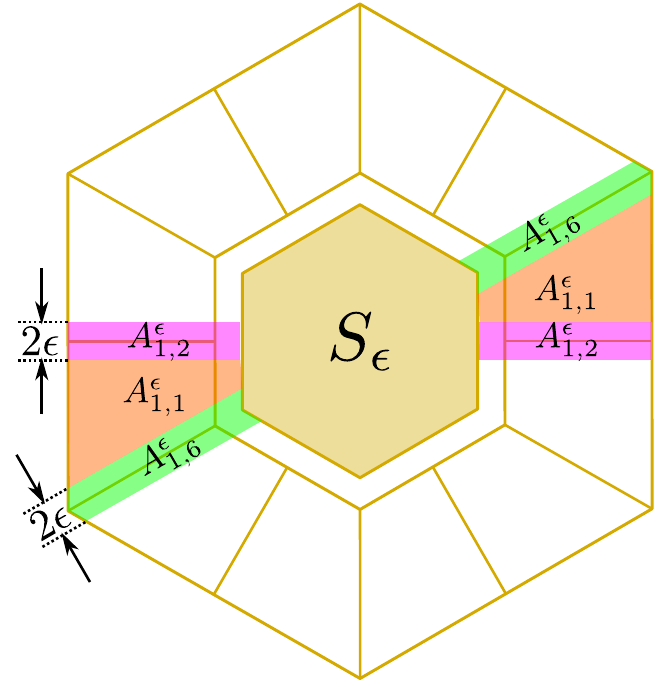}
        \caption{}
        \label{fig:A_1_epsilon}
    \end{subfigure}
    ~~~
    \begin{subfigure}[t]{0.25\textwidth}
        \includegraphics[width = \textwidth]{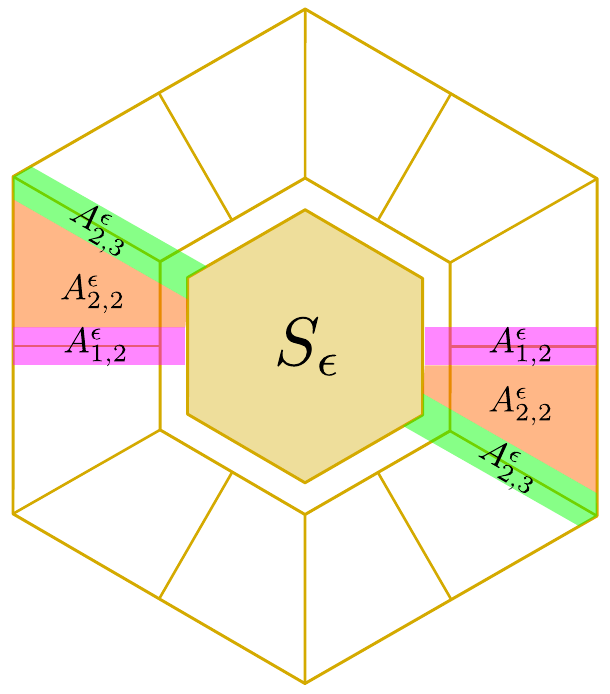}
        \caption{}
        \label{fig:A_2_epsilon}
    \end{subfigure}
    \caption{\small \textbf{(a)} $\M_0\in L^2(\R^2/\Lambda^*)$ is a continuous function supported on $A_0$ in the reciprocal cell $S$, and $\M_0(\xi) = 1, \forall \xi \in S_\epsilon \coloneqq \frac{\pi-2\epsilon}{2\pi}S\subset A_0$. \textbf{(b)} The $\epsilon$-neighborhood $A_1^\epsilon \coloneqq A_{1,2}^\epsilon \cup A_{1,6}^\epsilon \cup A_{1,1}^\epsilon$ of $A_1$. \textbf{(c)} The $\epsilon$-neighborhood $A_2^\epsilon$ of $A_2$ is symmetric to $A_1^\epsilon$ with respect to the $\xi_1$-axis.}
\end{figure}

We next define an $\epsilon$-neighborhood $\A_k^{\epsilon}$ of $\A_k$ on which the filter $m_k$ will be supported. In particular, for $k=1$, let
\begin{align}
  \label{eq:A_1_epsilon}
  \left\{
  \begin{aligned}
  &A_{1,2}^{\epsilon} \coloneqq \left\{\xi\in S\setminus S_\epsilon: \left| \xi_2\right|\le \epsilon  \right\},\\
  &A_{1,6}^{\epsilon} \coloneqq \left\{\xi\in S\setminus S_\epsilon: \frac{\sqrt{3}}{2}\left|\xi_2-\frac{\xi_1}{\sqrt{3}} \right|\le \epsilon  \right\},\\
  &A_{1,1}^{\epsilon} \coloneqq \left\{\xi\in S\setminus S_\epsilon: \xi_1\le 0, \frac{\xi_1+2\epsilon}{\sqrt{3}} \le \xi_2\le -\epsilon  \right\} \cup \left\{\xi\in S\setminus S_\epsilon: \xi_1\ge 0, \frac{\xi_1-2\epsilon}{\sqrt{3}} \ge \xi_2\ge \epsilon  \right\},\\
  &A_{1}^\epsilon \coloneqq A_{1,2}^\epsilon \cup A_{1,6}^\epsilon \cup A_{1,1}^\epsilon,\\
  &\A_1^\epsilon \coloneqq A_1^\epsilon + \Lambda^*.
  \end{aligned}\right.
\end{align}
See Figure~\ref{fig:A_1_epsilon} for the definition of the corresponding sets in \eqref{eq:A_1_epsilon}. The $\epsilon$-neighborhood $\A_2^\epsilon$ of $\A_2$ is defined to be symmetric to $\A_1^\epsilon$ with respect to the $\xi_1$-axis (see Figure~\ref{fig:A_2_epsilon}), and the remaining $\A_k^\epsilon$, $k=3, \ldots, 6$, are obtained from rotating $\A_1^\epsilon$ or $\A_2^\epsilon$ by $\pm \pi/3$.

Define $\N_1\in L^2(\R^2/\Lambda^*)$ on the reciprocal cell $S$ as follows
\begin{align}
  \label{eq:N_1_def}
  \N_1(\xi) = \left\{
  \begin{aligned}
    & 0, \quad && \xi \not\in A_1^\epsilon\\
    & 1, \quad && \xi \in A_{1,1}^\epsilon\\
    & \cos\left(\nu\left(\frac{\xi_2}{2\epsilon}+\frac{1}{2}\right)\cdot \frac{\pi}{2}\right),\quad && \xi \in A_{1,2}^\epsilon, ~\xi_1 < 0,\\
    & \cos\left(\nu\left(-\frac{\xi_2}{2\epsilon}+\frac{1}{2}\right)\cdot \frac{\pi}{2}\right),\quad && \xi \in A_{1,2}^\epsilon, ~\xi_1 > 0,\\
    & \cos\left(\nu\left[-\left(\xi_2-\frac{\xi_1}{\sqrt{3}}\right)\frac{\sqrt{3}}{4\epsilon}+\frac{1}{2} \right]\cdot \frac{\pi}{2} \right), \quad && \xi\in A_{1,6}^\epsilon, ~\xi_1<0,\\
    & \cos\left(\nu\left[\left(\xi_2-\frac{\xi_1}{\sqrt{3}}\right)\frac{\sqrt{3}}{4\epsilon}+\frac{1}{2} \right]\cdot \frac{\pi}{2} \right), \quad && \xi\in A_{1,6}^\epsilon, ~\xi_1>0,
  \end{aligned}\right.
\end{align}
where $\nu : \R \to \R$ is a continuous function defined as
\begin{align}
  \label{eq:nu_def}
  \nu(x) = \left\{
  \begin{aligned}
    &0,  \quad && x<0,\\
    &x, \quad && x\in [0,1],\\
    &1, \quad && x > 1.\\
  \end{aligned}\right.
\end{align}
In particular, $\nu(x) + \nu(1-x) = 1$, $\forall x\in [0,1]$. Similar to the definition of $\A_k^{\epsilon}$, we define $\N_2$ to be symmetric to $\N_1$ with respect to the $\xi_1$-axis, and $\N_k$, $k=3, \ldots, 6$, are rotated versions of $\N_1$ or $\N_2$ by $\pm\pi/3$. Define the filters $m_k\in L^2(\R^2/\Lambda^*)$ as
\begin{align}
  \label{eq:mk_frame_def}
  m_k(\xi) = \M_k(\xi)e^{i\left<\xi,\eta_k^{\text{fr}}\right>}, \quad \forall k\in \left\{ 0,\ldots,6 \right\},
\end{align}
where 
\begin{align}
  \label{eq:Mk_frame_eta}
  \left\{
  \begin{aligned}
    &\M_k(\xi) \coloneqq \N_k(\xi)\RR(\xi), \quad \forall k\in \left\{1, \ldots, 6 \right\}\\
    & \RR(\xi)\coloneqq \left(1-\M_0^2(\xi)\right)^{1/2},\\
    &\eta_0^{\text{fr}} = (0,0), ~\eta_1^{\text{fr}} = (2,0), ~\eta_2^{\text{fr}} = (1,\sqrt{3}), ~\eta_3^{\text{fr}} = (-1,\sqrt{3}),\\
    &\eta_4^{\text{fr}} = (-2,0), ~\eta_5^{\text{fr}} = (-1,-\sqrt{3}), ~\eta_6^{\text{fr}} = (1,-\sqrt{3}).
  \end{aligned}\right. 
\end{align}

\begin{figure}
    \centering
    \begin{subfigure}[t]{0.25\textwidth}
        \includegraphics[width = \textwidth]{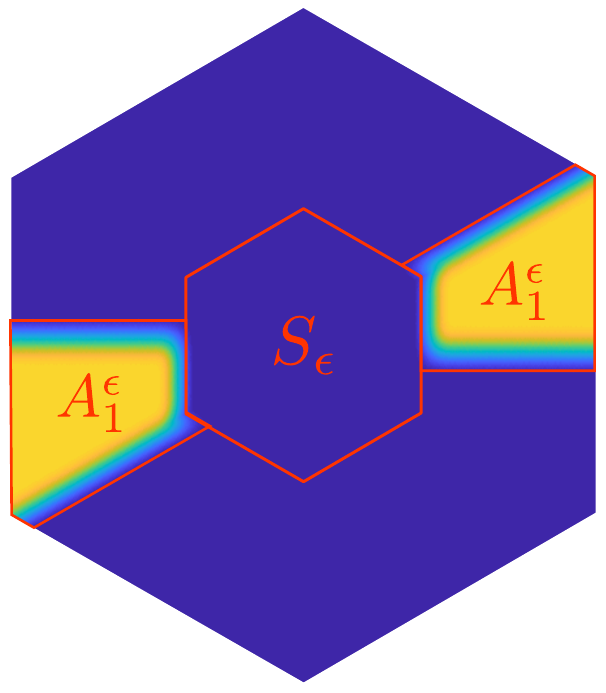}
        \caption{}
        \label{fig:m1_frame_parula}
    \end{subfigure}
     ~~
    \begin{subfigure}[t]{0.25\textwidth}
        \includegraphics[width = \textwidth]{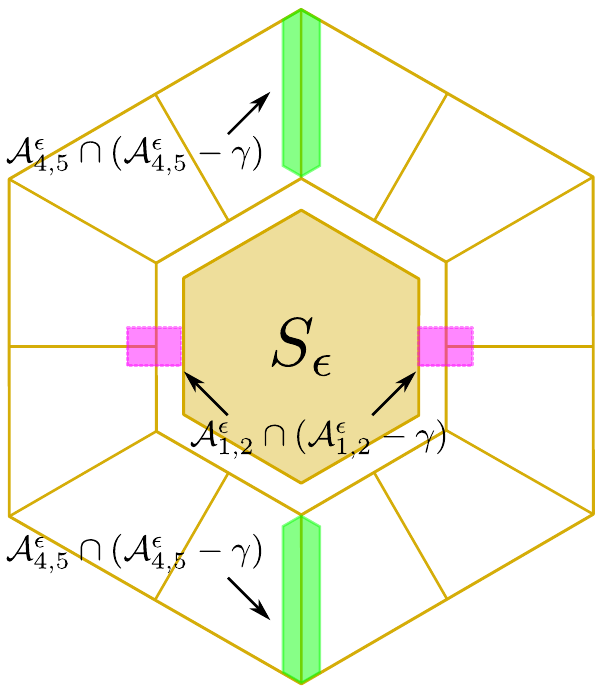}
        \caption{}
        \label{fig:proof_frame_PR}
    \end{subfigure}
    ~~
    \begin{subfigure}[t]{0.25\textwidth}
        \includegraphics[width = \textwidth]{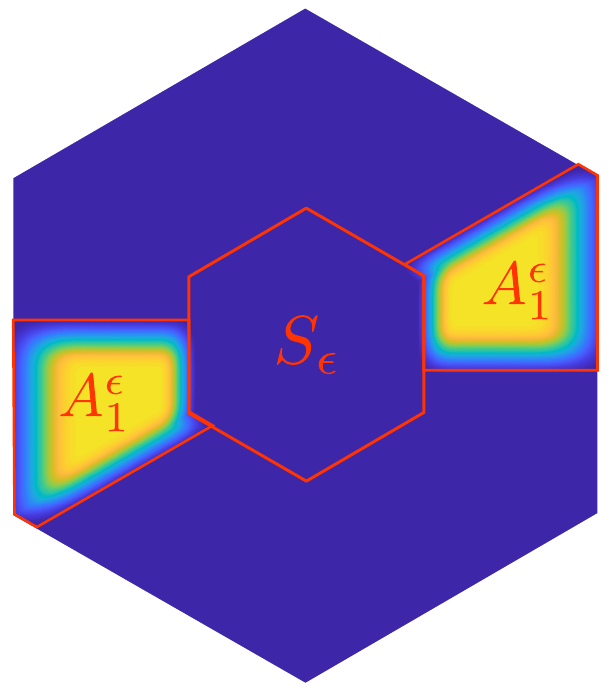}
        \caption{}
        \label{fig:psi_k_frame}
    \end{subfigure}
    \caption{\small \textbf{(a)} $\M_1\in L^2(\R^2/\Lambda^*)$ is supported on $A_1^\epsilon$ in the reciprocal cell $S$, and $\M_0(\xi) = 1, \forall \xi \in A_{1,1}^\epsilon$. \textbf{(b)} Visual illustration for the proof of Proposition~\ref{prop:PR_mk_frame}. \textbf{(c)} $\widehat{\psi_{1}^1}$ is a continuous function, and $\supp\left(\widehat{\psi_{1}^1}\right) = A_1^\epsilon$.}
\end{figure}

Figure~\ref{fig:m1_frame_parula} displays the definition of $\M_1(\xi)$ on the reciprocal cell $S$.  The following proposition shows that $\left(m_k\right)_{0\le k \le 6}$ satisfies the PR condition \eqref{eq:PR_hex_identity_sum_frame} and \eqref{eq:PR_hex_shift_cancel_frame}.
\begin{prop}
  \label{prop:PR_mk_frame}
  The normalized filter bank $\left(m_k\right)_{0\le k\le 6}$ defined in \eqref{eq:mk_frame_def} satisfies the PR condition \eqref{eq:PR_hex_identity_sum_frame} and \eqref{eq:PR_hex_shift_cancel_frame}. Moreover,
  \begin{align}
    \supp(m_0) = S_\epsilon + \Lambda^*\eqqcolon \CS^\epsilon, \quad \text{and}~~ \supp(m_k) = \A_k^\epsilon, ~~\forall k\in \left\{ 1, \ldots, 6 \right\}.
  \end{align}
\end{prop}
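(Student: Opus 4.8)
The plan is to invoke the reformulated criterion of Proposition~\ref{prop:PR_hex_frame}, so that everything reduces to checking the identity-summation condition \eqref{eq:PR_hex_identity_sum_frame} and the shift-cancellation condition \eqref{eq:PR_hex_shift_cancel_frame} for the three nontrivial shift vectors $\gamma$ (the hexagon edge-midpoints $(\pi,0)$ and $(\pm\pi/2,\sqrt3\pi/2)$) that enter \eqref{eq:PR_hex_shift_cancel_frame}. The support statements I would dispose of first, reading them off directly by inspecting where each factor is nonzero: $\M_0$ vanishes outside $A_0$ by \eqref{eq:m0_frame_def}, each $\N_k$ vanishes outside $A_k^\epsilon$ by \eqref{eq:N_1_def} and the symmetry defining $\N_2,\dots,\N_6$, and $\RR=(1-\M_0^2)^{1/2}$ is supported where $\M_0<1$; hence $m_k=\N_k\RR\,e^{i\langle\cdot,\eta_k^{\text{fr}}\rangle}$ is supported exactly on $\A_k^\epsilon$.

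For the identity summation I would substitute $\M_k=\N_k\RR$ and use $\RR^2=1-\M_0^2$ to obtain $\sum_{k=0}^6|m_k|^2=\M_0^2+(1-\M_0^2)\sum_{k=1}^6\N_k^2$, which equals $1$ if and only if $\sum_{k=1}^6\N_k^2\equiv1$ wherever $\M_0<1$, i.e.\ on the ring $S\setminus S_\epsilon$. The crux is then that the $\N_k$ form a squared partition of unity on this ring: on each ``core'' piece a single $\N_k\equiv1$, while on an overlap strip $A_{k,\cdot}^\epsilon$ shared by two angularly adjacent fans only those two $\N$'s are nonzero, and there the cosine definition \eqref{eq:N_1_def} together with the identity $\nu(x)+\nu(1-x)=1$ from \eqref{eq:nu_def} turns the two values into $\cos$ and $\sin$ of a common angle, whose squares sum to $1$. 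I would cut the casework down to $\N_1$ and one neighbour using the $\xi_1$-reflection ($\N_2$ mirrors $\N_1$) and the $\pi/3$-rotation symmetry of the construction.

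For the shift cancellation, writing $m_k=\M_k e^{i\langle\cdot,\eta_k^{\text{fr}}\rangle}$ with $\M_k\ge0$ real reduces the sum to $\sum_k\M_k(\xi)\M_k(\xi+\gamma)\,e^{-i\langle\gamma,\eta_k^{\text{fr}}\rangle}$ for each $\gamma$. First I would eliminate the refinement filter: since $A_0=\tfrac12 S$ has apothem $\pi/2$ while $|\gamma|=\pi$, the hexagons $\overline{A_0}$ and $\overline{A_0}-\gamma$ meet only along a set of measure zero, so the $k=0$ term drops out a.e. For $k\ge1$ the common factor $\RR(\xi)\RR(\xi+\gamma)$ splits off, leaving $\sum_{k=1}^6\N_k(\xi)\N_k(\xi+\gamma)\,e^{-i\langle\gamma,\eta_k^{\text{fr}}\rangle}$. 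A support analysis shows $\N_k(\xi)\N_k(\xi+\gamma)$ is non-negligible only on a small region near $\gamma/2\in\partial S_\epsilon$, where exactly the two fans $\{k_1,k_2\}$ adjacent across the hexagon edge with midpoint $\gamma/2$ are active; there the reflection interchanging the two fans swaps the $\cos$ and $\sin$ roles in \eqref{eq:N_1_def}, forcing $\N_{k_1}(\xi)\N_{k_1}(\xi+\gamma)=\N_{k_2}(\xi)\N_{k_2}(\xi+\gamma)$, and since the translations in \eqref{eq:Mk_frame_eta} are arranged so that $e^{-i\langle\gamma,\eta_{k_1}^{\text{fr}}-\eta_{k_2}^{\text{fr}}\rangle}=-1$, the two equal-magnitude terms cancel.

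The main obstacle is precisely the geometric bookkeeping in this last step. I expect the real work to lie in (i) showing that for each of the three cosets the only surviving contribution is from the correct adjacent pair, and that no fan overlaps itself under the shift off the edge strip, and (ii) matching each shift to its pair with the correct sign: the $\pm\pi/3$ labelling of the fan groups in \eqref{eq:hex_lattice_hp_2} must be reconciled with the explicit $\eta_k^{\text{fr}}$ of \eqref{eq:Mk_frame_eta} so that all three phase relations $e^{-i\langle\gamma,\eta_{k_1}^{\text{fr}}-\eta_{k_2}^{\text{fr}}\rangle}=-1$ hold simultaneously. Once the pairing is pinned down, the $\pi/3$-rotational symmetry of the whole filter bank lets me verify a single representative shift and transport the conclusion to the remaining two.
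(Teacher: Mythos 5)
Your proposal is correct and follows essentially the same route as the paper's proof: read the supports off the definitions, reduce the identity summation to $\sum_k\N_k^2=1$ on $S\setminus S_\epsilon$ via $\RR^2=1-\M_0^2$ and the $\cos^2+\sin^2$ identity coming from $\nu(x)+\nu(1-x)=1$, and verify shift cancellation for one representative $\gamma$ (the paper also takes $\gamma=(\pi,0)$) by locating the overlap near the adjacent pair of fans and using $e^{-i\langle\gamma,\eta_{k_1}^{\text{fr}}-\eta_{k_2}^{\text{fr}}\rangle}=-1$, with the $\pi/3$-rotation symmetry handling the remaining cosets. Your explicit elimination of the $k=0$ term and the factoring of $\RR(\xi)\RR(\xi+\gamma)$ match the paper's computation; the geometric bookkeeping you flag as the main work is exactly what the paper disposes of by pointing to Figure~\ref{fig:proof_frame_PR}.
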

\begin{proof}
  The support of $m_k$, $k=0, \ldots, 6$, is clear from the definition \eqref{eq:m0_frame_def}, \eqref{eq:N_1_def} and \eqref{eq:Mk_frame_eta}. We next prove the identity summation condition \eqref{eq:PR_hex_identity_sum_frame} and the shift cancellation condition \eqref{eq:PR_hex_shift_cancel_frame}.
  \begin{itemize}
  \item \textit{Identity summation.} Given $\xi$ in the reciprocal cell $S$:
    \begin{enumerate}
    \item If $\xi \in S_\epsilon$, then $\sum_{k=0}^6\left|m_k(\xi)\right|^2 = \M_0^2(\xi) = 1$.
    \item If $\xi \in A_{j,j}^\epsilon$, then $\sum_{k=0}^6\left|m_k(\xi)\right|^2 = \M_j^2(\xi)+\M_0^2(\xi) = \N_j^2(\xi)\RR^2(\xi)+ \M_0^2(\xi)=1$.
    \item If $\xi \in A_{j,l}^\epsilon$, where $A_j$ and $A_l$ are adjacent, we consider without loss of generality the following two particular cases:
      \begin{enumerate}[(a)]
      \item $\xi \in A_{1,2}^\epsilon$ and $\xi_1<0$. In this case,
        \begin{align*}
          &\N_1^2(\xi) + \N_2^2(\xi) = \cos^2\left(\nu\left(\frac{\xi_2}{2\epsilon}+\frac{1}{2}\right)\cdot \frac{\pi}{2}\right) + \cos^2\left(\nu\left(-\frac{\xi_2}{2\epsilon}+\frac{1}{2}\right)\cdot \frac{\pi}{2}\right)\\
          = & \cos^2\left(\nu\left(\frac{\xi_2}{2\epsilon}+\frac{1}{2}\right)\cdot \frac{\pi}{2}\right) + \sin^2\left(\nu\left(\frac{\xi_2}{2\epsilon}+\frac{1}{2}\right)\cdot \frac{\pi}{2}\right) = 1,
        \end{align*}
where the second equation holds because $\nu(x)+\nu(1-x)=1$. We thus have
\begin{align*}
  \sum_{k=0}^6\left|m_k(\xi)\right|^2 = \M_0^2(\xi) + \RR^2(\xi) \left(\N_1^2(\xi) + \N_2^2(\xi)\right) = 1.
\end{align*}
        \item $\xi \in A_{1,6}^\epsilon$ and $\xi_1<0$. We can similarly prove that
          \begin{align*}
            \sum_{k=0}^6\left|m_k(\xi)\right|^2 = \M_0^2(\xi) + \RR^2(\xi) \left(\N_1^2(\xi) + \N_6^2(\xi)\right) = 1.          
          \end{align*}
      \end{enumerate}
    \end{enumerate}
  \item \textit{Shift cancellation.} Consider without loss of generality the particular case $\gamma = (\pi,0)\in\Lambda^*\setminus \Gamma^{\text{fr}*}$. The sum $\sum_{k=0}^6\overline{m_k(\xi+\gamma)}m_k(\xi)$ can only be supported on $\A_{1,2}^\epsilon\cap (\A_{1,2}^\epsilon-\gamma)$ and $\A_{4,5}^\epsilon\cap (\A_{4,5}^\epsilon-\gamma)$ (see Figure~\ref{fig:proof_frame_PR}.) We show only the case when $\xi\in S\cap \A_{1,2}^\epsilon\cap (\A_{1,2}^\epsilon-\gamma) \cap \left\{ \xi_1\le 0\right\}$, i.e., the pink region on the left in Figure~\ref{fig:proof_frame_PR}, while the remaining can be proved similarly.
    \begin{align*}
      &\sum_{k=0}^6m_k(\xi)\overline{m_k(\xi+\gamma)} = m_1(\xi)\overline{m_1(\xi+\gamma)} + m_2(\xi)\overline{m_2(\xi+\gamma)}\\
      = & \M_1(\xi)e^{i\left<\eta_1^\text{fr},\xi\right>}\M_1(\xi+\gamma)e^{-i\left<\eta_1^\text{fr},\xi+\gamma\right>} + \M_2(\xi)e^{i\left<\eta_2^\text{fr},\xi\right>}\M_2(\xi+\gamma)e^{-i\left<\eta_2^\text{fr},\xi+\gamma\right>}\\
      = & \RR(\xi)\RR(\xi+\gamma)\N_1(\xi)\N_1(\xi+\gamma)e^{-i\left<\eta^\text{fr}_1,\gamma \right>} + \RR(\xi)\RR(\xi+\gamma)\N_2(\xi)\N_2(\xi+\gamma)e^{-i\left<\eta^\text{fr}_2,\gamma \right>}\\
      = & \RR(\xi)\RR(\xi+\gamma)\left\{\cos\left[\nu\left(\frac{\xi_2}{2\epsilon}+\frac{1}{2}\right)\cdot \frac{\pi}{2}\right] \cos\left[\nu\left(-\frac{\xi_2}{2\epsilon}+\frac{1}{2}\right)\cdot \frac{\pi}{2}\right]e^{-i\left<\eta^\text{fr}_1,\gamma \right>}  \right.\\
      &\hspace{7em}\left. + \cos\left[\nu\left(-\frac{\xi_2}{2\epsilon}+\frac{1}{2}\right)\cdot \frac{\pi}{2}\right] \cos\left[\nu\left(\frac{\xi_2}{2\epsilon}+\frac{1}{2}\right)\cdot \frac{\pi}{2}\right]e^{-i\left<\eta^\text{fr}_2,\gamma \right>} \right\}\\
      = & \RR(\xi)\RR(\xi+\gamma)\cos\left[\nu\left(\frac{\xi_2}{2\epsilon}+\frac{1}{2}\right)\cdot \frac{\pi}{2}\right] \cos\left[\nu\left(-\frac{\xi_2}{2\epsilon}+\frac{1}{2}\right)\cdot \frac{\pi}{2}\right]e^{-i\left<\eta^\text{fr}_1,\gamma \right>}\left(1+ e^{-i\left<\eta^\text{fr}_2-\eta^\text{fr}_1,\gamma \right>}\right)\\
      = & 0,
    \end{align*}
    where the last equation holds because
    \begin{align*}
      1+ e^{-i\left<\eta^\text{fr}_2-\eta^\text{fr}_1,\gamma \right>} = 1 + e^{-i\left<(1,\sqrt{3})-(2,0),(\pi,0) \right>} = 0
    \end{align*}
  \end{itemize}
This concludes the proof of the proposition.
\end{proof}

The following theorem shows that the filter bank $(M_k, M_k, \Lambda \to \Gamma_k^{\text{\normalfont fr}})_{k=0}^6$ generates alias-free hexagonal wavelet frames with continuous Fourier transforms.
\begin{thm}
  Given $\epsilon\in(0,\pi/(4+\sqrt{3}))$, the PR filter bank $(M_k, M_k, \Lambda \to \Gamma_k^{\text{\normalfont fr}})_{k=0}^6$ defined in \eqref{eq:gamma_frame} and \eqref{eq:mk_frame_def} generates a Parseval frame $\left\{\psi_{j,n}^k \right\}_{1\le k\le 6,j\in\Z,n\in 2^{j-1}\Gamma_k^{\text{fr}}}$. Moreover, the scaling function $\phi$ and wavelets $\left\{\psi^k\right\}_{1\le k\le 6}$ satisfy the following properties
  \begin{enumerate}
  \item Both $\phi$ and $\psi^k$ are well-localized in the frequency domain. More specifically, $\supp\left(\widehat{\phi_{1}}\right) = A_0$, and $\supp\left(\widehat{\psi^k_{1}}\right) = A_k^\epsilon, ~\forall k = 1, \ldots, 6$, where $A_k^\epsilon$ is the $\epsilon$-neighborhood of $A_k$ defined in \eqref{eq:A_1_epsilon}.
  \item The Fourier transforms $\widehat{\phi}$,  $\widehat{\psi^k}$ are continuous.
  \item The scaling function $\phi$ and the set $\left\{\left(\psi^1,\psi^2 \right), \left(\psi^3,\psi^4 \right), \left(\psi^5,\psi^6 \right)  \right\}$ are invariant by a $2\pi/3$ rotation.
  \end{enumerate}
\end{thm}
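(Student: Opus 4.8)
The plan is to obtain the Parseval-frame property from the machinery already assembled, and then to establish the three additional properties by a direct analysis of the infinite product defining $\widehat{\phi}$.

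For the frame property, I would first note that the bank is normalized so that $m_0(0)=\M_0(0)=1$, whence $M_0(0)=\sqrt{|\Lambda/\Gamma_0^{\text{fr}}|}=|D|^{1/2}$ with $D=2I$ and $\Gamma_0^{\text{fr}}=2\Lambda$. Proposition~\ref{prop:PR_mk_frame} shows that $(m_k)_{0\le k\le 6}$ satisfies \eqref{eq:PR_hex_identity_sum_frame}--\eqref{eq:PR_hex_shift_cancel_frame}, which by Proposition~\ref{prop:PR_hex_frame} is precisely the statement that $(M_k,M_k,\Lambda\to\Gamma^{\text{fr}})_{k=0}^6$ is a PR filter bank. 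Theorem~\ref{thm:MRA-filter} then applies verbatim (only PR and the normalization $M_0(0)=|D|^{1/2}$ are needed for the Parseval conclusion, not critical sampling), giving that $\{\psi_{j,n}^k\}$ is a Parseval frame. This part requires no new computation.

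The support and continuity claims both hinge on evaluating $\widehat{\phi}$ on the reciprocal cell $S$. From \eqref{eq:def_phi} with $D=2I$ one has $\widehat{\phi}(\xi)=\prod_{p\ge 1}\M_0(2^{-p}\xi)$. Since $\epsilon<\pi/4$, for every $\xi\in S$ and every $p\ge 2$ the point $2^{-p}\xi$ lies in $\tfrac14 S\subset S_\epsilon$, where $\M_0\equiv 1$; hence the product collapses to the single factor $\widehat{\phi}(\xi)=\M_0(\xi/2)$ on $S$. I would then record two consequences: $\widehat{\phi}\equiv 1$ on $A_0$ (as $\xi\in A_0$ forces $\xi/2\in\tfrac14 S\subset S_\epsilon$), and $\widehat{\phi}>0$ on the interior of every $A_k^\epsilon$ (as $\xi/2$ then lies in the interior of $A_0$, where $\M_0>0$). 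Moreover $\widehat{\phi}$ vanishes outside $\overline{S}$: if $\xi\in 2^n\overline{S}\setminus 2^{n-1}\overline{S}$ for some $n\ge 1$, the factor $\M_0(2^{-n}\xi)$ is zero because $2^{-n}\xi\in\overline{S}\setminus\overline{A_0}$. Combining these with the refinement and wavelet relations $\widehat{\phi_1}(\xi)=\widehat{\phi}(\xi)M_0(\xi)$ and $\widehat{\psi_1^k}(\xi)=\widehat{\phi}(\xi)M_k(\xi)$, together with $\supp(M_0)\cap S=\overline{A_0}$ and $\supp(M_k)\cap S=\overline{A_k^\epsilon}$ from Proposition~\ref{prop:PR_mk_frame}, the confinement of $\widehat{\phi}$ to $\overline{S}$ annihilates all non-origin $\Lambda^*$-copies of these periodic supports, yielding $\supp(\widehat{\phi_1})=A_0$ and $\supp(\widehat{\psi_1^k})=A_k^\epsilon$, which is property~(1). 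For continuity (property~(2)), the same product representation gives that on any ball $B(0,R)$ all but finitely many factors $\M_0(2^{-p}\xi)$ equal $1$, so $\widehat{\phi}$ is locally a finite product of the continuous $\Lambda^*$-periodic function $\M_0$ (continuous across $\partial S$ since it vanishes there) and is therefore continuous; continuity of $\widehat{\psi^k}$ then reduces via \eqref{eq:def_psi} to continuity of $M_k=2\M_k e^{i\langle\cdot,\eta_k^{\text{fr}}\rangle}$, i.e. of $\M_k=\N_k\,\RR$ with $\RR=(1-\M_0^2)^{1/2}$.

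Finally, rotation invariance (property~(3)) is inherited from the filters: $\M_0$ is $\pi/3$-rotation invariant by construction, so the product formula makes $\widehat{\phi}$ (hence $\phi$) invariant under the rotation $R$ by $2\pi/3$; and since $\widehat{\psi^k}(R\xi)=|D|^{-1/2}\widehat{\phi}(\xi/2)\,M_k(R\xi/2)$ with $\widehat{\phi}$ $R$-invariant, it suffices to see that $M_k\circ R$ is again some $M_{k'}$. The moduli $\{(\M_1,\M_2),(\M_3,\M_4),(\M_5,\M_6)\}$ are $\pm\pi/3$-rotated copies by construction, and the phase vectors in \eqref{eq:Mk_frame_eta} are the six vertices of a regular hexagon with $R\eta_k^{\text{fr}}=\eta_{k+2}^{\text{fr}}$, so the three pairs are permuted cyclically. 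The main obstacle I anticipate is the continuity verification for $\M_k=\N_k\,\RR$: one must check that the piecewise-cosine definition \eqref{eq:N_1_def} glues continuously across the internal interfaces of $A_{k,2}^\epsilon,A_{k,6}^\epsilon,A_{k,1}^\epsilon$ and tends to $0$ along $\partial A_k^\epsilon$, the degenerate inner boundary (where $\N_k$ need not match) being harmless because $\RR\to 0$ there. This gluing check is the one delicate step; everything else is bookkeeping built on the product formula for $\widehat{\phi}$.
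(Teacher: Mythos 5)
Your proposal is correct and takes essentially the same route as the paper: the Parseval property is obtained by invoking Theorem~\ref{thm:MRA-filter} for the PR bank of Proposition~\ref{prop:PR_mk_frame}, the support and continuity claims follow from collapsing the infinite product for $\widehat{\phi}$ to the single factor $\M_0(\xi/2)$ on $S$ (using $\epsilon<\pi/4$ so that $\tfrac14 S\subset S_\epsilon$), and rotation invariance is read off from the $2\pi/3$-invariance of the moduli and the cyclic permutation of the phase vectors $\eta_k^{\text{fr}}$. If anything, you are more explicit than the paper on two points it leaves to the reader --- the vanishing of $\widehat{\phi}$ outside $\overline{S}$, which kills the non-origin $\Lambda^*$-copies of the periodic filter supports, and the continuity gluing of $\N_k\,\RR$ across the internal interfaces and the inner boundary where $\RR=0$.
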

\begin{proof}
  \begin{enumerate}
  \item Since $\epsilon<\pi/(4+\sqrt{3})$, $\M_0(\xi) = 1$ on $S_\epsilon$, and $\M_0(\xi) = 0$ on $S\setminus A_0$ \eqref{eq:m0_frame_def}, we have
    \begin{align}
      \label{eq:spt_phi_frame}
      \widehat{\phi_{1}}(\xi)&= 2\widehat{\phi}(2\xi) = 2\prod_{p=0}^\infty \frac{1}{2}M_0(2^{-p}\xi) = 2\prod_{p=0}^\infty m_0(2^{-p}\xi) = \left\{
        \begin{aligned}
          &2m_0(\xi), &&\quad \xi \in A_0,\\
          &0, &&\quad \xi \not \in A_0.
        \end{aligned}\right.
    \end{align}
    \begin{align}
      \label{eq:spt_psi_k_frame}
      \widehat{\psi_{1}^k}(\xi) = 2\widehat{\psi^k}(2\xi) = \widehat{\phi_{1}}\left(\xi/2\right)m_k(\xi) = \left\{
      \begin{aligned}
        &2m_0(\xi/2)m_k(\xi), &&\quad \xi\in A^\epsilon_k,\\
        & 0, &&\quad \xi \not \in A^\epsilon_k.
      \end{aligned}\right.
    \end{align}
Thus $\supp\left(\widehat{\phi_{1}}\right) = A_0$, and $\supp\left(\widehat{\psi^k_{1}}\right) = A_k^\epsilon, ~\forall k = 1, \ldots, 6$.
  \item It is clear from \eqref{eq:spt_phi_frame} and \eqref{eq:spt_psi_k_frame} that $\widehat{\phi_{1}}$ and $\widehat{\psi_{1}^k}$ are continuous on their support $A_0$ and $A_k^\epsilon$. Moreover, it is easy to verify from the definition of $\M_0$ \eqref{eq:m0_frame_def} and $\M_k$ \eqref{eq:N_1_def}\eqref{eq:Mk_frame_eta}
that  $\widehat{\phi_{1}}$ and $\widehat{\psi_{1}^k}$ both vanish on the boundary of their corresponding supports (see Figure~\ref{fig:psi_k_frame}).
\item The rotation invariance of $\phi$ and  $\left\{\left(\psi^1,\psi^2 \right), \left(\psi^3,\psi^4 \right), \left(\psi^5,\psi^6 \right)  \right\}$ is guaranteed by the rotation invariance of the moduli $\M_0$, $\left\{\left(\\M_1,\M_2 \right), \left(\M_3,\M_4 \right), \left(\M_5,\M_6 \right)  \right\}$ and the phases  $e^{\left<\eta_0^{\text{fr}},\cdot\right>}$, $\{(e^{\left<\eta^{\text{fr}}_1,\cdot\right>},e^{\left<\eta^{\text{fr}}_2,\cdot\right>}), (e^{\left<\eta^{\text{fr}}_3,\cdot\right>},e^{\left<\eta^{\text{fr}}_4,\cdot\right>}), (e^{\left<\eta^{\text{fr}}_5,\cdot\right>},e^{\left<\eta^{\text{fr}}_6,\cdot\right>})\}$.
  \end{enumerate}
\end{proof}

\begin{remark}
  \normalfont Given any positive integer $p\in\Z$, one can easily modify the construction of $\M_k$ to obtain $\widehat{\phi}, \widehat{\psi^k}\in C^p(\R^2,\mathbb{C})$. More specifically, we can replace $\M_0$ by a $C^p$ function satisfying \eqref{eq:m0_frame_def}, and redefine $\nu$ in \eqref{eq:nu_def} on the interval $[0,1]$ such that it is $C^p$ and satisfies $\nu(x) + \nu(1-x) = 1$, $\forall x\in [0,1]$. Therefore $\phi$ and $\psi^k$ can be constructed to have arbitrarily fast spatial decay.
\end{remark}

Figure~\ref{fig:lvl0_frame_idx_1} and~\ref{fig:lvl0_frame_idx_0} display the wavelet function $\psi^1_{\text{fr}}$ and the scaling function $\phi_{\text{fr}}$ constructed in this section. Compared to their wavelet bases counterparts (Figure~\ref{fig:lvl0_ON1_idx_1},~\ref{fig:lvl0_ON2_idx_1} and Figure~\ref{fig:lvl0_ON1_idx_0},~\ref{fig:lvl0_ON2_idx_0}) explained in Section~\ref{sec:bdry_smoothing} and~\ref{sec:scaling_smoothing}, $\phi_{\text{fr}}$ and $\psi_{\text{fr}}^1$ have much better spatial and frequency localization, although the wavelet frame comes with a sacrifice of being slightly redundant (of factor 2.)

\section{Parabolic scaling law and cutting lemma}
\label{sec:cutting}

\begin{figure}
    \centering
    \begin{subfigure}[t]{0.24\textwidth}
        \includegraphics[height = 4cm]{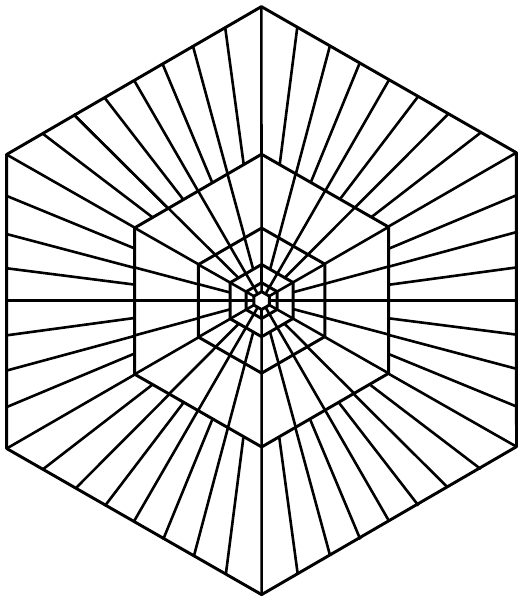}
        \caption{}
        \label{fig:parabolic}
    \end{subfigure}
     ~
    \begin{subfigure}[t]{0.72\textwidth}
        \includegraphics[height = 4cm]{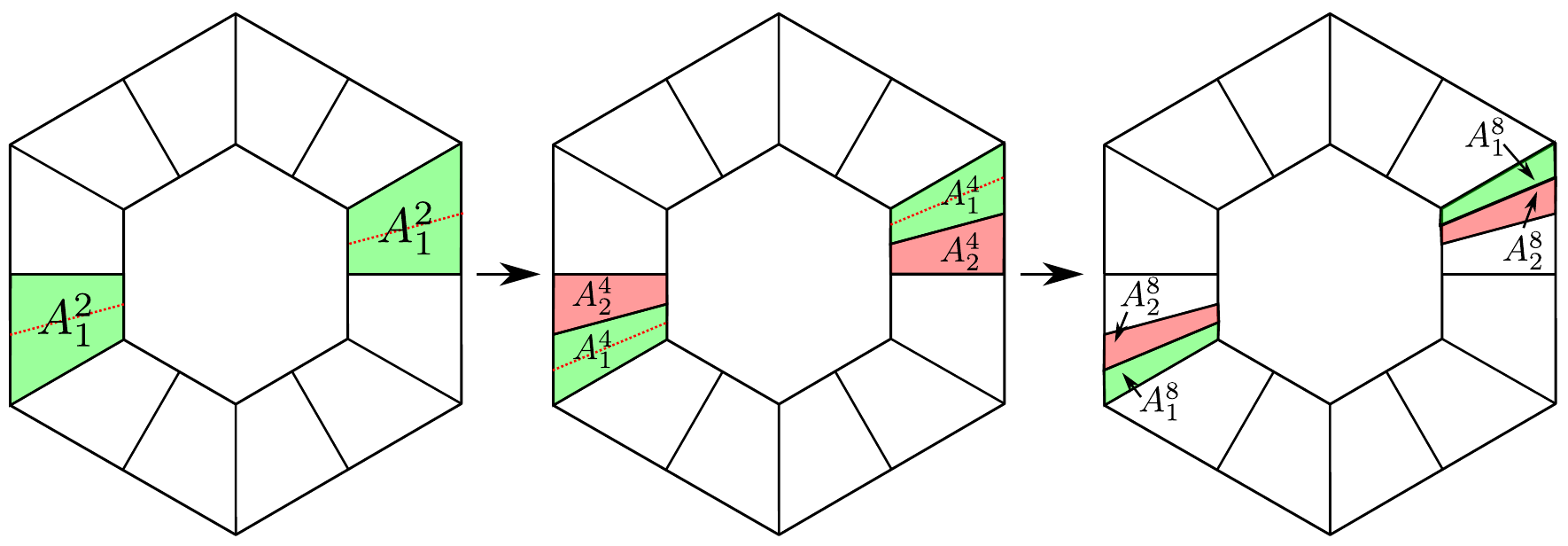}
        \caption{}
        \label{fig:cutting-demo}
    \end{subfigure}
    \caption{\small \textbf{(a)} Parabolic scaling law of the hexagonal wavelets. The number of orientations is doubled after every other dilation step. \textbf{(b)} Cutting the wavelet function $\widehat{\psi^1_{1}}$ supported mainly on $A_1^2$ into $\widehat{\psi^{1,k}_{1}}$ supported essentially on $A^4_k$, $k=1,2$. The wavelet $\widehat{\psi^{1,1}_{1}}$ can be further cut into $\widehat{\psi^{1,1,k}_{1}}$ supported mainly on $A^8_k$, $k=1,2$. }
\end{figure}

We discuss next how to obtain finer orientation selectivity in higher frequency rings to achieve the ``parabolic scaling'' law that is crucial in the construction of curvelets, shearlets, and contourlets. In essence, one needs to double the number of orientations every other dilation step by subdividing each $A_k^p$, $1\le k\le 3p$ (see Figure~\ref{fig:parabolic}.) The  techniques detailed in Section~\ref{sec:basis} and~\ref{sec:frame} can be easily generalized to such frequency partitions, but there is one caveat for the frame construction: we have to keep subsampling the filtered signals on the dense sublattice $\Gamma^{\text{fr}}$ in \eqref{eq:gamma_frame} to avoid introducing singular boundaries (see Section~\ref{sec:frame}.) This unfortunately increases the frame redundancy in higher decomposition levels (similar to the shearlet construction in \cite{easley2008sparse}.) In order to keep the redundancy of the wavelet frame (and basis) unchanged, we consider instead using the following 2D generalization of the cutting lemma from \cite{daubechies1992ten}:

\begin{lemma}
\label{lemma:cutting_basis}
  Suppose $\left\{\psi(\cdot - n)\right\}_{n\in\Lambda}$ is an orthonormal basis of $V\subset L^2(\R^d)$, and $\left(M_k,M_k,\Lambda\to\Gamma \right)_{k=1,2}$ is a 2-band PR filter bank, where $|\Lambda/\Gamma|=2$. Define $\psi^1$ and $\psi^2$ by
  \begin{align}
    \widehat{\psi^1}(\xi) = M_1(\xi)\widehat{\psi}(\xi), \quad \text{and}~~ \widehat{\psi^2}(\xi) = M_2(\xi)\widehat{\psi}(\xi),
  \end{align}
and let $V_1$ and $V_2$, respectively, be the subspaces of $V$ spanned by the $\Gamma$-translates of $\psi^1$ and $\psi^2$. Then the following hold:
\begin{itemize}
\item $\left\{\psi^1(\cdot-n)\right\}_{n\in \Gamma}$ and $\left\{\psi^2(\cdot-n)\right\}_{n\in \Gamma}$, respectively, are orthonormal bases of $V^1$ and $V^2$.
\item $V = V^1 \obot V^2$.
\end{itemize}
\end{lemma}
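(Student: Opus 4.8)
The plan is to move everything to the frequency domain and run the standard lattice-folding arguments, adapted to the index-two sublattice $\Gamma\subset\Lambda$. Write $\Gamma^*/\Lambda^* = \{0,\gamma_0\}$, where $2\gamma_0\in\Lambda^*$ since $|\Gamma^*/\Lambda^*| = |\Lambda/\Gamma| = 2$, and collect the filters into the modulation matrix $M(\xi) = \bigl(M_k(\xi+\gamma)\bigr)_{k\in\{1,2\},\,\gamma\in\{0,\gamma_0\}}$. First I would specialize Theorem~\ref{thm:PR} to this filter bank: here $\widetilde M_k = M_k$ and $\Gamma_k=\Gamma$, so the index $\eta_k\in\Gamma_k/\Gamma$ is trivial and the phases $e^{i\langle\eta_k,\xi+\gamma\rangle}$ equal $1$; the PR identity \eqref{eq:PR} then reduces to $M(\xi)^*M(\xi) = 2\,\mathrm{Id}_2$ for a.e.\ $\xi$. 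Because $M(\xi)$ is square this is equivalent to $M(\xi)M(\xi)^* = 2\,\mathrm{Id}_2$ as well, and reading off the entries of both products yields the conjugate-mirror conditions: the diagonal relations $|M_k(\xi)|^2 + |M_k(\xi+\gamma_0)|^2 = 2$ for $k=1,2$ (these also follow directly from Proposition~\ref{prop:crit_PR} with $\Gamma_k=\Gamma$), together with the cross relation $M_1(\xi)\overline{M_2(\xi)} + M_1(\xi+\gamma_0)\overline{M_2(\xi+\gamma_0)} = 0$.

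Next I would establish the orthonormality of each $\{\psi^k(\cdot-n)\}_{n\in\Gamma}$ from the standard criterion that the $L$-translates of $g$ are orthonormal iff $\sum_{\gamma\in L^*}|\widehat g(\xi+\gamma)|^2$ equals a.e.\ a positive constant $c_L$ fixed by the Fourier normalization, with $c_\Gamma = 2c_\Lambda$ because $[\Lambda:\Gamma]=2$. Applying this to $g=\psi^k$ and $L=\Gamma$, I split $\Gamma^* = \Lambda^*\sqcup(\gamma_0+\Lambda^*)$ and use the $\Lambda^*$-periodicity of $M_k$ together with the orthonormality of $\{\psi(\cdot-n)\}_{n\in\Lambda}$, which gives $\sum_{\lambda\in\Lambda^*}|\widehat\psi(\xi+\lambda)|^2 = c_\Lambda$; the sum collapses to $c_\Lambda\bigl(|M_k(\xi)|^2 + |M_k(\xi+\gamma_0)|^2\bigr) = 2c_\Lambda = c_\Gamma$ by the diagonal relation, so each family is orthonormal. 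The orthogonality $V^1\perp V^2$ is the same folding applied to $\langle \psi^1(\cdot-m),\psi^2(\cdot-n)\rangle$ for $m,n\in\Gamma$: these inner products are, up to the constant $c_\Lambda$, the $\Gamma^*$-Fourier coefficients of $M_1(\xi)\overline{M_2(\xi)} + M_1(\xi+\gamma_0)\overline{M_2(\xi+\gamma_0)}$, which vanishes identically by the cross relation, so every such inner product is zero. In particular $V^1\obot V^2\subseteq V$ is a well-defined orthogonal sum.

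The substantive step, and the one I expect to be the main obstacle, is the completeness statement $V = V^1\obot V^2$ (the reverse inclusion). Since the combined family $\{\psi^1(\cdot-n),\psi^2(\cdot-n)\}_{n\in\Gamma}$ has already been shown to be orthonormal and contained in $V$, it suffices to verify Parseval's identity $\sum_{n\in\Gamma}\bigl(|\langle f,\psi^1(\cdot-n)\rangle|^2 + |\langle f,\psi^2(\cdot-n)\rangle|^2\bigr) = \|f\|^2$ for every $f\in V$. Writing $\widehat f = C\,\widehat\psi$ with $C$ a $\Lambda^*$-periodic multiplier (the Fourier description of $V$), I fold each coefficient sum over $\Gamma^*$ as above, so that $\langle f,\psi^k(\cdot-n)\rangle$ becomes the $\Gamma^*$-Fourier coefficient of $G_k(\xi) = c_\Lambda\sum_{t\in\{0,\gamma_0\}}C(\xi+t)\overline{M_k(\xi+t)}$; applying Parseval on $L^2(S_\Gamma)$, summing over $k$, and invoking $M(\xi)^*M(\xi) = 2\,\mathrm{Id}_2$ on the vector $(C(\xi),C(\xi+\gamma_0))^{\!\top}$ makes the cross terms cancel and gives $\sum_k|G_k(\xi)|^2 = 2c_\Lambda^2\bigl(|C(\xi)|^2 + |C(\xi+\gamma_0)|^2\bigr)$, whose integral over $S_\Gamma$ recombines the two cosets into an integral over $S_\Lambda$ and reproduces exactly $\|f\|^2$. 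Equivalently one may argue the reverse inclusion directly: PR makes $M(\xi)$ invertible a.e.\ with uniformly bounded inverse, so any $\Lambda^*$-periodic $C$ solves $M_1B_1 + M_2B_2 = C$ with $B_1,B_2$ obtained by inverting $M(\xi)^{\!\top}$ pointwise, and $2\gamma_0\in\Lambda^*$ forces these solutions to be genuinely $\Gamma^*$-periodic (the $2\times2$ system at $\xi+\gamma_0$ is the system at $\xi$ with rows swapped, hence has the same solution), whence $\widehat f = C\widehat\psi = B_1\widehat{\psi^1} + B_2\widehat{\psi^2}$ exhibits $f\in V^1\obot V^2$. The real content lies in this last step — keeping track of the $\Gamma^*$ versus $\Lambda^*$ folding, the relation $c_\Gamma = 2c_\Lambda$, and the $\Gamma^*$-periodicity of the reconstructed multipliers; the orthonormality and orthogonality are routine consequences of the conjugate-mirror conditions.
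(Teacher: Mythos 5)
The paper does not actually supply a proof of Lemma~\ref{lemma:cutting_basis}; it only remarks that the 1D analogue is a standard tool from \cite{daubechies1992ten}, so there is no in-paper argument to compare against. Your proof is correct and is exactly the expected generalization of that standard argument: specializing Theorem~\ref{thm:PR} to the $2\times 2$ modulation matrix $M(\xi)=\bigl(M_k(\xi+\gamma)\bigr)_{k,\gamma}$ and using both $M^*M=2\,\mathrm{Id}$ and $MM^*=2\,\mathrm{Id}$ gives the diagonal and cross relations, the $\Gamma^*$-versus-$\Lambda^*$ folding with $c_\Gamma=2c_\Lambda$ handles orthonormality and mutual orthogonality, and either of your two completeness arguments (the Parseval computation, or pointwise inversion of $M(\xi)^{\top}$ with the row-swap observation showing the multipliers $B_k$ are genuinely $\Gamma^*$-periodic and square-integrable since $M(\xi)^{-1}=\tfrac12 M(\xi)^*$ is uniformly bounded) correctly closes the reverse inclusion $V\subseteq V^1\obot V^2$.
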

The 1D analogue of this lemma is one of the basic tools in building wavelet bases from multiresolution ladders of $L^2(\R)$. Similarly we have the following cutting lemma for wavelet frames:

\begin{lemma}
\label{lemma:cutting_frame}
  Suppose $\left\{\psi(\cdot - n)\right\}_{n\in\Lambda}\cup F$ constitutes a Parseval frame of $L^2(\R^2)$, and $\left(M_k,M_k,\Lambda\to\Gamma \right)_{k=1,2}$ is a 2-band PR filter bank, where $|\Lambda/\Gamma|=2$. Define $\psi^1$ and $\psi^2$ by
  \begin{align}
    \widehat{\psi^1}(\xi) = M_1(\xi)\widehat{\psi}(\xi), \quad \text{and}~~ \widehat{\psi^2}(\xi) = M_2(\xi)\widehat{\psi}(\xi).
  \end{align}
Then $\left\{\psi^1(\cdot-n)\right\}_{n\in \Gamma}\cup\left\{\psi^2(\cdot-n)\right\}_{n\in \Gamma} \cup F$ is also a Parseval frame of $L^2(\R^2)$.
\end{lemma}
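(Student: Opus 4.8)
The plan is to reduce the statement to a single energy identity and then verify that identity by a frequency-domain computation driven by the two-band perfect-reconstruction conditions. Since the collection $F$ is left untouched and $\left\{\psi(\cdot-n)\right\}_{n\in\Lambda}\cup F$ is already a Parseval frame, for every $f\in L^2(\R^2)$ we have $\|f\|^2=\sum_{n\in\Lambda}|\langle f,\psi(\cdot-n)\rangle|^2+\sum_{g\in F}|\langle f,g\rangle|^2$. Thus it suffices to prove, for all $f\in L^2(\R^2)$, the purely filter-bank identity
\begin{align}
  \label{eq:cutfr-energy}
  \sum_{n\in\Lambda}\left|\langle f,\psi(\cdot-n)\rangle\right|^2=\sum_{k=1,2}\sum_{n\in\Gamma}\left|\langle f,\psi^k(\cdot-n)\rangle\right|^2 .
\end{align}
Adding $\sum_{g\in F}|\langle f,g\rangle|^2$ to both sides of \eqref{eq:cutfr-energy} then reproduces the Parseval identity for the new system. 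Note that \eqref{eq:cutfr-energy} makes no orthonormality assumption on $\left\{\psi(\cdot-n)\right\}_{n\in\Lambda}$; this is precisely why the frame version subsumes the basis version in Lemma~\ref{lemma:cutting_basis} (which is the special case $F=\emptyset$ together with $\left\{\psi(\cdot-n)\right\}$ an orthonormal basis of $V$).

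To prove \eqref{eq:cutfr-energy} I would pass to the Fourier domain and, for each lattice $L\in\{\Lambda,\Gamma\}$ and each generator $v$, use Plancherel on the torus $\R^2/L^*$ to write
\begin{align*}
  \sum_{n\in L}\left|\langle f,v(\cdot-n)\rangle\right|^2=c_L\int_{\R^2/L^*}\left|B_L^{v}(\xi)\right|^2\,\d{\xi},\qquad B_L^{v}(\xi)=\sum_{\gamma\in L^*}\widehat{f}(\xi+\gamma)\,\overline{\widehat{v}(\xi+\gamma)},
\end{align*}
where $c_L$ is a normalization constant depending only on $\det(L)$ and $B_L^{v}$ is the $L^*$-periodic bracket. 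Because $\Gamma\subset\Lambda$ with $|\Lambda/\Gamma|=2$, we have $\Lambda^*\subset\Gamma^*$ with index two; writing $\Gamma^*=\Lambda^*\sqcup(\Lambda^*+t)$ for a coset representative $t$ and using the $\Lambda^*$-periodicity of $M_k$, the relation $\widehat{\psi^k}=M_k\widehat{\psi}$ gives
\begin{align*}
  B_\Gamma^{\psi^k}(\xi)=\overline{M_k(\xi)}\,B_\Lambda^{\psi}(\xi)+\overline{M_k(\xi+t)}\,B_\Lambda^{\psi}(\xi+t).
\end{align*}

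The decisive step is to expand $\sum_{k=1,2}|B_\Gamma^{\psi^k}|^2$ and invoke the two-band perfect-reconstruction conditions. Specializing Theorem~\ref{thm:PR} to $(M_k,M_k,\Lambda\to\Gamma)_{k=1,2}$ (here each $\Gamma_k=\Gamma$, so $\Gamma_k/\Gamma$ is trivial and the phase factors $e^{i\langle\eta_k,\cdot\rangle}$ disappear) yields $M(\xi)^*M(\xi)=2\,Id_2$, i.e.
\begin{align*}
  \sum_{k=1,2}\left|M_k(\xi)\right|^2=2,\qquad \sum_{k=1,2}\overline{M_k(\xi)}\,M_k(\xi+t)=0 .
\end{align*}
The first identity, applied at $\xi$ and at $\xi+t$, collapses the diagonal terms, while the second kills the cross terms, leaving $\sum_{k=1,2}|B_\Gamma^{\psi^k}(\xi)|^2=2\bigl(|B_\Lambda^{\psi}(\xi)|^2+|B_\Lambda^{\psi}(\xi+t)|^2\bigr)$. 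Folding the integral over $\R^2/\Gamma^*$ together with its $t$-shift back onto a fundamental domain of $\Lambda^*$ (using that $B_\Lambda^{\psi}$ is $\Lambda^*$-periodic), and using $c_\Lambda=2c_\Gamma$, which follows from $\det\Gamma=2\det\Lambda$, reproduces exactly $c_\Lambda\int_{\R^2/\Lambda^*}|B_\Lambda^{\psi}|^2$, the left-hand side of \eqref{eq:cutfr-energy}.

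The only delicate points are bookkeeping rather than conceptual: tracking the normalization constants $c_L$ and the coset folding, and reading off the two scalar PR relations from the matrix identity in Theorem~\ref{thm:PR}. An equivalent and slightly shorter route avoids the explicit periodizations altogether: the map sending the sequence $c[m]=\langle f,\psi(\cdot-m)\rangle$, $m\in\Lambda$, to the two subsampled outputs $\bigl(\langle f,\psi^k(\cdot-n)\rangle\bigr)_{n\in\Gamma}$, $k=1,2$, is precisely the analysis operator of the filter bank $(M_k,M_k,\Lambda\to\Gamma)_{k=1,2}$; perfect reconstruction together with critical sampling makes this operator an $\ell^2$-isometry, and \eqref{eq:cutfr-energy} is its norm-preservation applied to $c$.
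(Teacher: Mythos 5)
The paper states this lemma without proof (it is presented, together with Lemma~\ref{lemma:cutting_basis}, as a 2D generalization of the cutting lemma in \cite{daubechies1992ten}), so there is no in-paper argument to compare against; judged on its own, your proof is correct and is exactly the argument the paper implicitly relies on. The reduction to the single energy identity is the right move (and you correctly observe it needs no orthonormality of the $\Lambda$-translates of $\psi$, which is what makes the frame version work), the bracket-product relation $B_\Gamma^{\psi^k}(\xi)=\overline{M_k(\xi)}\,B_\Lambda^{\psi}(\xi)+\overline{M_k(\xi+t)}\,B_\Lambda^{\psi}(\xi+t)$ is right, and the two scalar relations $\sum_k|M_k(\xi)|^2=2$ and $\sum_k\overline{M_k(\xi)}M_k(\xi+t)=0$ are precisely the entries of $M(\xi)^*M(\xi)=2\,Id_2$ from Theorem~\ref{thm:PR} specialized to $\Gamma_1=\Gamma_2=\Gamma$; the normalization $c_\Lambda=2c_\Gamma$ and the coset folding also check out. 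The only point worth making explicit in a written version is that the Plancherel/periodization step should first be carried out for $f$ in a dense class (say $\widehat{f}$ bounded with compact support, so that $B_\Lambda^{\psi}\in L^2(\R^2/\Lambda^*)$ and all sums are finite) and then extended by density; your closing remark identifying the computation with the $\ell^2$-isometry of the critically sampled paraunitary analysis operator is a clean equivalent formulation of the same fact.
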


If the wavelet function $\psi$ satisfies $\supp\left(\widehat{\psi}\right)\approx A\subset \R^2$, and $M_1,M_2\in L^2(\R^2/\Lambda^*)$ are supported mainly on $\B_1$ and $\B_2$ respectively, then Lemma~\ref{lemma:cutting_basis} and~\ref{lemma:cutting_frame} suggest that we can ``cut'' the wavelet function $\psi$ in the frequency domain by $\widehat{\psi^k}(\xi)=M_k(\xi)\widehat{\psi}(\xi)$ such that $\supp\left(\widehat{\psi^k}\right)\approx A\cap \B_k$, and the $\Gamma$-translates of $\psi^1$ and $\psi^2$ (together with other unchanged functions) still constitute an orthonormal basis or a Parseval frame of $L^2(\R^2)$. In what follows, we discuss, without loss of generality, how to cut the wavelet functions constructed in Section~\ref{sec:basis_and_frame} whose Fourier transforms are supported mainly on $A_1^2$ (see Figure~\ref{fig:cutting-demo}.)

\begin{figure}
    \centering
    \begin{subfigure}[t]{0.23\textwidth}
        \includegraphics[width=\textwidth]{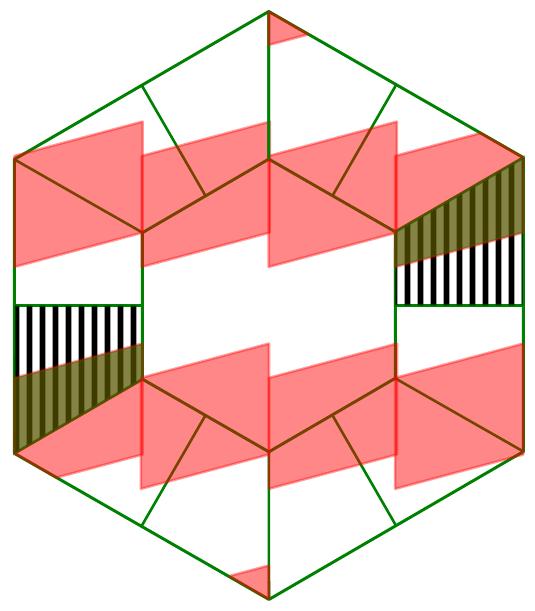}
        \caption{\small $\chi_{\B_{1,1}}\in L^2(\R^2/\Gamma_1^{2*})$}
        \label{fig:cutting-basis-1}
    \end{subfigure}
     ~
    \begin{subfigure}[t]{0.23\textwidth}
        \includegraphics[width=\textwidth]{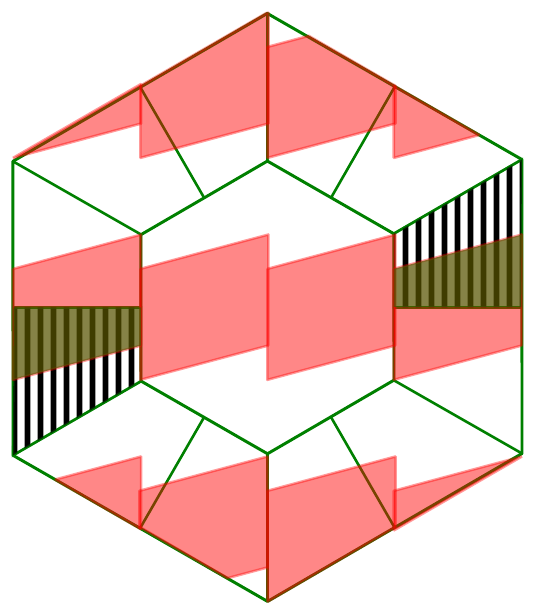}
        \caption{\small $\chi_{\B_{1,2}}\in L^2(\R^2/\Gamma_1^{2*})$}
        \label{fig:cutting-basis-2}
    \end{subfigure}
     ~
    \begin{subfigure}[t]{0.23\textwidth}
        \includegraphics[width=\textwidth]{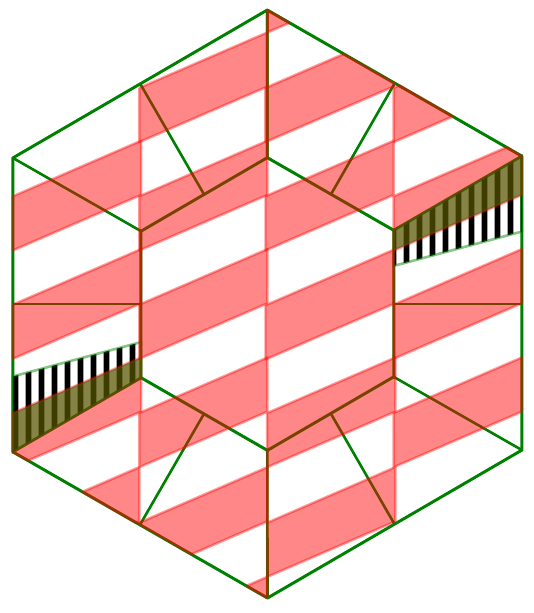}
        \caption{\small $\chi_{\B_{2,1}}\in L^2(\R^2/\Gamma_1^{4*})$}
        \label{fig:cutting-basis-3}
    \end{subfigure}
     ~
    \begin{subfigure}[t]{0.23\textwidth}
        \includegraphics[width=\textwidth]{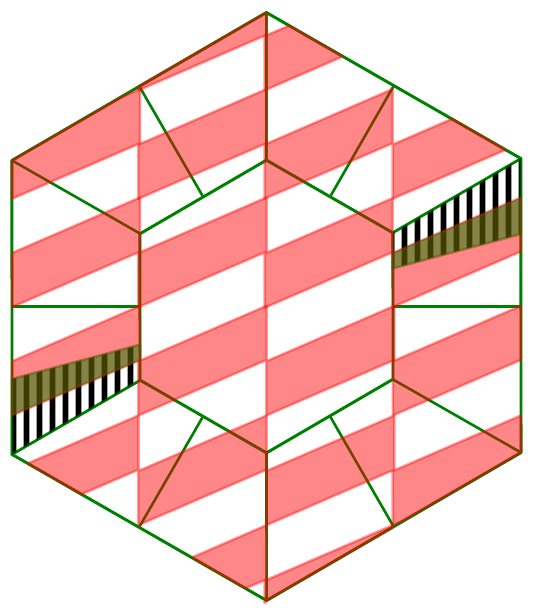}
        \caption{\small $\chi_{\B_{2,2}}\in L^2(\R^2/\Gamma_1^{4*})$}
        \label{fig:cutting-basis-4}
    \end{subfigure}
    \begin{subfigure}[t]{0.23\textwidth}
        \includegraphics[width=\textwidth]{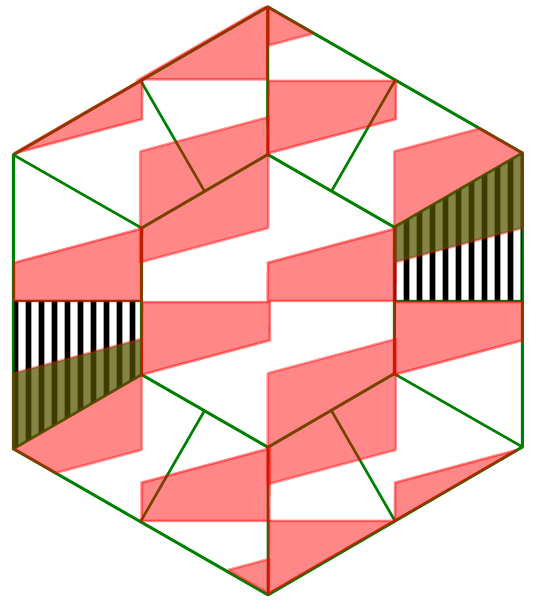}
        \caption{\small $\chi_{\B_{1,1}^{\text{fr}}}\in L^2(\R^2/\Gamma_1^{\text{fr},2*})$}
        \label{fig:cutting-frame-1}
    \end{subfigure}
     ~
    \begin{subfigure}[t]{0.23\textwidth}
        \includegraphics[width=\textwidth]{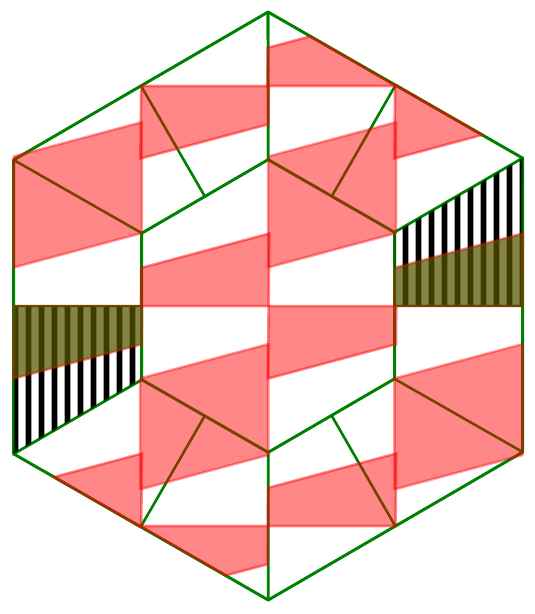}
        \caption{\small $\chi_{\B_{1,2}^{\text{fr}}}\in L^2(\R^2/\Gamma_1^{\text{fr},2*})$}
        \label{fig:cutting-frame-2}
    \end{subfigure}
     ~
    \begin{subfigure}[t]{0.23\textwidth}
        \includegraphics[width=\textwidth]{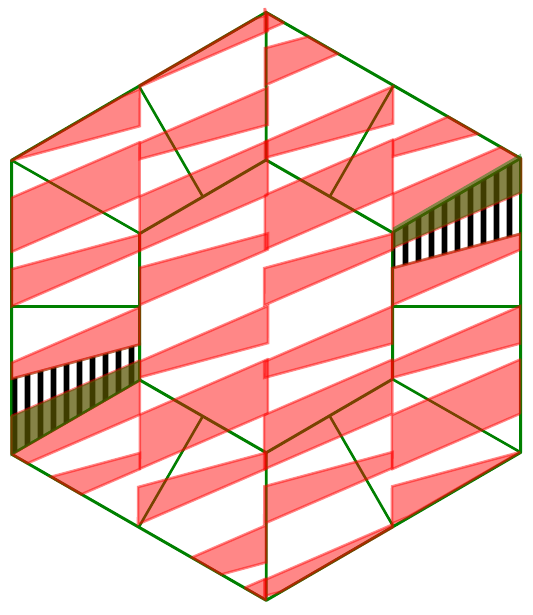}
        \caption{\small $\chi_{\B_{2,1}^{\text{fr}}}\in L^2(\R^2/\Gamma_1^{\text{fr},4*})$}
        \label{fig:cutting-frame-3}
    \end{subfigure}
     ~
    \begin{subfigure}[t]{0.23\textwidth}
        \includegraphics[width=\textwidth]{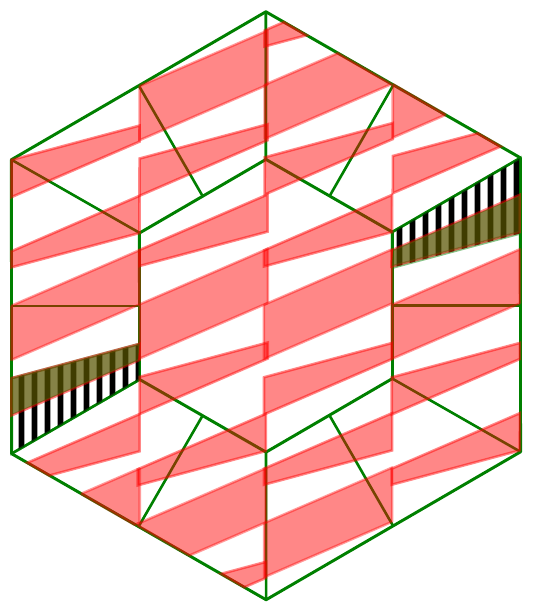}
        \caption{\small $\chi_{\B_{2,2}^{\text{fr}}}\in L^2(\R^2/\Gamma_1^{\text{fr},4*})$}
        \label{fig:cutting-frame-4}
    \end{subfigure}
    \caption{ \small The characteristic functions $\chi_{\B_k}$ which after smoothing are used to cut the wavelet basis/frame functions. Red: the essential support of the filters $\B_k$. Striped: the essential support of the original wavelet functions. Green: the essential support of the wavelet functions after the cutting.}
\end{figure}

Let $\psi^1$ be the  wavelet basis function obtained in Section~\ref{sec:basis} after regular boundary smoothing. Thus $\supp\left(\widehat{\psi^1_{1}}\right)\approx A_1^2$, and $\left\{\psi_{1}^1(\cdot - n)=\psi^1_{1,n} \right\}_{n\in \Gamma_1^{2}}$ constitutes an orthonormal basis for  $V_1 = \widebar{\Span}\left\{\psi_{1,n}^1 \right\}_{n\in \Gamma_1^{2}}$. Let $\B_{1,1}$ and $\B_{1,2}$ be the sets colored in red in Figure~\ref{fig:cutting-basis-1} and~\ref{fig:cutting-basis-2}, then $\chi_{\B_{1,1}}, \chi_{\B_{1,2}} \in L^2(\R^2/\Gamma_1^{2*})$, and $\chi_{\B_{1,2}}(\xi) = \chi_{\B_{1,1}}(\xi+\gamma_1)$, where $\gamma_1 = (0,\sqrt{3}\pi/4)\in \Gamma_1^{4*}/\Gamma_1^{2*}=\Gamma_2^{4*}/\Gamma_1^{2*}$. Define
\begin{align}
  \label{eq:molify}
  M_1^1(\xi) = \frac{\sqrt{2}g_\epsilon*\chi_{\B_{1,1}}(\xi)}{\left(\left|g_\epsilon*\chi_{\B_{1,1}}(\xi)\right|^2+ \left|g_\epsilon*\chi_{\B_{1,2}}(\xi)\right|^2\right)^{1/2}}, \quad M_2^1(\xi) = M_1^1(\xi+\gamma_1)e^{i\left<\xi , \eta_1\right>},
\end{align}
where $\eta_1 = (0,4/\sqrt{3})$, and $g_\epsilon\in C_c^\infty(\R^2)$ is a smooth function supported on a Euclidean ball with radius $\epsilon$. One can easily check that $(M^1_k,M^1_k,\Gamma_1^2\to\Gamma_k^4)_{k=1,2}$ is a PR filter bank, and $M^1_k\in C^\infty(\R^2)\cap L^2(\R^2/\Gamma_1^{2*})$ is supported mainly on $\B_{1,k}$. Thus $\widehat{\psi_{1}^{1,k}}(\xi)\coloneqq M_k^1(\xi)\widehat{\psi_{1}^1}(\xi)$ is supported primarily on $\B_{1,k}\cap A_1^2 = A^4_k$, i.e., the regions in color green in Figure~\ref{fig:cutting-basis-1} and~\ref{fig:cutting-basis-2}. Moreover, by Lemma~\ref{lemma:cutting_basis}, the $\Gamma^4_k$-translates of $\psi_{1}^{1,k}$,
\begin{align}
  \left\{\psi_{1}^{1,1}(\cdot-n)=\psi_{1,n}^{1,1} \right\}_{n\in \Gamma^4_1} \cup \left\{\psi_{1}^{1,2}(\cdot-n)=\psi_{1,n}^{1,2} \right\}_{n\in \Gamma^4_2},
\end{align}
constitute an orthonormal basis of $V_1 = \widebar{\Span}\left\{\psi_{1,n}^1 \right\}_{n\in \Gamma_1^{2}}$. Figure~\ref{fig:lvl1_ON1_idx1} and~\ref{fig:lvl1_ON1_idx2} show $\psi_{1}^{1,1}$ and  $\psi_{1}^{1,2}$ in the spatial and frequency domain obtained from cutting the basis function $\psi_{1}^{1}$. In order to cut $\psi_{1}^{1,1}$ further to obtain basis functions with Fourier transforms  supported mainly on $A_1^8$ and $A_2^8$ respectively (see Figure~\ref{fig:cutting-demo}), we can use the 2-band filters $M^2_1, M^2_2\in C^\infty(\R^2)\cap L^2(\R^2/\Gamma_1^{4*})$ constructed from $\chi_{\B_{2,1}}, \chi_{\B_{2,2}}$ (Figure~\ref{fig:cutting-basis-3} and~\ref{fig:cutting-basis-4}) after similar smoothing as in \eqref{eq:molify}.

\begin{figure}
    \centering
    \begin{subfigure}[t]{0.15\textwidth}
        \includegraphics[width=\textwidth]{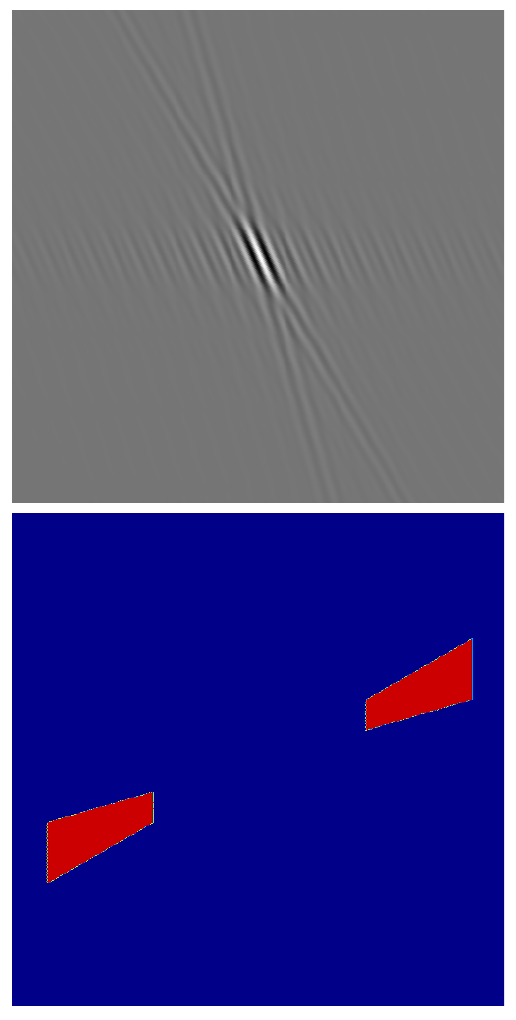}
        \caption{}
        \label{fig:lvl1_shannon_idx1}
    \end{subfigure}
     ~
    \begin{subfigure}[t]{0.15\textwidth}
        \includegraphics[width=\textwidth]{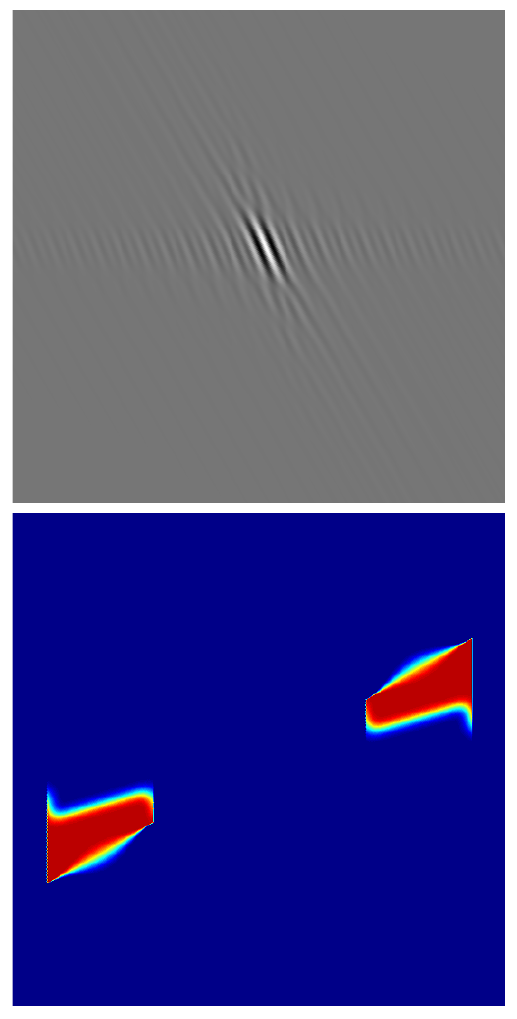}
        \caption{}
        \label{fig:lvl1_ON1_idx1}
    \end{subfigure}
     ~
    \begin{subfigure}[t]{0.15\textwidth}
        \includegraphics[width=\textwidth]{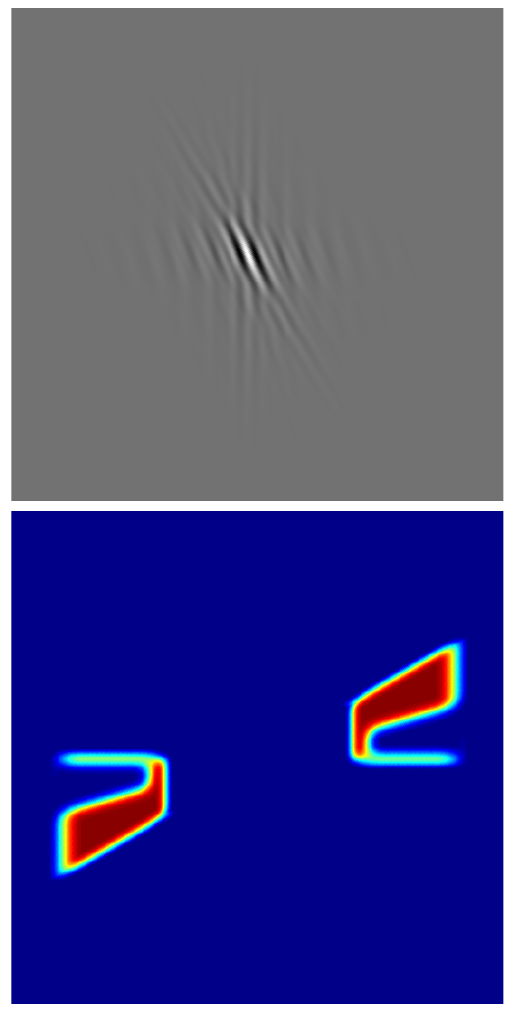}
        \caption{}
        \label{fig:lvl1_frame_idx1}
    \end{subfigure}
     ~
    \begin{subfigure}[t]{0.15\textwidth}
        \includegraphics[width=\textwidth]{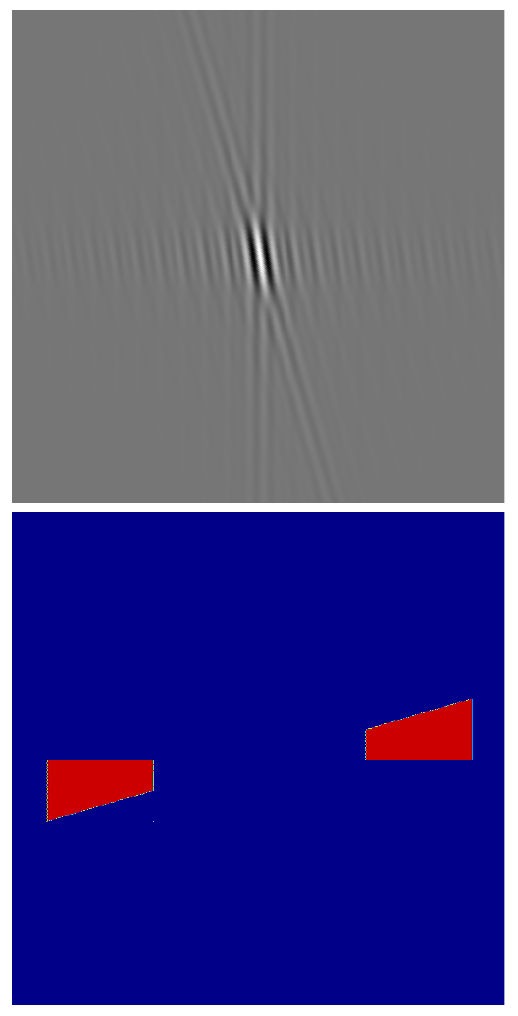}
        \caption{}
        \label{fig:lvl1_shannon_idx2}
    \end{subfigure}
     ~
    \begin{subfigure}[t]{0.15\textwidth}
        \includegraphics[width=\textwidth]{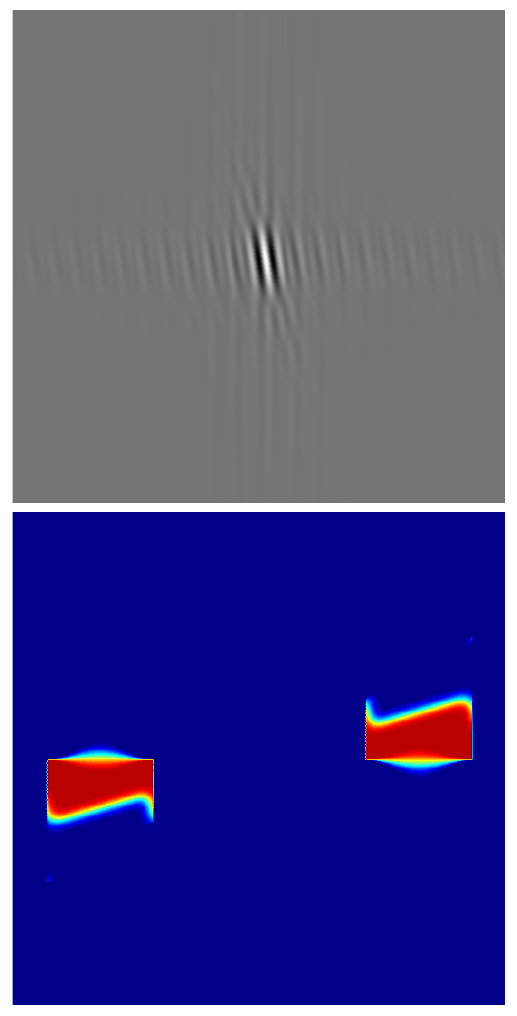}
        \caption{}
        \label{fig:lvl1_ON1_idx2}
    \end{subfigure}
     ~
    \begin{subfigure}[t]{0.15\textwidth}
        \includegraphics[width=\textwidth]{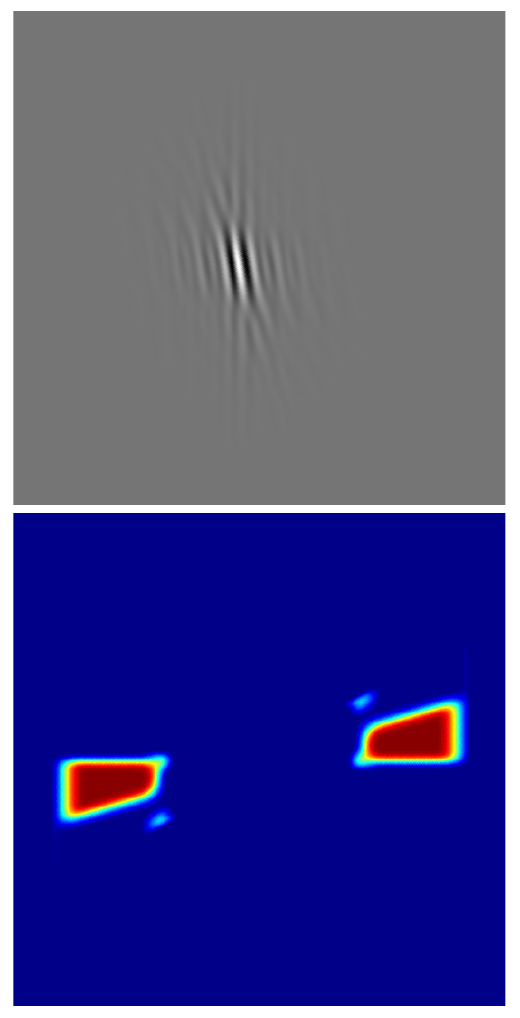}
        \caption{}
        \label{fig:lvl1_frame_idx2}
    \end{subfigure}
    \caption{ \small Wavelet functions $\psi_{1}^{1,1}$ and $\psi_{1}^{1,2}$ obtained from the cutting lemma. First row: $\psi_{1}^{1,k}$ in the time domain. Second row: $\widehat{\psi_{1}^{1,k}}$ in the frequency domain supported mainly on $A^4_{k}$. \textbf{(a)(d)}: Shannon wavelets. \textbf{(b)(e)}: $\psi_{1}^{1,1}$ and $\psi_{1}^{1,2}$ obtained from cutting the wavelet basis function $\psi^{1}_{1}$ constructed in Section~\ref{sec:basis}. \textbf{(c)(f)}: $\psi_{\text{fr},1}^{1,1}$ and $\psi_{\text{fr},1}^{1,2}$ obtained from cutting the wavelet frame function $\psi_{\text{fr},1}^{1}$ constructed in Section~\ref{sec:frame}.}
\end{figure}

In order to subdivide the wavelet frame functions $\widehat{\psi^k_{\text{fr}}}$ constructed in Section~\ref{sec:frame}, we need to modify the downsampling lattices and the corresponding 2-band filters . Given $p\ge 2$ and $1\le k\le 3p$, define
\begin{align}
\label{eq:hex_lattice_frame}
  \Gamma_k^{\text{fr},p} = \left\{
  \begin{aligned}
    &
    \begin{bmatrix}
      2(p-1) & 4 \\
      -\frac{2}{\sqrt{3}}(p-1) & 0
    \end{bmatrix}\Z^2, \quad &&1\le k \le p.\\
    &
    \begin{bmatrix}
      0 & -2\\
      \frac{4}{\sqrt{3}}(p-1) & 2\sqrt{3}
    \end{bmatrix}\Z^2, \quad &&p+1\le k \le 2p.\\
    &
    \begin{bmatrix}
      2(p-1) & 2\\
      \frac{2}{\sqrt{3}}(p-1) & 2\sqrt{3}
    \end{bmatrix}\Z^2, \quad &&2p+1 \le k\le 3p.\\
  \end{aligned}\right.
\end{align}
In particular, $\Gamma_k^{\text{fr},2}$ is the same as $\Gamma_k^{\text{fr}}$ defined in \eqref{eq:gamma_frame}, and $\left\{\psi_{\text{fr},j,n}^k \right\}_{1\le k\le 6, j\in \Z, n\in 2^{j-1}\Gamma_k^{\text{fr},2}}$ constitutes a Parseval frame of $L^2(\R^2)$. The wavelet function $\psi^1_{\text{fr},1}$, whose Fourier transform $\widehat{\psi^1_{\text{fr},1}}$ is supported on an $\epsilon$-neighborhood of $A_1^2$, can be cut by the 2-band filter bank $\left(M_{\text{fr},k}^1,M_{\text{fr},k}^1, \Gamma_1^{\text{fr},2}\to \Gamma_k^{\text{fr},4}\right)_{k=1,2}$ supported mainly on $\B_{1,1}^{\text{fr}}$ and $\B_{1,2}^{\text{fr}}$ (see Figure~\ref{fig:cutting-frame-1} and~\ref{fig:cutting-frame-2}). More specifically,
\begin{align}
  \label{eq:molify-frame}
  M_{\text{fr},1}^1(\xi) = \frac{\sqrt{2}g_\epsilon*\chi_{\B_{1,1}^{\text{fr}}}(\xi)}{\left(\left|g_\epsilon*\chi_{\B_{1,1}^{\text{fr}}}(\xi)\right|^2+ \left|g_\epsilon*\chi_{\B_{1,2}^{\text{fr}}}(\xi)\right|^2\right)^{1/2}}, \quad M_{\text{fr},2}^1(\xi) = M_{\text{fr},1}^1(\xi+\gamma_1^{\text{fr}})e^{i\left<\xi , \eta^{\text{fr}}_1\right>},
\end{align}
where $\gamma_1^{\text{fr}} = (0,\sqrt{3}\pi/2)$, and $\eta_1^{\text{fr}} = (2,2/\sqrt{3})$. By Lemma~\ref{lemma:cutting_frame},
\begin{align*}
  \left\{ \psi^{1,1}_{\text{fr},1,n}\right\}_{n\in \Gamma_1^{\text{fr},4}} \cup \left\{ \psi^{1,2}_{\text{fr},1,n}\right\}_{n\in \Gamma_2^{\text{fr},4}} \cup \left\{\psi_{\text{fr},j,n}^k \right\}_{(k,j)\neq (1,1), n\in 2^{j-1}\Gamma_k^{\text{fr},2}}
\end{align*}
also constitutes a Parseval frame of $L^2(\R^2)$, where $\widehat{\psi^{1,k}_{\text{fr},1}} = M_{\text{fr},k}^1\widehat{\psi^1_{\text{fr},1}}$, $k=1, 2$, are shown in the spatial and frequency domain in Figure~\ref{fig:lvl1_frame_idx1} and~\ref{fig:lvl1_frame_idx2}. Again,  $\psi^{1,1}_{\text{fr},1}$ can be further subdivided  using the smooth 2-band filters supported mainly on $\B_{2,1}^{\text{fr}}$ and $\B_{2,2}^{\text{fr}}$ in Figure~\ref{fig:cutting-frame-3} and~\ref{fig:cutting-frame-4}.

We have thus, in this section, provided an alternative way of constructing wavelet systems with finer orientation selectivity satisfying the parabolic scaling law. Compared to the techniques in Section~\ref{sec:basis_and_frame}, the cutting lemma does not increase the frame redundancy to achieve more orientations, but it inevitably introduces aliasing due to the filter-smoothing \eqref{eq:molify}\eqref{eq:molify-frame}.

\section{Numerical experiments}
\label{sec:ex}

To illustrate the practical utility of the multidirectional hexagonal wavelet bases and low-redundancy frames constructed in Section~\ref{sec:basis_and_frame} and~\ref{sec:cutting}, we compare their performance to tensor wavelets  and  curvelets \cite{candes2006fast} in the task of high bit rate image compression.

In the experiment, the nonlinear approximation of the original image is obtained by keeping $N/20$ largest coefficients, where $N$ is the number of pixels in the original image, thus achieving a $20:1$ compression ratio. The tensor product of two compactly supported wavelets with 6 vanishing moments \cite{daubechies1988orthonormal} is used for the tensor wavelets, and the wrapping discrete curvelet transform\footnote{The online package is available at CurveLab \url{http://www.curvelet.org/}} is used for the experiments with curvelets. Three levels of decomposition (two levels with 6 directions, and another one with 12 directions) are conducted for the experiments with the proposed hexagonal wavelet bases and frames. Even though the input digital images are sampled on a square lattice, we treat them as sampled on a hexagonal lattice for the test with hexagonal wavelets, and thus the ``real'' input is the original image after a linear transformation. The peak signal-to-noise ratio (PSNR) defined by
\begin{align*}
  \text{PSNR} = 10\log_{10}\left( \frac{255^2N}{\|f-f_{c}\|_{2}^2}\right)
\end{align*}
is used to evaluate the quality of the compression, where $f,f_c$ are the original and the compressed image respectively.

\begin{figure}
  \centering
    \begin{subfigure}[t]{0.3\textwidth}
        \includegraphics[width=\textwidth]{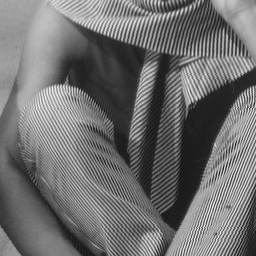}
        \caption{\small Original image.}
        \label{fig:barbara_original}
    \end{subfigure}
    ~
    \begin{subfigure}[t]{0.3\textwidth}
        \includegraphics[width=\textwidth]{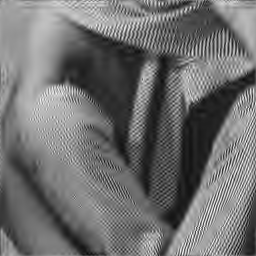}
        \caption{\small Tensor wavelets.}
        \label{fig:barbara_separable}
    \end{subfigure}
    ~
    \begin{subfigure}[t]{0.3\textwidth}
        \includegraphics[width=\textwidth]{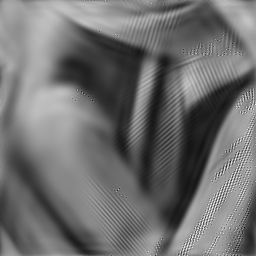}
        \caption{\small Curvelets.}
        \label{fig:barbara_curvelet}
    \end{subfigure}
    ~
    \begin{subfigure}[t]{0.3\textwidth}
        \includegraphics[width=\textwidth]{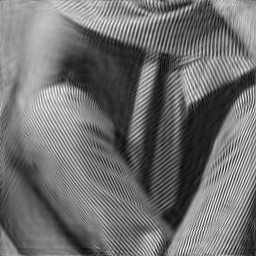}
        \caption{\small Hexagonal wavelet basis.}
        \label{fig:barbara_hex_basis}
    \end{subfigure}
    ~
    \begin{subfigure}[t]{0.3\textwidth}
        \includegraphics[width=\textwidth]{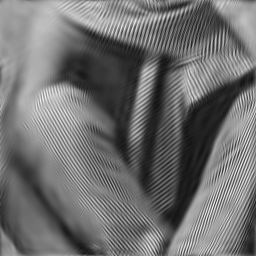}
        \caption{\small Hexagonal wavelet frame.}
        \label{fig:barbara_hex_frame}
    \end{subfigure}
  \caption{\small Compression results of the Barbara image with 20:1 compression ratio. \textbf{(a)} The original image. \textbf{(b)} Tensor wavelets (PSNR = 23.23 dB.) \textbf{(c)} Curvelets (PSNR = 21.14 dB.) \textbf{(d)} Proposed hexagonal wavelet basis (PSNR = 26.49 dB.) \textbf{(e)} Proposed hexagonal wavelet frame (PSNR = 24.59 dB.)}
  \label{fig:ex_barbara}
\end{figure}

\begin{figure}
  \centering
    \begin{subfigure}[t]{0.3\textwidth}
        \includegraphics[width=\textwidth]{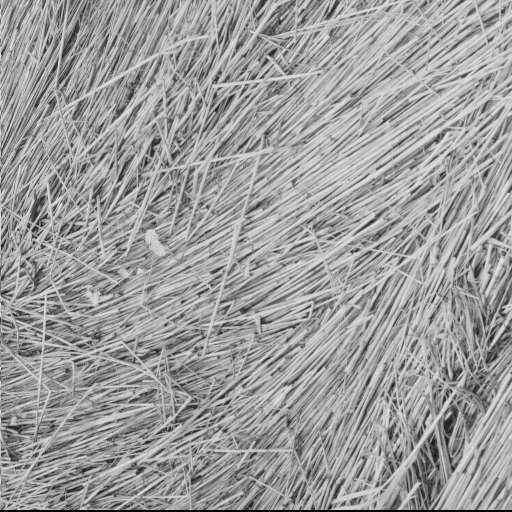}
        \caption{\small Original image.}
        \label{fig:straw_original}
    \end{subfigure}
    ~
    \begin{subfigure}[t]{0.3\textwidth}
        \includegraphics[width=\textwidth]{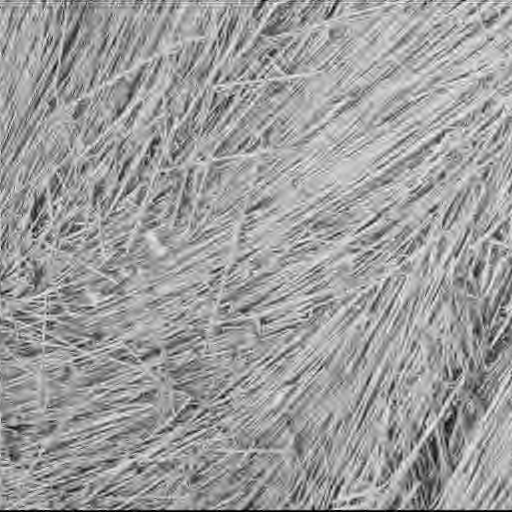}
        \caption{\small Tensor wavelets.}
        \label{fig:straw_separable}
    \end{subfigure}
    ~
    \begin{subfigure}[t]{0.3\textwidth}
        \includegraphics[width=\textwidth]{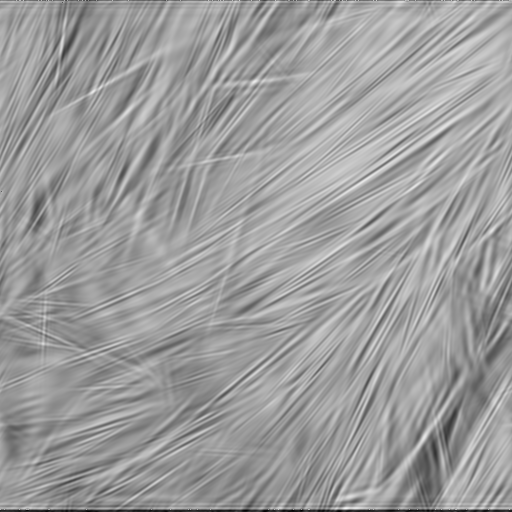}
        \caption{\small Curvelets.}
        \label{fig:straw_curvelet}
    \end{subfigure}
    ~
    \begin{subfigure}[t]{0.3\textwidth}
        \includegraphics[width=\textwidth]{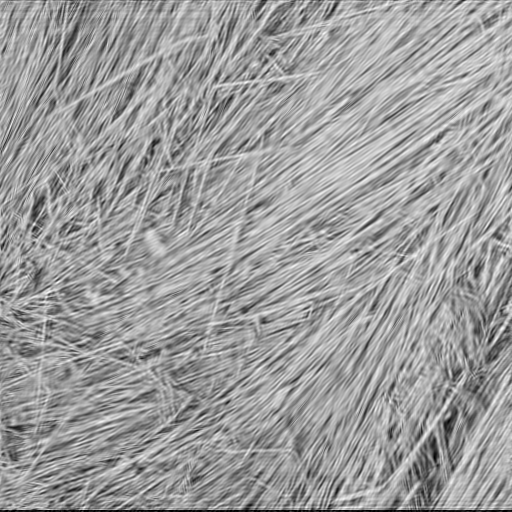}
        \caption{\small Hexagonal wavelet basis.}
        \label{fig:straw_hex_basis}
    \end{subfigure}
    ~
    \begin{subfigure}[t]{0.3\textwidth}
        \includegraphics[width=\textwidth]{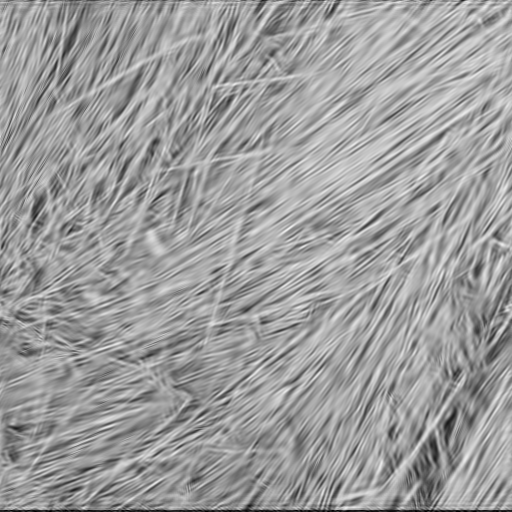}
        \caption{\small Hexagonal wavelet frame.}
        \label{fig:straw_hex_frame}
    \end{subfigure}
  \caption{\small Compression results of the straw image with 20:1 compression ratio. \textbf{(a)} The original image. \textbf{(b)} Tensor wavelets (PSNR = 21.79 dB.) \textbf{(c)} Curvelets (PSNR = 19.56 dB.) \textbf{(d)} Proposed hexagonal wavelet basis (PSNR = 23.31 dB.) \textbf{(e)} Proposed hexagonal wavelet frame (PSNR = 21.86 dB.)}
  \label{fig:ex_straw}
\end{figure}

Figure~\ref{fig:ex_barbara} and~\ref{fig:ex_straw} display the compression results for the Barbara and straw image. It is evident that the proposed multidirectional hexagonal wavelet systems are more efficient than tensor wavelets near edges and textures (notice that the proposed hexagonal frames achieve better compression results compared to tensor wavelets even though it is more redundant.) The over-complete curvelet transform, albeit having provably optimal approximation rate for piecewise smooth functions with jumps along curves, fails to deliver its potential in high bit rate image compression for reasonable-size images. We have not included the most recent implementation of the shearlet transform\footnote{The online package is available at ShearLab \url{http://www.shearlab.org/}} \cite{kutyniok2014shearlab} for it fails to achieve reasonable results with its extremely high redundancy (49 with 3 levels of decomposition.) 

\section{Conclusion}
\label{sec:conclusion}
We gave a detailed study of the restriction in building multidirectional wavelet bases corresponding to an admissible frequency partition in the framework of non-uniform directional filter bank. In particular, we constructed orientation-invariant and alias-free hexagonal wavelet orthonormal bases with optimal continuity in the frequency domain as well as low-redundancy frames with arbitrarily fast spatial decay. A 2D cutting lemma is used to subdivide the obtained wavelet systems in higher frequency rings to achieve the parabolic scaling law without increasing the redundancy.  Further applications of such wavelet systems, e.g., building local-orientation-invariant convolutional neural network in machine learning, will be studied in future work.

\appendix
\section{Proof of Proposition~\ref{prop:crit_PR}}
\label{apdx:proof_crit_PR}
  Since $(M_k, \widetilde{M}_k, \Lambda \to \Gamma_k)_{k=1}^K$ is PR and critically sampled, the matricies $M(\xi)$ and $\widetilde{M}(\xi)$ are square matrices, and \eqref{eq:PR} implies
  \begin{align*}
    \widetilde{M}(\xi)M(\xi)^*  = |\Lambda/\Gamma|Id_{|\Lambda/\Gamma|}, \quad a.e.~\xi \in \R^d.
  \end{align*}
Therefore, for $a.e. ~\xi \in \R^d$ and $\forall k\in [K]$, we have
\begin{align*}
  \sum_{\gamma\in\Gamma^*/\Lambda^*}e^{\left<\eta,\xi+\gamma\right>}\widetilde{M}_k(\xi+\gamma)\overline{M_k}(\xi+\gamma) = |\Lambda/\Gamma|\delta_{\eta,0}, ~\forall \eta \in \Gamma_k/\Gamma.
\end{align*}
This implies
\begin{align*}
  |\Lambda/\Gamma| \delta_{\eta,0} & = \sum_{s\in \Gamma^*/\Gamma_k^*}\sum_{t\in\Gamma_k^*/\Lambda^*}e^{i\left<\eta,s+t\right>}\widetilde{M}_k(\xi+s+t)\overline{M_k}(\xi+s+t)\\
  & = \sum_{s\in\Gamma^*/\Gamma_k^*}e^{i\left<\eta,s\right>}\sum_{t\in\Gamma_k^*/\Lambda^*}\widetilde{M}_k(\xi+s+t)\overline{M_k}(\xi+s+t)\\
  & = \sum_{s\in\Gamma^*/\Gamma_k^*}e^{i\left<\eta,s\right>}g(\xi,s), \quad \forall \eta \in \Gamma_k/\Gamma,
\end{align*}
where $g(\xi,s)\coloneqq \sum_{t\in\Gamma_k^*/\Lambda^*}\widetilde{M}_k(\xi+s+t)\overline{M_k}(\xi+s+t)$. Summing over all $\eta \in \Gamma_k/\Gamma$, we have
\begin{align*}
  |\Lambda/\Gamma| & = \sum_{s\in\Gamma^*/\Gamma_k^*}g(\xi,s)\sum_{\eta\in\Gamma_k/\Gamma}e^{i\left<\eta,s\right>} = \sum_{s\in\Gamma^*/\Gamma_k^*}g(\xi,s)\delta_{s,0}|\Gamma_k/\Gamma|\\
  & = |\Gamma_k/\Gamma| g(\xi,0) = |\Gamma_k/\Gamma|\sum_{t\in\Gamma_k^*/\Lambda^*}\widetilde{M}_k(\xi+t)\overline{M_k}(\xi+t).
\end{align*}
Hence we have
\begin{align*}
\sum_{t \in \Gamma_k^*/\Lambda^*}\widetilde{M}_k(\xi+t)\overline{M_k}(\xi+t) = |\Lambda/\Gamma|/ |\Gamma_k/\Gamma| = |\Lambda/\Gamma_k|.  
\end{align*}

\section{Proof of Proposition~\ref{prop:PR_hex}}
\label{apdx:prop_PR_hex}
According to Theorem~\ref{thm:PR}, it suffices to show \eqref{eq:PR_hex_identity_sum} and \eqref{eq:PR_hex_shift_cancel} hold if and only if
  \begin{align}
    \label{eq:PR_equation}
    \sum_{k=0}^6S_{k,\gamma} \coloneqq  \sum_{k=0}^6\sum_{\eta_k\in\Gamma_k/\Gamma}e^{-i\left<\eta_k,\gamma\right>}\overline{M_k(\xi+\gamma)}M_k(\xi)  = |\Lambda/\Gamma|\delta_{\gamma,0}, \quad \forall \gamma\in \Gamma^*/\Lambda^*.
  \end{align}
We  verify this for $\gamma$ in each segment of the Venn diagram in Fig~\ref{fig:venn}.
\begin{enumerate}
\item If $\gamma\in\Lambda^*$, i.e., $\gamma = 0\in\Gamma^*/\Lambda^*$, we have
  \begin{align*}
    \sum_{k=0}^6\sum_{\eta_k\in\Gamma_k/\Gamma}\overline{M_k(\xi)}M_k(\xi) = |\Lambda/\Gamma| \iff \sum_{k=0}^6|m_k(\xi)|^2 = \sum_{k=0}^6\frac{1}{|\Lambda/\Gamma_k|}|M_k(\xi)|^2 = 1.
  \end{align*}
\item If $\gamma\in\Gamma_0^*\setminus\Lambda^*$, then $\left<\gamma,\eta_k\right>\in 2\pi\Z, ~\forall \eta_k\in\Gamma_k\subset \Gamma_0$. Therefore,
  \begin{align*}
    &0 = \sum_{k=0}^6\sum_{\eta_k\in\Gamma_k/\Gamma}e^{-i\left<\eta_k,\gamma\right>}\overline{M_k(\xi+\gamma)}M_k(\xi)= \sum_{k=0}^6|\Gamma_k/\Gamma|\overline{M_k(\xi+\gamma)}M_k(\xi)\\
    \iff &\sum_{k=0}^6\overline{m_k(\xi+\gamma)}m_k(\xi) = 0.
  \end{align*}
\item If $\gamma\in\Gamma_{1,2}^*\setminus \Gamma_0^*$, then each individual $S_{k,\gamma}$ can be computed as follows
  \begin{itemize}
  \item For $k=0$: $\gamma \in \Gamma^*\setminus \Gamma_0^* \implies \sum_{\eta_0\in\Gamma_0/\Gamma}e^{-i\left<\eta_0,\gamma\right>} = 0 \implies S_{0,\gamma} = 0.$
  \item For $k \in \{3,4,5,6\}$: $\gamma \in\Gamma^*\setminus\Gamma_k \implies \sum_{\eta_k\in\Gamma_k/\Gamma}e^{-i\left<\eta_k,\gamma\right>} = 0 \implies S_{k,\gamma} = 0$.
  \item For $k \in \{1,2\}$: $\gamma \in \Gamma_k^*  \implies \sum_{\eta_k\in\Gamma_k/\Gamma}e^{-i\left<\eta_k,\gamma\right>} = |\Gamma_k/\Gamma|\implies S_{k,\gamma} = |\Gamma_k/\Gamma|\overline{M_k(\xi+\gamma)}M_k(\xi).$
  \end{itemize}
Hence we have
\begin{align*}
  0=\sum_{k=0}^6S_{k,r} = \sum_{k=1}^2|\Gamma_k/\Gamma|\overline{M_k(\xi+\gamma)}M_k(\xi) \iff \sum_{k=1}^2\overline{m_k(\xi+\gamma)}m_k(\xi) = 0.
\end{align*}
\item The last two equations of \eqref{eq:PR_hex_shift_cancel} can be proved similarly as the previous case.
\end{enumerate}

\bibliographystyle{abbrv}
\bibliography{ref}

\end{document}